\crefname{assumption}{Assumption}{Assumptions}
\crefname{step}{Step}{Steps}
\crefname{hyp}{Hypothesis}{Hypotheses}
\newcommand{\R}{\mathbb{R}}
\newcommand{\N}{\mathbb{N}}
\newcommand{\X}{\mathcal{X}}
\newcommand{\Y}{\mathcal{Y}}
\newcommand{\C}{\mathcal{C}}
\newcommand{\M}{\mathcal{M}}
\DeclareMathOperator{\E}{\mathbb{E}}
\newcommand{\eps}{\varepsilon}
\renewcommand{\epsilon}{\varepsilon}
\DeclareMathOperator*{\esssup}{ess\,sup}
\DeclareMathOperator*{\essinf}{ess\,inf}
\DeclareMathOperator*{\argmin}{arg\,min}
\DeclareMathOperator{\dist}{dist}
\newcommand{\norm}[1]{\left\|#1\right\|}
\newcommand{\abs}[1]{\left|#1\right|}
\newcommand{\st}{\,:\,}
\newcommand{\de}{\,\mathrm{d}}
\DeclareMathOperator{\supp}{supp}
\newtheorem{theorem}{Theorem}[section]
\newtheorem*{theorem*}{Theorem}
\newtheorem{proposition}[theorem]{Proposition}
\newtheorem{lemma}[theorem]{Lemma}
\newtheorem{corollary}[theorem]{Corollary}
\newtheorem*{corollary*}{Corollary}
\theoremstyle{definition}
\newtheorem{example}{Example}
\theoremstyle{remark}
\newtheorem{remark}[theorem]{Remark}
\newcommand{\ksr}[1]{{\color{red}#1}}
\renewcommand{\ksr}[1]{#1}
\newcommand{\comment}[1]{}
\numberwithin{equation}{section}
\newcommand{\grad}{\nabla}
\renewcommand{\P}{\mathcal{P}}
\renewcommand{\L}{\mathcal{L}}
\renewcommand{\H}{\mathcal{H}}
\DeclareMathOperator{\Per}{Per}
\DeclareMathOperator{\TV}{TV}
\def\Xint#1{\mathchoice
{\XXint\displaystyle\textstyle{#1}}%
{\XXint\textstyle\scriptstyle{#1}}%
{\XXint\scriptstyle\scriptscriptstyle{#1}}%
{\XXint\scriptscriptstyle\scriptscriptstyle{#1}}%
\!\int}
\def\XXint#1#2#3{{\setbox0=\hbox{$#1{#2#3}{\int}$ }
\vcenter{\hbox{$#2#3$ }}\kern-.6\wd0}}
\def\intbar{\Xint-}
\title{Gamma-convergence of a nonlocal perimeter\\ arising in adversarial machine learning}
\author{
Leon Bungert
\thanks{Institute of Mathematics, Center of Artifical Intelligence and Data Science (CAIDAS), University of Würzburg, Emil-Fischer-Str. 40, 97074 Würzburg, Germany. \href{mailto:leon.bungert@uni-wuerzburg.de}{leon.bungert@uni-wuerzburg.de}} 
\and 
Kerrek Stinson
\thanks{Hausdorff Center for Mathematics, University of Bonn, Endenicher Allee 62, Villa Maria, 53115 Bonn, Germany. \href{mailto:kstinson@uni-bonn.de}{kstinson@uni-bonn.de}}
}
\date{\today}
\let\blx@rerun@biber\relax
\begin{document}

\maketitle

\begin{abstract}
In this paper we prove Gamma-convergence of a nonlocal perimeter of Minkowski type to a local anisotropic perimeter. 
The nonlocal model describes the regularizing effect of adversarial training in binary classifications. 
The energy essentially depends on the interaction between two distributions modelling likelihoods for the associated classes. We overcome typical strict regularity assumptions for the distributions by only assuming that they have bounded $BV$ densities.
In the natural topology coming from compactness, we prove Gamma-convergence to a weighted perimeter with weight determined by an anisotropic function of the two densities. Despite being local, this sharp interface limit reflects classification stability with respect to adversarial perturbations.
We further apply our results to deduce Gamma-convergence of the associated total variations, to study the asymptotics of adversarial training, and to prove Gamma-convergence of graph discretizations for the nonlocal perimeter.
\\
\\
\textbf{Keywords:} Gamma-convergence, nonlocal perimeter, adversarial training, random geometric graph
\\
\textbf{AMS subject classifications:} 28A75, 49J45, 60D05, 68R10
\end{abstract}

{
\hypersetup{linkcolor=black}
\tableofcontents
}
\section{Introduction}

While modern machine learning methods and in particular deep learning \cite{goodfellow2016deep} are known to be effective tools for difficult tasks like image classification, they are prone to adversarial attacks \cite{szegedy2013intriguing}.
The latter are imperceptible perturbations of the input which destroy classification accuracy.
As a way to mitigate the effects of adversarial attacks, Madry et al. in \cite{madry2017towards} suggested a robust optimization algorithm to train more stable classifiers. 
Given \ksr{a metric space $\X$ acting as feature space, a set $\Y$ acting as label space, a probability} measure $\mu\in\M(\X\times \Y)$, which models \ksr{the distribution of training data}, a loss function $\ell : \Y \times \Y \to \R$, and a collection of classifiers $\mathcal{C}$, adversarial training takes the form of the minimization problem
\begin{align}\label{eq:AT}
    \inf_{u \in \C} \E_{(x,y)\sim\mu}\left[\sup_{\tilde x \in B(x,\eps)}\ell(u(\tilde x), y)\right].
\end{align}
Here we use the notation $\E_{z\sim\mu}[f(z)]:=\int f(z)\de\mu(z)$.
Adversarial training seeks a classifier for which adversarial attacks in the ball of radius $\eps$ around $x$ (with respect to the metric on $\X$) have the least possible impact, as measured through the function $\ell$.
Here $\eps>0$ is referred to as the adversarial budget and will play an important role in this article.

Since its introduction a significant body of literature has evolved around adversarial training, both focusing on its empirical performance and improvement (see the survey \cite{bai2021recent}), and its theoretical understanding.
Since the purpose of this article is primarily of theoretical nature, we restrict our discussion to the latter developments.
As it turns out, adversarial training has intriguing connections within mathematics.
Firstly, it was noted in different works that adversarial training is strongly connected to optimal transport. 
This was explored in the binary classification case, where $\Y = \{0,1\}$ and $\ell$ equals the $0$-$1$-loss, in a series of works, see \cite{pydi2020adversarial,trillos2022adversarial,bhagoji2019lower,pydi2021many} and the references therein.
Only recently these results were generalized to the multi-class case where $\Y=\{1,2,\dots,K\}$ by Garc\'ia Trillos et al. in \cite{trillos2022multimarginal} who characterize the associated adversarial training problem in terms of a multi-marginal optimal transport problem.
Secondly, it was already observed by Finlay and Oberman in \cite{finlay2021scaleable} that asymptotically, meaning for very small values of $\eps>0$ in \labelcref{eq:AT}, adversarial training is related to a regularization problem, where the gradient of the loss function is penalized in a suitable norm:
\begin{align}\label{eq:reg_AT}
    \inf_{u \in \C} \E_{(x,y)\sim\mu}\left[\ell(u(x), y)\right] + \eps \E_{(x,y)\sim\mu}\left[\norm{\grad_x\ell(u(x), y)}_*\right].
\end{align}
Here $\X$ is assumed to be a Banach space and $\norm{\cdot}_*$ is the corresponding dual norm.
While these connections were mostly formal, C. and N. Garc\'ia Trillos in \cite{garcia2022regularized} made them rigorous in the context of adversarial training for residual neural networks. 
Still even there the relation between adversarial training and regularization was of asymptotic type, in particular, not allowing statements about the relation of minimizers of \labelcref{eq:AT,eq:reg_AT}. 
Furthermore, in \cite{trillos2022adversarial} Garc\'ia Trillos and Murray regard adversarial training in the form of \labelcref{eq:AT} for binary classifiers as evolution with artificial time $\eps$ and relate it to \labelcref{eq:reg_AT} with a perimeter regularization at time $\eps=0$.
A different approach was taken in their work together with the first author of this paper \cite{bungert2022geometry} where, again in the binary classification case \ksr{and for $\ell$ the $0$-$1$-loss}, it was shown that adversarial training is equivalent to a non-local regularization problem:
\begin{align}\label{eq:TV_AT}
    \labelcref{eq:AT}
    =
    \inf_{u \in \C} \E_{(x,y)\sim\mu}\left[\ell(u(x), y)\right] + \eps \TV_\eps(u;\bm{\rho}).
\end{align}
Here $\TV_\eps(\cdot;\bm{\rho})$ denotes a nonlocal total variation functional depending on \ksr{the measures $\bm{\rho}:=(\rho_0,\rho_1)$ defined as $\rho_i := \mu(\cdot\times\{i\}$) for $i\in\{0,1\}$ which are---up to normalization---the conditional distributions of the two classes describing their respective likelihoods.}
The set of classifiers can be the set of characteristic functions of Borel sets $\C_\mathrm{char} = \{\chi_A \st A \in \mathfrak B(\X)\}$ or the set of ``soft classifiers'' $\C_\mathrm{soft} = \{u : \X \to [0,1]\}$ which live in a Lebesgue space equipped with a suitable measure on $\X$.
In this paper existence of solutions for adversarial training was proven, which included suitable relaxations of the objective function in \labelcref{eq:TV_AT} to a lower semi-continuous function, the construction of precise representatives, and the insight that the model with $\C_\mathrm{soft}$ is a convex relaxation of the model with $\C_\mathrm{char}$.
Furthermore, regularity properties of the decision boundaries of solutions are investigated.

We would like to emphasize that the results in \cite{bungert2022geometry} are proved for \emph{open balls} $B(x,\eps)$ in \labelcref{eq:AT} and this will also be the setting of the present paper.
Usually, adversarial training is defined using closed balls which does not change the model drastically but requires more care with respect to measurability of the underlying functions, see the discussion in \cite[Remark 1.3, Appendix B.1]{bungert2022geometry}.
For a different approach to proving existence---using a closed ball model---we refer to the work \cite{awasthi2021existence} by Awasthi et al., and the follow-up paper \cite{awasthi2021calibration} studying consistency of adversarial risks.

The focus of this paper will be on the asymptotics of the functional $\TV_\eps(\cdot;\bm{\rho})$ in \labelcref{eq:TV_AT}.
In fact, we will work with the associated perimeter functional $\Per_\eps(A;\bm{\rho}) := \TV_\eps(\chi_A;\bm{\rho})$ \ksr{for $A\subset\X$} since all statements, in particular Gamma-convergence, proved for the perimeter directly carry over to the total variation (see \cref{sec:applications}).
This perimeter was shown in \cite{bungert2022geometry} to be of the form
\begin{equation}\label{eqn:perDefIntro}
\begin{aligned}
    \Per_\eps(A;\bm{\rho}) = 
    \frac{1}{\eps} 
    \int_\X 
    \left(\nu\text{-}\esssup_{B(x,\eps)}\chi_A - \chi_A(x)\right)\de\rho_0(x)
    +
    \frac{1}{\eps} 
    \int_\X
    \left(\chi_A(x)-\nu\text{-}\essinf_{B(x,\eps)}\chi_A\right)\de\rho_1(x),
\end{aligned}
\end{equation}
\ksr{where the essential supremum and infimum are taken with respect to a suitable measure $\nu$ with sufficiently large support.}
When $\X=\R^d$ and $\rho_i$ equal the Lebesgue measure, the perimeter \labelcref{eqn:perDefIntro} can be used to recover the Minkwoski content of subset of $\R^d$ by sending $\epsilon\to 0$, see \labelcref{eqn:minkowskiContent} below. In this simplified setting, the nonlocal perimeter has applications in image processing \cite{barchiesi2010variational} and is mathematically well-understood. 
A thorough study of its properties like isoperimetric inequalities or compactness was undertaken in \cite{cesaroni2017isoperimetric,cesaroni2018minimizers}, Gamma-convergence of related variants to local perimeters was investigated in \cite{chambolle2010continuous,chambolle2014remark}, and associated curvature flows were analyzed in \cite{chambolle2012nonlocal,chambolle2015nonlocal}. We note that Chambolle et al. \cite{chambolle2014remark} introduce anisotropy into the perimeter by replacing the ball $B(x,r)$ in the definition of \labelcref{eqn:perDefIntro} by a scaled convex set $C(x,r)$. 

In this paper, we study the asymptotic behavior of the nonlocal perimeter \labelcref{eqn:perDefIntro} as $\eps\to 0$, using the framework of Gamma-convergence (see, e.g., \cite{braidesGamma2007,DalMasoBook}). This approach is widely used in applications to materials science (see, e.g., \cite{ambrosio-tortorelli-1990,ContiFonsecaLeoni-gammConv2grad,FJMrigid,ModicaMortola}) and is particularly applicable to the study of energy minimization problems depending on singular perturbations, where it can describe complex energy landscapes in terms of simpler understood effective energies. In the case of phase separation in binary alloys, energetic minimizers closely approximate minimal surfaces \cite{Modica87}. Likewise, we will relate local minimizers of the perimeter \labelcref{eqn:perDefIntro} to a weighted minimal surface, which has a transparent geometric interpretation. 

Though the nonlocal perimeter \labelcref{eqn:perDefIntro} is not a phase field approximation of the classical perimeter, similar analytic tools are helpful. 
The Ambrosio--Tortorelli functional was introduced as an elliptic regularization of the Mumford--Shah energy for image segmentation with the nature of approximation made precise via Gamma-convergence \cite{ambrosio-tortorelli-1990,braidesFreeDiscont}. 
From the technical perspective, our work is related to the results of Fonseca and Liu \cite{fonsecaLiu2017} where Gamma-convergence of a weighted Ambrosio--Tortorelli functional is proven. 
In their setting, they consider a density described by a bounded $SBV$ function with density uniformly bounded away from~$0$. 
In contrast to Chambolle et al. \cite{chambolle2014remark} and Fonseca and Liu \cite{fonsecaLiu2017}, a principal challenge in our setting will be understanding the interaction between the densities $\rho_0$ and $\rho_1$ in the energy \labelcref{eqn:perDefIntro} and how these give rise to preferred directions.

We assume that $\X= \Omega \subset \R^d$ and that the measures $\rho_i$ have densities with respect to the Lebesgue measure and are supported on some subset of the domain $\Omega$.
While for smooth densities Gamma-convergence is proven relatively easily (as in \cite{chambolle2014remark}), we only assume that the densities are bounded $BV$ functions. 
In this case, we prove that the Gamma-limit is an anisotropic and weighted perimeter of the form
\begin{align*}
    A \mapsto \int_{\partial A\cap\Omega} \beta\left(\nu_A;\bm{\rho}\right)\de\H^{d-1},
\end{align*}
where $\nu_A$ denotes the unit normal vector to the boundary of $A$. More rigorous definitions are given the \cref{sec:main_results}. 
Here, $BV$ functions are a natural space for the distributions as discontinuities are allowed, but they are sufficiently regular to be well-defined on surfaces and thereby prescribe interfacial weights.
We note that while the anisotropic dependence on the normal $\nu_A$ vanishes for continuous densities $\rho_i$, in the discontinuous case, the anisotropy provides a direct interpretation of the asymptotic regularization effect coming from adversarial training \labelcref{eq:AT} for small adversarial budgets (see \cref{ex:betaInterp,ex:bayes}). 

An interesting consequence of our Gamma-convergence result is the convergence of adversarial training \labelcref{eq:AT} as $\eps\to 0$ to a solution of the problem with $\eps=0$ with minimal perimeter.
Furthermore, the primary result in the continuum setting can be used to recover a Gamma-convergence result for graph discretizations of the nonlocal perimeter; a setting that is especially relevant in the context of graph-based machine learning \cite{GarcSlep15,GarcSlep16,garcia2022graph}. 
Our approach for this discrete to continuum convergence is in the spirit of these works and relies on $TL^p$ (transport $L^p$) spaces which were introduced for the purpose of proving Gamma-convergence of a graph total variation by Garc\'ia Trillos and Slep\v cev in \cite{GarcSlep15}, but have also been used to prove quantitative convergence statements for graph problems, see, e.g., the works \cite{calder2020rates,calder2022improved} by Calder et al.

The rest of the paper is structured as follows: 
In \cref{sec:main_results} we introduce our notation, state our main results, and list some important properties of the nonlocal perimeter.
\cref{sec:Gamma-convergence} is devoted to proving a compactness result as well as Gamma-convergence of the nonlocal perimeters. 
In \cref{sec:applications}, we finally apply our results to deduce Gamma-convergence of the corresponding total variations, prove conditional convergence statements for adversarial training, and prove Gamma-convergence of graph discretizations.

\section{Setup and main results}\label{sec:main_results}

\subsection{Notation}

The most important bits of our notation are collected in the following.

\paragraph{Balls and cubes}

For $x\in\R^d$ and $r>0$ we denote by $B(x,r):=\{y\in\R^d\st\abs{x-y}<r\}$ the open ball with radius $r$ around $x$.
Furthermore, $Q_{\nu}(x,r)$ is an open cube centered at $x\in\R^d$ with sides of length $r$ and two faces orthogonal to $\nu$. 
If $\nu$ is absent, the cube is assumed to be oriented along the axes.
We also define $Q_{\nu}^{\pm}(x,r) := \{y\in Q_{\nu}(x,r) : \pm \langle y-x,\nu \rangle > 0 \}$ and let $Q'(x,r)$ denote the $d-1$ dimensional cube centered at $x$ with sides of length $r$.

\paragraph{Measures and sets}

The $d$-dimensional Lebesgue measure in $\R^d$ is denoted by $\mathcal{L}^d$ and the $k$-dimensional Hausdorff measure in $\R^d$ as $\mathcal{H}^k$.
For a set $A\subset\R^d$ we denote its complement by $A^c := \R^d\setminus S$.
If $A\subset\Omega\subset\R^d$ is a subset of some fixed other set $\Omega$, we let $A^c$ denote its relative complement $\Omega\setminus A$.
The symmetric difference of two sets $A,B\subset\R^d$ is denoted by $A\triangle B := (A\setminus B)\cup (B\setminus A)$.
Furthermore, the characteristic function of a set $A\subset\R^d$ is denoted by 
\begin{align*}
    \chi_A(x) :=
    \begin{dcases}
    1 \quad&x \in A,\\
    0 \quad&x\notin A,
    \end{dcases}
\end{align*}
and by definition it holds $\mathcal{L}^d(A\triangle B) = \int_{\R^d}\abs{\chi_A-\chi_B}\de x$.
The distance function to a set $A\subset\R^d$ is defined as
\begin{align*}
    \dist(\cdot,A) : \R^d\to[0,\infty),
    \quad
    \dist(x,A) := \inf_{y\in A}\abs{x-y}.
\end{align*}

\paragraph{Measure-theoretic set quantities}

For $t\in[0,1]$ the points where a measurable set $A\subset\R^d$ has density $t$ are defined as
\begin{align*}
    A^t := \left\lbrace x\in \R^d \st \lim_{r\downarrow 0}\frac{\mathcal{L}^d(A\cap B(x,r))}{\mathcal{L}^d(B(x,r))}=t\right\rbrace.
\end{align*}
The Minkowski content $\mathcal{M}(A)$ of $A\subset\R^d$ is defined as the following limit (in case it exists):
\begin{equation}\label{eqn:minkowskiContent}
    \mathcal{M} (A) := \lim_{\epsilon\to 0}\mathcal{M}_\epsilon(A) : =\lim_{\epsilon\to 0}\frac{\mathcal{L}^d(\{x:\dist(x,A)<\epsilon\})}{2\epsilon}.
\end{equation}
We denote by $\partial^\ast A$ the reduced boundary of a set $A$. This is the set of points where the measure-theoretic normal exists on the boundary of $A$ \cite[Definition 3.54]{ambrosio2000functions}.

\paragraph{Functions of bounded variation}

For an open set $\Omega\subset\R^d$ we let $BV(\Omega)$ denote the space of functions of bounded variation \cite{ambrosio2000functions}.
Let $u\in BV(\Omega)$ and $M \subset \Omega$ be an $\H^{d-1}$-rectifiable set with normal $\nu$ (defined $\mathcal{H}^{d-1}$-a.e.). 
For $\H^{d-1}$-a.e. point $x\in M$, the measure-theoretic traces in the directions $\pm\nu$ exist and are denoted by $u^{\pm \nu}(x)$ \cite[Theorem 3.77]{ambrosio2000functions}.
These are the values approached by $u$ as the input tends to $x$ within the half-space $\{y\st \langle y-x,\pm \nu \rangle>0\}$, precisely,
\begin{align*}
    u^{\pm\nu}(x) := \lim_{r\to 0}\intbar_{B(x,r)\cap\{y\st\langle y-x,\pm \nu \rangle>0\}} u(y) \de y.
\end{align*}
We typically write $u^\nu$ instead of $u^{+\nu}$.
We denote by $u^+(x) := \max\{u^{\nu}(x),u^{-\nu}(x)\}$ the maximum of the trace \ksr{values}, and likewise $u^-(x):=\min\{u^{\nu}(x),u^{-\nu}(x)\}$ for the minimum. \ksr{Note that this notation is different from the one used in \cite[Definition 3.67]{ambrosio2000functions}.}
The standard total variation of a function $u\in L^1(\Omega)$ is denoted by $\TV(u)$ and satisfies $\TV(u)=|Du|(\Omega)$ for $Du$ the measure representing the distributional derivative ($+\infty$ if it is not a finite measure).
For $u\in BV(\Omega)$ we let $J_u$ denote its jump set, see, for example, \cite{ambrosio2000functions} for a definition.

\subsection{Main results}

Let $\Omega\subset\R^d$ be an open domain.
We consider two non-negative measures $\rho_0,\rho_1$ which are absolutely continuous with respect to the $d$-dimensional Lebesgue measure.
To simplify our notation we shall identify these measures with their densities from now on, meaning that $\de\rho_i(x) = \rho_i(x) \de x$ and $\rho_i\in L^1(\Omega)$ for $i\in\{0,1\}$.

We define the nonlocal perimeter of a measurable set $A\subset\Omega$ with respect to the measures $\bm{\rho}:=(\rho_0,\rho_1)$ and a parameter $\eps>0$ as
\begin{align}\label{eq:nonlocal_perimeter}
    \Per_{\eps}(A;\bm{\rho})
    :=
    \frac{1}{\eps} 
    \int_\Omega 
    \left(\esssup_{B(x,\eps)\cap\Omega}\chi_A - \chi_A(x)\right)\rho_0(x)\de x
    +
    \frac{1}{\eps} 
    \int_\Omega 
    \left(\chi_A(x)-\essinf_{B(x,\eps)\cap\Omega}\chi_A\right)\rho_1(x)\de x.
\end{align}
\ksr{It arises a special case of \labelcref{eqn:perDefIntro} by choosing $\X=\Omega$ and $\nu$ as the Lebesgue measure.}
Our main result that we prove in this paper is Gamma-convergence of the nonlocal perimeters \labelcref{eq:nonlocal_perimeter} to the localized version which preserves the apparent anisotropy of the energy. To motivate the correct topology for Gamma-convergence we first state a compactness property of the energies. 
\begin{theorem}\label{thm:gammaCompact}
Let $\Omega\subset\R^d$ be an open and bounded Lipschitz domain, and let $\rho_0,\rho_1\in BV(\Omega)\cap L^\infty(\Omega)$ satisfy $\essinf_\Omega(\rho_0+\rho_1)>0$. Then for any sequence $(\eps_k)_{k\in\N}$ with $\lim_{k\to\infty}\eps_k=0$ and collection of sets $A_k \subset \Omega$ with $\limsup_{k\to \infty}\Per_{\eps_k}(A_{k};\bm{\rho}) < \infty$, we have that up to a subsequence (not relabeled)
\begin{equation}\label{eqn:gammaCompact}
\chi_{A_k} \to \chi_A \text{ in }L^1(\Omega) \quad \text{ and } \quad  A\text{ is a set of finite perimeter.}
\end{equation}
\end{theorem}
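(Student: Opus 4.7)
The plan is to deduce compactness from a uniform bound on the volume of the $\eps$-tube around the essential boundary of $A_k$, and then to derive the finite-perimeter property of the limit by passing to the limit in the resulting shift estimate. Identifying the supports of the two integrands in \eqref{eq:nonlocal_perimeter}, I first rewrite
\begin{equation*}
\eps\Per_\eps(A;\rho) = \int_{A^\eps_+}\rho_0\de x + \int_{A^\eps_-}\rho_1\de x,
\end{equation*}
where $A^\eps_+ := \{x\in\Omega\setminus A : \mathcal{L}^d(B(x,\eps)\cap A\cap\Omega)>0\}$ and $A^\eps_- := \{x\in A : \mathcal{L}^d(B(x,\eps)\cap A^c\cap\Omega)>0\}$. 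Their union $T_\eps := A^\eps_+\cup A^\eps_-$ is contained in an $\eps$-neighborhood of the essential boundary of $A$, and one easily checks the inclusion $A^\eps_+\subset A^\eps_- + B(0,\eps)$ and its symmetric counterpart.

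The \textbf{main technical step} is to prove $\mathcal{L}^d(T_{\eps_k})\leq C\eps_k$, with $C$ depending only on $M := \sup_k\Per_{\eps_k}(A_k;\rho)$, $c := \essinf_\Omega(\rho_0+\rho_1)$, $\|\rho_i\|_{L^\infty}$, and $|D\rho_i|(\Omega)$. The naive inequality
\begin{equation*}
c\,\mathcal{L}^d(T_\eps) \leq \int_{T_\eps}(\rho_0+\rho_1)\de x = \eps\Per_\eps(A;\rho) + \int_{A^\eps_+}\rho_1\de x + \int_{A^\eps_-}\rho_0\de x
\end{equation*}
leaves two ``cross terms'' not controlled directly by the energy: since only the sum is bounded below, $\rho_1$ may vanish on $A^\eps_+$ (and likewise $\rho_0$ on $A^\eps_-$). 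To close this gap I combine the $BV$ translation estimate $\int_\Omega|\rho_i(x+h)-\rho_i(x)|\de x\leq|h||D\rho_i|(\Omega)$ (after Lipschitz extension of $\rho_i$ to $\R^d$, which uses the Lipschitz regularity of $\partial\Omega$) with the inclusion $A^\eps_+\subset A^\eps_-+B(0,\eps)$, via a covering/averaging argument, to obtain
\begin{equation*}
\int_{A^\eps_+}\rho_1\de x \leq C'\int_{A^\eps_-}\rho_1\de x + C'\eps|D\rho_1|(\Omega),
\end{equation*}
and the analogous estimate for $\int_{A^\eps_-}\rho_0$. Summing yields the tube bound.

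Once the tube bound is in hand, it gives $\|\chi_{A_k}(\cdot+h)-\chi_{A_k}\|_{L^1(\Omega)}\leq\mathcal{L}^d(T_{\eps_k})\leq C\eps_k$ for $|h|<\eps_k$, and by translation invariance of the $L^1$-norm together with decomposition of $h$ into $\lceil|h|/\eps_k\rceil$ chunks of size at most $\eps_k$, this extends to $\|\chi_{A_k}(\cdot+h)-\chi_{A_k}\|_{L^1(\Omega)}\leq C(\eps_k+|h|)$ for all $h$. Combined with the trivial bound $\|\chi_{A_k}\|_{L^1(\Omega)}\leq\mathcal{L}^d(\Omega)$, the Kolmogorov--Riesz criterion produces a subsequence with $\chi_{A_k}\to\chi_A$ in $L^1(\Omega)$. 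Sending $k\to\infty$ at fixed $h$ in the shift estimate yields $\|\chi_A(\cdot+h)-\chi_A\|_{L^1(\Omega)}\leq C|h|$ for all small $h$, which by the classical characterization of $BV$ implies $\chi_A\in BV(\Omega)$, i.e.\ $A$ has finite perimeter. The \textbf{principal obstacle} is the cross-term estimate; the pointwise lower bound on $\rho_0+\rho_1$ alone does not suffice, and the argument essentially uses the $BV$ regularity of both densities.
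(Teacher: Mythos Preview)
Your main technical step --- the uniform tube bound $\mathcal{L}^d(T_{\eps_k})\leq C\eps_k$ --- is false under the stated hypotheses, and the cross-term estimate you invoke to prove it already fails in simpler situations. For the cross-term estimate, take $\rho_0=\rho_1\equiv\tfrac12$ (so $|D\rho_i|=0$) and let $A$ be a ball of radius $\delta\ll\eps$. Then $A^\eps_-=A$ has measure $\sim\delta^d$ while $A^\eps_+$ has measure $\sim\eps^d$, so your claimed inequality $\int_{A^\eps_+}\rho_1\leq C'\int_{A^\eps_-}\rho_1+C'\eps|D\rho_1|$ would force $\eps^d\lesssim\delta^d$, which is absurd. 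The inclusion $A^\eps_+\subset A^\eps_-+B(0,\eps)$ is correct but gives no volume comparison, since dilating a small set by $\eps$ can increase its measure by an arbitrary factor. More seriously, the tube bound itself fails: take $\Omega=(0,1)^2$, $\rho_0\equiv 0$, $\rho_1\equiv 1$, $\eps_k=1/k$, and let $A_k$ be a union of $N_k\sim k^2$ disjoint balls of radius $\delta_k=e^{-k}$ placed on a grid of spacing $5\eps_k$. Then $\Per_{\eps_k}(A_k;\rho)=\eps_k^{-1}|A_k|\sim k^3e^{-2k}\to 0$, yet $\mathcal{L}^d(T_{\eps_k})\sim N_k\eps_k^2$ stays bounded away from zero, so no bound of the form $C\eps_k$ is possible. (The theorem's conclusion still holds here since $\chi_{A_k}\to 0$ in $L^1$.)

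The underlying issue is that the energy only sees $\rho_0$ on the outer strip and $\rho_1$ on the inner strip, so when one density vanishes the corresponding strip is completely uncontrolled in volume. The paper circumvents this by never attempting a global tube bound: it introduces the Lipschitz regularizations $u_k(x)=(1-\eps_k^{-1}\dist(x,A_k^1))\vee 0$ and $v_k(x)=(\eps_k^{-1}\dist(x,(A_k^1)^c))\wedge 1$, observes that $\int|\nabla u_k|\rho_0\leq\Per^0_{\eps_k}(A_k;\rho)$ and $\int|\nabla v_k|\rho_1\leq\Per^1_{\eps_k}(A_k;\rho)$, and then restricts to the superlevel sets $\Omega_i=\{\rho_i>\delta\}$ (chosen via coarea so that they have finite perimeter and cover $\Omega$). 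On $\Omega_0$ the sequence $u_k\chi_{\Omega_0}$ is then uniformly bounded in $BV$ by a product-rule estimate, and similarly for $v_k\chi_{\Omega_1}$ on $\Omega_1$; $BV$-compactness and patching on the overlap yield the result. This localization to where each density is individually bounded below is exactly what your global argument is missing.
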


\begin{remark}
The assumption that $\rho_0$ and $\rho_1$ belong to $BV(\Omega)$ is crucial for regularity of the set $A$ following from \cref{thm:gammaCompact}. To see this, fix a set $A\subset \Omega$ which is not a set of finite perimeter. Defining $\rho_0 = \chi_A$ and $\rho_1  = \chi_{A^c}$, we have that $\Per_\epsilon(A;\bm{\rho}) \equiv 0$, showing that \labelcref{eqn:gammaCompact} cannot hold. If one enforces the constraint $\essinf_\Omega \rho_i > 0$ on both densities, this problem is resolved, however this is an unreasonable constraint in the context of classification since it would require both classes to be entirely mixed. 
\end{remark}

Supposing now that $A$ is a set of finite perimeter (i.e., $\chi_A \in BV(\Omega)$) and $\rho_0,\rho_1 \in BV(\Omega)$, we define the function
\begin{align}\label{eq:beta_nu}
    \beta(\nu;\bm{\rho}) (x) :=
    \min\{\rho_0^{\nu} (x) + \rho_1^{\nu}(x) ,\,\rho_0^{-\nu}(x) + \rho_1^{-\nu}(x),\, \rho_0^{-\nu}(x) + \rho_1^{\nu}(x)\}\quad \text{ for } x \in \partial^* A,
\end{align}
where $\nu  = \frac{D\chi_A}{|D\chi_A|}$ is the measure-theoretic inner normal for $A$. We suppress dependence of $\beta(\nu;\bm{\rho})$ on $A$ as $\beta(\nu;\bm{\rho})$ is uniquely prescribed, in the sense that if $A_0$ and $A_1$ are two sets of finite perimeter then the definition \labelcref{eq:beta_nu} is $\mathcal{H}^{d-1}$-a.e. equivalent in $\left\{x\in \Omega: \frac{D\chi_{A_0}}{|D\chi_{A_0}|}(x) = \frac{D\chi_{A_1}}{|D\chi_{A_1}|}(x)\in S^{d-1}\right\}.$
Note that if $\rho_0$ and $\rho_1$ are also continuous, then it holds for all $\nu$ that
\begin{align}
    \beta(\nu;\bm{\rho}) = \rho_0 + \rho_1,
\end{align}
but for general $BV$-densities $\beta$ may be anisotropic.
\begin{example}\label{ex:betaInterp}
To understand the behavior of $\beta$, it is informative to consider two simple $1$-dimensional examples. Supposing that 
\begin{equation}\label{eqn:simpleBetaCase0}
\Omega = (-1,1), \ A = (-1,0), \  \rho_0 = \chi_{(0,1)},\text{ and } \rho_1 = \chi_{(-1,0)},
\end{equation} we can ask what the minimum energy of $A$ should be in the limit. Note that $\Per_\epsilon (A;\bm{\rho}) = 2$; this is the case for $\rho_0^{-\nu} + \rho_1^{\nu} =2$ in the definition of $\beta$. But if we shift $A$ to the right and define $A_\delta = (-1,\delta)$ with $\delta > 0$, for sufficiently small $\epsilon$, then $\Per_\epsilon (A_\delta;\bm{\rho}) = 1$, which amounts to picking up the minimum $\rho_0^{-\nu} + \rho_1^{-\nu} = 1$ in the definition of $\beta.$ Likewise, one could shift to the left to recover $\rho_0^{\nu} + \rho_1^{\nu} = 1$. However, we can never recover a minimum energy like $\rho_0^{\nu} + \rho_1^{-\nu} = 0$ as this would require orienting the boundary of set $A$ like the boundary of the set $(0,1)$, which it is not close to in an $L^1$ sense.
Consequently, the limit perimeter of $A$ is $\Per(A;\bm{\rho})=\min\{1,1,2\}=1$.
To see that the minimum of $\rho_0^{-\nu} + \rho_1^{\nu}$ is needed in the definition of $\beta$, one can simply swap the roles of $\rho_0$ and $\rho_1$ in the previous example. Precisely, for 
\begin{equation}\label{eqn:simpleBetaCase}
\Omega = (-1,1), \ A = (-1,0), \ \rho_0 = \chi_{(-1,0)},\text{ and } \rho_1 = \chi_{(0,1)},
\end{equation} we have that $\Per_\epsilon (A;\bm{\rho}) = 0 = \rho_0^{-\nu} + \rho_1^\nu$, and shifting $A$ increases the energy to $\rho_0^{\nu} + \rho_1^\nu = \rho_0^{-\nu} + \rho_1^{-\nu} = 1$.
Consequently, we have $\Per(A;\bm{\rho})=\min\{1,1,0\}=0$.
\end{example}

With the density $\beta$ defined, we may state the principal Gamma-convergence result of the paper.

\begin{theorem}\label{thm:gamma}
Let $\Omega\subset\R^d$ be an open and bounded Lipschitz domain, and let $\rho_0,\rho_1\in BV(\Omega)\cap L^\infty(\Omega)$ satisfy $\essinf_\Omega(\rho_0+\rho_1)>0$. Then
\begin{align*}
    \Per_\eps(\cdot;\bm{\rho})
    \overset{\Gamma}{\longrightarrow}
    \Per(\cdot;\bm{\rho})
\end{align*}
in the $L^1(\Omega)$ topology, where the weighted perimeter is defined by
\begin{align}\label{eq:local_perimeter}
    \Per(A;\bm{\rho}) := 
    \begin{dcases}
    \int_{\partial^\ast A\cap \Omega}
    \beta\left(\frac{D\chi_A}{\abs{D\chi_A}};\bm{\rho}\right)
    \de\H^{d-1}\quad&\text{if }\chi_A\in BV(\Omega),\\
    \infty\quad&\text{else}.
    \end{dcases}
\end{align}
\end{theorem}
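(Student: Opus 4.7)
The plan is to execute the two standard halves of a $\Gamma$-convergence argument --- the liminf inequality and the construction of a recovery sequence --- relying on \cref{thm:gammaCompact} to ensure that in both parts we may restrict attention to limits $A$ with $\chi_A \in BV(\Omega)$. Because the functional is built from one-sided differences of $\chi_{A_k}$ weighted by $\rho_0$ and $\rho_1$, the correct localization of the energy is through a natural Radon measure on $\Omega$, and the anisotropic minimum in $\beta$ should emerge from a blow-up computation at reduced-boundary points.

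For the liminf, I would associate to each $A_k$ the Radon measure
\begin{equation*}
\mu_k := \frac{1}{\eps_k}\left(\esssup_{B(\cdot,\eps_k)\cap\Omega}\chi_{A_k}-\chi_{A_k}\right)\rho_0\,\mathcal{L}^d + \frac{1}{\eps_k}\left(\chi_{A_k}-\essinf_{B(\cdot,\eps_k)\cap\Omega}\chi_{A_k}\right)\rho_1\,\mathcal{L}^d,
\end{equation*}
so that $\mu_k(\Omega)=\Per_{\eps_k}(A_k;\rho)$. After passing to a subsequence attaining the liminf, a further extraction yields $\mu_k\wsto\mu$ weakly-$\ast$ to a finite Radon measure. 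By Besicovitch differentiation of $\mu$ against $\mathcal{H}^{d-1}\restr\partial^*A$, it suffices to prove that at $\mathcal{H}^{d-1}$-a.e.\ $x_0\in\partial^*A$ with inner normal $\nu=\nu(x_0)$,
\begin{equation*}
\liminf_{r\to 0}\frac{\mu(Q_\nu(x_0,r))}{r^{d-1}}\geq \beta(\nu;\rho)(x_0).
\end{equation*}
At a generic such point the rescaled sets $(A_k-x_0)/r$ converge in $L^1(Q_\nu(0,1))$ to the half-space $\{\langle y,\nu\rangle>0\}$ along a suitable diagonal in $k$ and $r$, and the $BV$-traces $\rho_i^{\pm\nu}$ act as effective constants on each side of the blown-up hyperplane. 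The outside-slab integral then picks up $\rho_0$-mass weighted by whichever of $\rho_0^\nu,\rho_0^{-\nu}$ matches the local arrangement of $A_k$'s complement (and analogously $\rho_1^{\pm\nu}$ for the inside slab); minimizing over the three topologically admissible combinations produces exactly $\beta(\nu;\rho)(x_0)$, with the fourth combination $\rho_0^\nu+\rho_1^{-\nu}$ forbidden by the half-space limit, mirroring the one-dimensional pictures in \cref{rmk:betaInterp}.

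For the limsup, fix $A$ with $\chi_A\in BV(\Omega)$ and let $\phi:\partial^*A\to\{-1,0,+1\}$ be a Borel selection of the argmin in \labelcref{eq:beta_nu}, interpreted as ``shift inward by $\eps_k$'', ``do not shift'', ``shift outward by $\eps_k$''. On a fine Vitali cover $\{Q_{\nu_j}(x_j,r_j)\}$ of $\partial^*A$ by cubes so small that $A\cap Q_j$ is $L^1$-close to the half-cube $Q_{\nu_j}^{+}(x_j,r_j)$ and the traces $\rho_i^{\pm\nu_j}$ are nearly constant on $\partial^*A\cap Q_j$, I would define $A_k$ inside each $Q_j$ to be the shifted half-space $\{\langle y-x_j,\nu_j\rangle>-\phi_j\eps_k\}\cap Q_j$ and to agree with $A$ outside the cover. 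Since the total volume modified is $O(\eps_k)$, $\chi_{A_k}\to\chi_A$ in $L^1$. A direct cube-by-cube computation identifies the asymptotic contribution of $\Per_{\eps_k}(A_k;\rho)$ in $Q_j$ as $\beta(\nu_j;\rho)(x_j)\,r_j^{d-1}$ up to errors vanishing with the cube diameter; summing and refining the cover yields $\limsup_k\Per_{\eps_k}(A_k;\rho)\leq \Per(A;\rho)$.

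The main obstacle will be the limsup construction: the local shifts must be glued across cube interfaces without producing spurious perimeter, and the cube-by-cube accounting depends on the $BV$-traces of $\rho_0,\rho_1$ being well-approximated by constants on small rectifiable pieces, which I expect to handle via an $\mathcal{H}^{d-1}$-a.e.\ Lebesgue-type property of traces together with a careful choice of cubes whose boundaries carry negligible $\mu_k$-mass. In the liminf, a subtler point is to rigorously justify that a sequence of sets converging in $L^1$ to a half-space must have its $\eps_k$-neighborhoods asymptotically concentrated on the ``correct'' sides of the blown-up hyperplane --- this is precisely what excludes the fourth combination and is the source of the anisotropy encoded in $\beta$.
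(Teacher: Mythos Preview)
Your liminf strategy via blow-up and Besicovitch differentiation is a reasonable alternative to the paper's approach, which instead slices to reduce to $d=1$ and then runs an explicit contradiction argument (Step~1 of \cref{thm:liminf2}). The delicate point you identify---excluding the fourth combination $\rho_0^{\nu}+\rho_1^{-\nu}$---is exactly where the paper invests effort: assuming the energy drops below $\beta_\nu(0)$, one is forced into the regime $\rho_0^{\nu}<\rho_0^{-\nu}$ and $\rho_1^{-\nu}<\rho_1^{\nu}$, and then a pigeonhole argument produces a point $x_\eps\in A_\eps^1\cap(0,\eta)$ whose existence drives the $\rho_0$-contribution back up to $\rho_0^{-\nu}$, contradicting the assumption. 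Your sketch asserts that the half-space limit rules out the fourth case but gives no mechanism; the exclusion happens at the level of the $\eps$-slab energies, not the $L^1$ limit, so this step would need a comparable argument in your framework.

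The limsup plan has a real gap. You propose to Vitali-cover $\partial^*A$ and, inside each cube $Q_j$, replace $A$ by a shifted flat half-space while keeping $A$ outside. For a general set of finite perimeter, $A\cap Q_j$ is only $L^1$-close to the half-cube; along $\partial Q_j$ the flat replacement and $A$ disagree on a set whose boundary contributes spurious nonlocal perimeter of order $r_j^{d-1}$, and summing over the cover this is comparable to $\mathcal{H}^{d-1}(\partial^*A)$ and does not vanish. The paper resolves this by a regularization step you are missing: it first invokes the approximation result of De~Philippis et~al.\ to replace $\chi_A$ by a nearby $u\in SBV$ whose jump set lies in a compact $C^1$-manifold, then selects via Sard's theorem a level set $A_s=\{u>s\}$ with $\partial A_s$ a $C^1$ graph away from a $(d-2)$-dimensional set. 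Only for such smooth $A_s$ does the local construction (\cref{prop:smoothRecovery}) work, and even then the modification is not a replacement by a half-space but an augmentation $A_s\cup(Q'(0,r(1-\delta))\times(-a,a))$ which by design agrees with $A_s$ near $\partial Q_j$, so no seam appears. Your acknowledged ``main obstacle'' cannot be handled by choosing cubes carefully; it requires regularizing $A$ first.
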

We note that \cref{ex:betaInterp,thm:gamma} provide a direct interpretation for how the nonlocal perimeter \labelcref{eq:nonlocal_perimeter} selects a minimal surface for adversarial training \labelcref{eq:AT}. 
The fidelity term $\E_{(x,y)\sim\mu}\left[\ell(u(x), y)\right]$ in \labelcref{eq:AT} roughly wants to align $A$ with $\supp\rho_1$, for which a simple case is given by \labelcref{eqn:simpleBetaCase0}.
In this situation, the cost function in \labelcref{eq:AT} for $A=(-1,0)$ takes the value $2\eps$ and the perimeter picks up the value $\beta=2$. 
In contrast, for the sets $A_\delta=(-1,\pm\delta)$ for $\delta>\eps$ the cost function has the value $\delta+\eps$ and these sets recover the limiting perimeter with $\beta=1$.
Hence, adversarial regularization reduces the cost by effectively performing a preemptive stabilizing perturbation of the classification region.

\ksr{
\begin{remark}[Decomposition of the limit perimeter]
We remark that the limit perimeter may be decomposed into an isotropic perimeter plus an anisotropic energy living only on the intersection of the jump sets $J_{\rho_0}\cap J_{\rho_1}$ of the two densities. In particular, one can directly verify that
\begin{align}\label{eq:perimeter_decomp}
    \Per(A;\bm{\rho}) 
    = 
    \int_{\partial^*A \cap \Omega} \rho_0^- + \rho_1^- \de\H^{d-1} 
    + 
    \int_{\partial^* A \cap J_{\rho_0}\cap J_{\rho_1} \cap \Omega} 
    \min\left\lbrace
    \left(\rho_0^{-\nu_A}-\rho_0^{\nu_A}\right)_+, \left(\rho_1^{\nu_A}-\rho_1^{-\nu_A}\right)_+
    \right\rbrace
    \de\H^{d-1},
\end{align}
where $\nu_A := \frac{D\chi_A}{\abs{D\chi_A}}$ is the measure-theoretic inner unit normal of the boundary of $A$ and we use the notation $t_+:=\max(t,0)$ for $t\in\R$.
Note that the only case for which the non-isotropic term is different from zero is the case that $\rho_0^{\nu_A}<\rho_0^{-\nu_A}$ and $\rho_1^{\nu_A}>\rho_1^{-\nu_A}$.
In particular, if at least one of the densities is continuous, the anisotropy vanishes.
\end{remark}
}

The most important consequence of \cref{thm:gammaCompact,thm:gamma} is a convergence statement as $\eps\to 0$ for the adversarial training problem for binary classifier which takes the form
\begin{align}\label{eq:AT_Omega_main_results}
    \inf_{A\in\mathfrak B(\Omega)}\E_{(x,y)\sim\mu}\left[\sup_{\tilde x\in B(x,\eps)\cap\Omega}\abs{\chi_A(\tilde x)-y}\right].
\end{align}
For this we need some mild regularity for the so-called Bayes classifiers, i.e., the solutions of \labelcref{eq:AT_Omega_main_results} for $\eps=0$.
The condition is derived in \cref{ssec:AT} and stated in precise form in \labelcref{eq:source_condition} there.
Sufficient for it to hold is that the Bayes classifier with minimal value of the limiting perimeter $\Per(A;\bm{\rho})$ defined in \labelcref{eq:local_perimeter} has a sufficiently smooth boundary.
We have the following convergence statement for minimizers of adversarial training.
\begin{theorem}[Conditional convergence of adversarial training]\label{thm:convergence_AT}
Under the conditions of \cref{thm:gammaCompact,thm:gamma} and assuming the source condition \labelcref{eq:source_condition}, any sequence of solutions to \labelcref{eq:AT_Omega_main_results} possesses a subsequence converging to a minimizer of
\begin{align}\label{eq:perimeter_minimal_solution}
    \min\big\lbrace \Per(A;\bm{\rho}) \st A \in \argmin_{B\in\mathfrak B(\Omega)}\E_{(x,y)\sim\mu}[\abs{\chi_{B}(x)-y}]\big\rbrace.
\end{align}
\end{theorem}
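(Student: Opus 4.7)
The plan is to exploit the variational identity \labelcref{eq:TV_AT}, which recasts \labelcref{eq:AT_Omega_main_results} in the equivalent form
\begin{equation*}
    F_0(A) + \eps\,\Per_\eps(A;\rho), \qquad F_0(A) := \int_\Omega \chi_A\,\rho_0\de x + \int_\Omega (1-\chi_A)\,\rho_1\de x.
\end{equation*}
The fidelity $F_0$ is exactly the $\eps=0$ objective in \labelcref{eq:perimeter_minimal_solution}, and since $\rho_0,\rho_1\in L^\infty(\Omega)$ it is continuous with respect to $L^1$-convergence of characteristic functions. Thus the minimizers of $F_0$ are precisely the Bayes classifiers.

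Fix a perimeter-minimal Bayes classifier $A^\ast$. I would read the source condition \labelcref{eq:source_condition} as certifying the existence of a competitor sequence $A^\ast_\eps$ with $\chi_{A^\ast_\eps}\to\chi_{A^\ast}$ in $L^1$, $\Per_\eps(A^\ast_\eps;\rho)\to \Per(A^\ast;\rho)$, and $F_0(A^\ast_\eps)-F_0(A^\ast)=o(\eps)$ (smoothness of $\partial A^\ast$ permits taking $A^\ast_\eps\equiv A^\ast$ and verifying the perimeter limit by direct calculation). Inserting $A^\ast_\eps$ into the minimality of $A_\eps$ yields, after using $F_0(A_\eps)\geq F_0(A^\ast)$,
\begin{equation*}
    0 \leq F_0(A_\eps) - F_0(A^\ast) \leq \bigl[F_0(A^\ast_\eps)-F_0(A^\ast)\bigr] + \eps\bigl[\Per_\eps(A^\ast_\eps;\rho) - \Per_\eps(A_\eps;\rho)\bigr].
\end{equation*}
This simultaneously bounds $\Per_\eps(A_\eps;\rho)$ uniformly in $\eps$ and forces $F_0(A_\eps)\to F_0(A^\ast)=\min F_0$.

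The uniform perimeter bound triggers \cref{thm:gammaCompact}, producing a subsequence $\eps_k\to 0$ and a set $A$ of finite perimeter with $\chi_{A_{\eps_k}}\to\chi_A$ in $L^1(\Omega)$. Continuity of $F_0$ then gives $F_0(A)=\lim_k F_0(A_{\eps_k})=\min F_0$, so $A$ is itself a Bayes classifier. The $\Gamma$-liminf inequality from \cref{thm:gamma}, combined with the comparison just derived, yields
\begin{equation*}
    \Per(A;\rho) \leq \liminf_{k\to\infty}\Per_{\eps_k}(A_{\eps_k};\rho) \leq \limsup_{k\to\infty}\Per_{\eps_k}(A^\ast_{\eps_k};\rho) = \Per(A^\ast;\rho).
\end{equation*}
Since $A^\ast$ minimizes the perimeter over the Bayes class and $A$ lies in the same class, the reverse inequality is automatic, so $A$ solves \labelcref{eq:perimeter_minimal_solution}.

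The decisive obstacle is the first step: one needs a competitor for $A^\ast$ for which the fidelity gap is small \emph{relative to} $\eps$. The $\Gamma$-limsup of \cref{thm:gamma} alone only delivers $\chi_{A^\ast_\eps}\to\chi_{A^\ast}$ in $L^1$ with matching perimeter, giving no quantitative control on $F_0(A^\ast_\eps)-F_0(A^\ast)$. The source condition \labelcref{eq:source_condition} is precisely the extra hypothesis that upgrades this to the order needed above; everything else is a soft combination of continuity of $F_0$, compactness, and $\Gamma$-convergence.
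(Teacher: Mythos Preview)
Your proof is correct and follows essentially the same approach as the paper: the paper packages the argument through the rescaled functional $J_\eps(A) = \eps^{-1}\bigl(F_0(A)-\min F_0\bigr) + \Per_\eps(A;\rho)$ and its $\liminf$ lemma, whereas you unpack the same inequalities directly in terms of $F_0$ and $\Per_\eps$. The structure---use \labelcref{eq:source_condition} to produce a competitor with $o(\eps)$ fidelity gap, derive a uniform perimeter bound and invoke \cref{thm:gammaCompact}, then apply the $\Gamma$-liminf from \cref{thm:gamma} against the recovery sequence for the perimeter-minimal Bayes classifier---is identical.
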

\ksr{
\begin{example}\label{ex:bayes}
Another one-dimensional example highlights the effect of the anisotropy for the statement of \cref{thm:convergence_AT}.
For this let $\Omega:=(-2,2)$ and define the piecewise constant densities
\begin{align*}
    \rho_0(x) :=
    \begin{cases}
        \frac{3}{16}\quad&x\in(-2,-1),\\
        \frac{2}{16}\quad&x\in(-1,1),\\
        0\quad&x\in(1,2),
    \end{cases}
    \qquad
    \rho_1(x) :=
    \begin{cases}
        0\quad&x\in(-2,-1),\\
        \frac{2}{16}\quad&x\in(-1,1),\\
        \frac{5}{16}\quad&x\in(1,2).
    \end{cases}
\end{align*}
Note that all the sets $A\in\{(\alpha,2)\st\alpha\in[-1,1]\}$ minimize the Bayes risk $\mathbb E_{(x,y)\sim\mu}\left[\abs{\chi_A(x)-y}\right]$. 
However, easy computations show that $\Per((\alpha,2);\mathbf\rho)=\frac{4}{16}$ for $\alpha\in(-1,1]$ and $\Per((-1,2);\mathbf\rho)=\frac{3}{16}$.
Hence, $A=(-1,2)$ has the smallest perimeter among these minimizers and hence solves \labelcref{eq:perimeter_minimal_solution}.
In this case the optimal Bayes classifier saturates the entire support of $\rho_1$ at the cost of picking up the small jump $\lim_{x\uparrow -1}\rho_0(x)-\lim_{x\downarrow-1}\rho_0(x)=\frac{1}{16}$.

Note that if one symmetrizes the situation by setting the values of $\rho_0$ on $(-2,1)$ and of $\rho_1$ on $(1,2)$ to $\frac{4}{16}$, all the perimeters are the same. 
So in the previous situation it is the imbalance of the two classes which picks the support of $\rho_1$ as perimeter-minimal Bayes classifier and hence favors class $1$. 
\end{example}}

\cref{thm:gamma} is a direct consequence of \cref{thm:liminf2} for the $\liminf$ inequality and \cref{thm:limsup2} for the $\limsup$ inequality.
To prove \cref{thm:liminf2}, we proceed via a slicing method, which allows us to reduce the argument to an elementary, though technical, treatment of the $\liminf$ inequality in dimension $d=1$. To prove the $\limsup$ inequality in \cref{thm:limsup2}, we apply a density result of De Philippis et al. \cite{dePhilippis2017} to reduce to the case of a regular interface. In this setting, with the target energy in mind, we can locally perturb the interface to recover the appropriate minimum in the definition of $\beta(\nu;\bm{\rho})$ for the given orientation.

As is well known, \cref{thm:gamma} implies convergence of minimization problems involving $\Per_\epsilon(\cdot;\bm{\rho})$ to problems defined in terms of the local perimeter $\Per(\cdot ;\bm{\rho})$ (see, e.g., \cite{braidesGamma2007}).
Further, this result has a couple of important applications which we discuss in \cref{sec:applications}:
Firstly, it immediately implies Gamma-convergence of the corresponding total variation.
Secondly, as seen in \cref{thm:convergence_AT}, it has important implications for the asymptotic behavior of adversarial training \labelcref{eq:AT} as $\eps\to 0$.
Lastly, we can use it to prove Gamma-convergence of discrete perimeters on weighted graphs in the $TL^p$ topology, see \cref{thm:gamma_graph} in \cref{sec:applications}.

Future work will include using the results of this paper to study dynamical versions of adversarial training.
We remark that one way to think of the anisotropy in the limiting energy is that trace discontinuities pick up derivative information (in the spirit of $BV$). 
To recover an effective sharp interface model with anisotropy even for smooth densities, one could then look to the ``gradient flow" of the energy. 
For this one can interpret \labelcref{eq:TV_AT} as first step in a minimizing movement discretization of the gradient flow of the perimeter, where $\eps>0$ is interpreted as time step.
Iterating this and sending $\eps\to 0$ one expects to arrive at a weighted mean curvature flow, depending on the densities $\rho_i$.
Furthermore, preliminary calculations also indicate that the next order Gamma-expansion of the nonlocal perimeter, i.e., the expression $\frac1\eps\big(\Per_\eps(A;\bm{\rho}) - \Per(A;\bm{\rho})\big)$, relies on the gap between the trace values of $\rho_0$ and $\rho_1$ on $\partial A$ which induces anisotropy even for smooth densities.

\subsection{Auxiliary definitions and reformulations of the perimeter}

To work with the perimeter \labelcref{eq:nonlocal_perimeter}, it is convenient to reformulate the energy in a way such that it resembles the thickened sets introduced in the definition of the $d-1$ dimensional Minkowski content \labelcref{eqn:minkowskiContent}.
Recalling that $A^t$ denotes the points of density $t$ in $A$, for our setting, we have the following lemma, which may be directly verified (see also \cite{chambolle2014remark}).
\begin{lemma}\label{lem:properties_perimeter}
The perimeter \labelcref{eq:nonlocal_perimeter} of a measurable set $A\subset\Omega$ admits the following equivalent representation:
\begin{align}
    \Per_\eps(A;\bm{\rho})
    =
    \frac1\eps\rho_0(\lbrace x\in (A^1)^c\st\dist(x,A^1\cap \Omega)<\eps\rbrace)
    +
    \frac1\eps\rho_1(\lbrace x\in (A^0)^c\st\dist(x,A^0 \cap \Omega)<\eps\rbrace)
\end{align}
and furthermore it holds
\begin{align}
    \rho_1(\lbrace x\in (A^0)^c\st\dist(x,A^0\cap \Omega)<\eps\rbrace)
    =
    \rho_1(\lbrace x\in A^1\st\dist(x,(A^1)^c\cap \Omega)<\eps\rbrace),
\end{align}
meaning that $\Per_\eps(A;\bm{\rho})$ can be expressed in terms of $A^1$.
\end{lemma}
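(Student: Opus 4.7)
First I would replace $\chi_A$ with its precise representative $\chi_{A^1}$ throughout \labelcref{eq:nonlocal_perimeter}. Since $\mathcal{L}^d(A\triangle A^1)=0$ by the Lebesgue density theorem, and the densities $\rho_i$ are absolutely continuous with respect to Lebesgue measure, this substitution leaves both integrals in \labelcref{eq:nonlocal_perimeter} unchanged. For the first integrand, $\esssup_{B(x,\eps)\cap\Omega}\chi_{A^1}\in\{0,1\}$ equals $1$ precisely when $\mathcal{L}^d(B(x,\eps)\cap\Omega\cap A^1)>0$, and a short application of the Lebesgue density theorem at points $y\in A^1\cap\Omega$ shows this is equivalent to $\dist(x,A^1\cap\Omega)<\eps$. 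Splitting the integration according to whether $x\in A^1$ (in which case the integrand vanishes since the essential supremum and $\chi_{A^1}(x)$ both equal $1$), the first summand of $\Per_\eps$ reduces to $\frac{1}{\eps}\rho_0(\{x\in (A^1)^c : \dist(x, A^1\cap\Omega)<\eps\})$. An analogous computation for the essential infimum produces $\frac{1}{\eps}\rho_1(\{x\in A^1 : \dist(x, A^0\cap\Omega)<\eps\})$, and this equals $\frac{1}{\eps}\rho_1(\{x\in (A^0)^c : \dist(x, A^0\cap\Omega)<\eps\})$ because $(A^0)^c\setminus A^1 = \Omega\setminus(A^0\cup A^1) =: N$ is Lebesgue null and $\rho_1$ is absolutely continuous.

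For the second identity, the key observation is that every $y\in (A^1)^c\cap\Omega$ satisfies $\dist(y, A^0\cap\Omega)=0$. If $y\in A^0$ this is immediate; if $y\in N$, then $y\notin A^1$ forces the upper density of $A^c$ at $y$ to be strictly positive, so every sufficiently small ball $B(y,r)\subset\Omega$ intersects $A^c$---hence $A^0$---in positive measure, placing points of $A^0\cap\Omega$ arbitrarily close to $y$. By the triangle inequality, any witness $y\in (A^1)^c\cap\Omega$ with $|x-y|<\eps$ can be replaced by a point $z\in A^0\cap\Omega$ with $|x-z|<\eps$, so $\{x:\dist(x,(A^1)^c\cap\Omega)<\eps\}=\{x:\dist(x,A^0\cap\Omega)<\eps\}$ as sets. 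Intersecting with $A^1$ and once more discarding $N$ via absolute continuity of $\rho_1$ yields the claimed equality.

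The essential-sup/inf translations and the cosmetic exchanges based on absolute continuity of the $\rho_i$ are routine. The only mildly delicate point is to ensure that the ``fractional-density'' set $N$ does not obstruct the identification of the two distance sublevel sets; this is handled by the observation that each $y\in N$ is itself a limit of points in $A^0$, so that $N$ may be harmlessly absorbed into $A^0$ when computing $\eps$-distance neighborhoods.
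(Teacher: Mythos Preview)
Your argument is correct and supplies exactly the direct verification the paper omits (the paper states only that the lemma ``may be directly verified'' and cites \cite{chambolle2014remark}). The handling of the null set $N=\Omega\setminus(A^0\cup A^1)$ via the observation that every $y\notin A^1$ has $\mathcal{L}^d(A^c\cap B(y,r))>0$ for all $r>0$---hence $\dist(y,A^0\cap\Omega)=0$---is the right way to show the two distance sublevel sets coincide, and the remaining exchanges between $A^1$ and $(A^0)^c$ are justified by absolute continuity of $\rho_1$.
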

The above formulations provide a clear way to define restricted (localized) versions of the nonlocal perimeter.
For measurable subsets $A$ and $\Omega' \subset\Omega$, we define outer and inner nonlocal perimeters (respectively) of $A$ in $\Omega'$ as
\begin{subequations}\label{eq:outer_inner}
    \begin{align}
    \Per_\eps^0(A;\bm{\rho},\Omega') &:= \frac1\eps\rho_0(\lbrace x\in \Omega'\setminus A^1\st\dist(x,A^1\cap \Omega')<\eps\rbrace),
    \\
    \Per_\eps^1(A;\bm{\rho}, \Omega') &:=
    \frac1\eps\rho_1(\lbrace x\in \Omega'\setminus A^0\st\dist(x,A^0\cap \Omega')<\eps\rbrace),
    \\
    \Per_\eps^i(A;\bm{\rho}) &:= \Per_\eps^i(A;\bm{\rho},\Omega).
\end{align}
\end{subequations}

Note that by definition the monotonicity property $\Per_\eps^i(A;\bm{\rho}, \Omega_1)\leq\Per_\eps(A;\bm{\rho},\Omega_2)$ holds for subsets $\Omega_1\subset \Omega_2\subset\Omega$ and $i\in\{0,1\}$.
Furthermore, we define the restricted nonlocal perimeter of $A$ in $\Omega'$ as the sum
\begin{align}\label{eq:localized_nonlocal_perimeter}
    \Per_\eps(A;\bm{\rho}, \Omega') := \Per_\eps^0(A;\bm{\rho}, \Omega') + \Per_\eps^1(A;\bm{\rho}, \Omega').
\end{align}

\section{Gamma-convergence and compactness}\label{sec:Gamma-convergence}

In this section we will prove that the Gamma-limit of the nonlocal perimeters $\Per_\eps(A;\bm{\rho})$, defined in \labelcref{eq:nonlocal_perimeter}, is given by $\Per(A;\bm{\rho})$, defined in \labelcref{eq:local_perimeter}, thereby completing the proof of \cref{thm:gamma}.

\subsection{Compactness}

We can directly turn to the proof of compactness. The argument adapts the approach introduced in \cite[Theorem 3.1]{chambolle2014remark}, but takes care to account for the fact that the densities $\rho_i$ are allowed to vanish.

\begin{proof}[Proof of \cref{thm:gammaCompact}]
Let us define the following sequences of functions $(u_k)_{k\in\N}$ and $(v_k)_{k\in\N}$ by
\begin{equation}\label{def:uk}
\begin{aligned}
    u_k(x) := \left(1-\frac{\dist(x,A_k^1)}{\eps_k}\right) \vee 0,\qquad
    v_k(x) := \frac{\dist(x,(A_k^1)^c)}{\eps_k} \wedge 1,
    \qquad
    x\in\R^d,\;
    k\in\N.
\end{aligned}
\end{equation}
Here $u_k$ changes value in a small layer outside $A_k$, and similarly, $v_k$ transitions to $1$ inside $A_k$.
Recalling that the gradient of the distance function has norm $1$ almost everywhere outside of the $0$ level-set  (see, e.g., \cite{EvansGariepy}), up to a null-set, these functions satisfy
\begin{align*}
    \abs{\grad u_k} = \frac{1}{\eps_k}\chi_{\{x\in\R^d\st 0<\dist(x,A_k^1)<\eps_k\}},\qquad
    \abs{\grad v_k} = \frac{1}{\eps_k}\chi_{\{x\in\R^d\st 0<\dist(x,(A_k^1)^c)<\eps_k\}}.
\end{align*}
Since 
\begin{align*}
    \{x\in\ksr{\Omega}\st 0<\dist(x,A_k^1)<\eps_k\} &\subset \{x\in\Omega\setminus A^1_k\st \dist(x,A_k^1)<\eps_k\}, \\
    \{x\in\ksr{\Omega}\st 0<\dist(x,A_k^0)<\eps_k\} &\subset \{x\in A^1_k\st \dist(x,(A_k^1)^c)<\eps_k\}
\end{align*}
it follows
\begin{align}\label{ineq:grad_bounded_perimeter}
    \int_\Omega\abs{\grad u_k}\rho_0\de x \leq \Per_{\eps_k}^0(A_k;\bm{\rho})
    ,\qquad
    \int_\Omega\abs{\grad v_k}\rho_1\de x \leq \Per_{\eps_k}^1(A_k;\bm{\rho}).
\end{align}
By the hypothesis $\essinf_\Omega(\rho_0+\rho_1)>0$, we have $\rho_0 + \rho_1 >c_\rho$ almost everywhere in $\Omega$ for some constant $c_\rho>0$.
Hence, the sets $\Omega_i: =\{\rho_i > \delta\}$ cover $\Omega$ for $0<\delta<c_\rho/2$, i.e., $\Omega=\Omega_0\cup\Omega_1$. 
Applying the coarea formula to $\rho_0$ and $\rho_1$, we may choose $\delta$ such that $\Omega_i$ for $i=0,1$ are sets of finite perimeter.

Applying \ksr{the chain rule for $BV$ functions \cite[Theorem 3.96]{ambrosio2000functions} with $f:(y,z)\mapsto yz$}, we find
\begin{align*}
    \TV(u_k\chi_{\Omega_0}) 
    \leq 
    \int_{\Omega_0}\abs{\grad u_k}\de x
    +
    \TV(\chi_{\Omega_0}).
\end{align*}
Note that by construction $\TV(\chi_{\Omega_0})<\infty$.
Using this together with \labelcref{ineq:grad_bounded_perimeter} shows that $u_k \chi_{\Omega_0}$ is bounded uniformly in $BV(\Omega)$.
Similarly, we also have that $v_k \chi_{\Omega_1}$ is bounded uniformly in $BV(\Omega)$.
Consequently, we apply $BV$-compactness (see, e.g., \cite[Theorem 3.23]{ambrosio2000functions}) to both sequences, to find that $u_k \chi_{\Omega_0} \to u$ and $v_k \chi_{\Omega_1} \to v $ in $L^1(\Omega)$ where $u,v \in BV(\Omega)$.

Taking into account that
\begin{align*}
    \delta\int_{\Omega_0}
    \abs{u_k - \chi_{A_k}}\de x
    \leq 
    \int_{\Omega_0}
    \chi_{\ksr{\{x\in\Omega\setminus A^1_k\st \dist(x,A_k^1)<\eps_k\}}}\rho_0\de x
    \leq 
    \eps_k \Per_{\eps_k}^0(A_k)
    \to 0\quad \text{ as }k \to\infty,
\end{align*}
and a similar computation for $\norm{v_k-\chi_{A_k}}_{L^1(\Omega_1)}$, we further have $\chi_{A_k}\to u$ in $L^1(\Omega_0)$ and $\chi_{A_k}\to v$ in $L^1(\Omega_1)$.
Necessarily, it follows that $u = \chi_U $ and $v =  \chi_V $ where $U\subset \Omega_0$ and $V \subset \Omega_1$ are sets with $\chi_U = \chi_V$ in $\Omega_0\cap \Omega_1$. 
Using lower semi-continuity of the total variation we get
\begin{align*}
    \TV(\chi_U) \leq \liminf_{k\to\infty}\TV(u_k\chi_{\Omega_0}) < \infty
\end{align*}
and similarly $\TV(\chi_V)<\infty$, meaning that both sets have finite perimeter in $\Omega$.

Consequently, the set $A := U\cup V$ is of finite perimeter and satisfies $\chi_{A_k}\to \chi_A$ in $L^1(\Omega)$ since $\chi_{U} = \chi_V$ on $\Omega_0\cap\Omega_1$, and therefore
\begin{align*}
    \int_\Omega \abs{\chi_{A_k} - \chi_A}\de x
    &\leq 
    \int_{\Omega_0} \abs{\chi_{A_k} - \chi_U}\de x
    +
    \int_{\Omega_1} \abs{\chi_{A_k} - \chi_V}\de x
    \to 0\quad\text{ as } k\to\infty.
\end{align*}
\end{proof}

\subsection{Liminf bound}

We now prove the associated $\liminf$ bound in the definition of Gamma-convergence for \cref{thm:gamma}. The argument relies on slicing techniques, which allow the general $d$-dimensional case to be reduced to $1$-dimension. To keep this part of the argument relatively self-contained and notationally unencumbered, we perform the slicing argument locally, while remarking that the approach developed in \cite{braidesFreeDiscont} could also be applied. 

In our proof of the $1$-dimensional case, we will need the following auxiliary lemma, which is a direct consequence of Reshetnyak's lower semi-continuity theorem.

\begin{lemma}\label{lem:reshetnyakApplication}
\ksr{Let $(\eps_k)_{k\in\N}\subset(0,\infty)$ be a sequence of numbers with $\eps_k \to 0$.} Consider an interval $I\subset \R$, $\rho_0,\rho_1\in BV(I)$, and a sequence of sets $(A_k)_{k\in \N}$ with $A$ a set of finite perimeter in $I$ such that $\chi_{A_k} \to \chi_{A}$ in $L^1(I)$. It follows that 
\begin{equation}
\liminf_{\ksr{k\to \infty}} \Per_{\epsilon_k}^i (\ksr{A_k}{;\bm{\rho},I})\geq \int_{\partial^* A}\rho^-_i\, d\H^0.
\end{equation}
\end{lemma}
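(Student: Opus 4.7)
The plan is to combine localization around each point of $\partial^\ast A$ with Reshetnyak's lower semi-continuity theorem applied to a Lipschitz approximation of $\chi_{A_k}$. I treat $i=0$, as the case $i=1$ follows analogously from the alternative representation in \cref{lem:properties_perimeter}. Assume $L := \liminf_{\epsilon\to 0}\Per_\epsilon^0(A_\epsilon;\rho,I) < \infty$, since otherwise the inequality is trivial.

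Since $\chi_A \in BV(I)$ with $I \subset \R$, $\partial^\ast A = \{x_1,\dots,x_N\}$ is a finite set. Choose pairwise disjoint open balls $U_j = B(x_j, \delta_j) \subset I$ with $\partial^\ast A \cap U_j = \{x_j\}$. Using the representation $\Per_\epsilon^0(A_\epsilon;\rho,U) = \tfrac{1}{\epsilon}\rho_0(V_\epsilon(U))$ with $V_\epsilon(U) := \{x \in U \setminus A_\epsilon^1 \st \dist(x, A_\epsilon^1 \cap U) < \epsilon\}$ and the inclusion $V_\epsilon(U_j) \subset V_\epsilon(I) \cap U_j$, the sets $V_\epsilon(U_j)$ are pairwise disjoint inside $V_\epsilon(I)$, yielding the superadditivity bound
\[
    \Per_\epsilon^0(A_\epsilon;\rho,I) \geq \sum_{j=1}^N \Per_\epsilon^0(A_\epsilon;\rho,U_j).
\]
Thus it suffices to prove the local lower bound $\liminf_{\epsilon\to 0}\Per_\epsilon^0(A_\epsilon;\rho,U_j) \geq \rho_0^-(x_j)$ for each $j=1,\dots,N$.

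The local bound is trivial when $\rho_0^-(x_j) = 0$. When $\rho_0^-(x_j) > 0$, lower semi-continuity of $\rho_0^-$ (which in 1D follows from the existence of one-sided limits for $BV$ functions) together with the pointwise inequality $\rho_0 \geq \rho_0^-$ allows me to shrink $U_j$ so that $\rho_0 \geq c > 0$ almost everywhere on $U_j$. I then introduce the Lipschitz approximation
\[
    u_{\epsilon,j}(x) := \max\bigl(0,\, 1 - \dist(x, A_\epsilon^1 \cap U_j)/\epsilon\bigr), \qquad x \in U_j,
\]
which satisfies $|\grad u_{\epsilon,j}| = \tfrac{1}{\epsilon}\chi_{V_\epsilon(U_j)}$ almost everywhere, so $\Per_\epsilon^0(A_\epsilon;\rho,U_j) = \int_{U_j}\rho_0\,|\grad u_{\epsilon,j}|\de x$. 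The lower bound on $\rho_0$ yields $|V_\epsilon(U_j)|/\epsilon \leq L/c$, so the sequence $u_{\epsilon,j}$ is bounded in $BV(U_j)$; comparing with $\chi_{A_\epsilon^1}$ and using $A_\epsilon \to A$ in $L^1(I)$ shows that $u_{\epsilon,j} \to \chi_A$ in $L^1(U_j)$.

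Finally, I apply Reshetnyak's lower semi-continuity theorem (see, e.g., \cite[Theorem 2.38]{ambrosio2000functions}) with the non-negative, lower semi-continuous weight $\rho_0^-$:
\[
    \int_{U_j}\rho_0^-\de|D\chi_A| \leq \liminf_{\epsilon\to 0}\int_{U_j}\rho_0^-\,|\grad u_{\epsilon,j}|\de x = \liminf_{\epsilon\to 0}\Per_\epsilon^0(A_\epsilon;\rho,U_j),
\]
where the equality uses $\rho_0^- = \rho_0$ $\mathcal{L}^1$-almost everywhere. Since $|D\chi_A|$ restricted to $U_j$ is the point mass $\delta_{x_j}$, the left-hand side equals $\rho_0^-(x_j)$, which closes the local bound. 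The principal technical points are (i) the construction of the Lipschitz approximation so that its gradient measure recovers the nonlocal perimeter integrand, and (ii) the careful localization to regions where $\rho_0$ is bounded away from zero, which is precisely the regime in which the limit integrand $\rho_0^-$ is positive.
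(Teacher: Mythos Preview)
Your proposal is correct and follows essentially the same route as the paper: both construct the Lipschitz surrogate $u_\epsilon(x) = (1-\dist(x,A_\epsilon^1)/\epsilon)_+$, restrict attention to a region where $\rho_0$ is bounded away from zero so that $u_\epsilon \to \chi_A$ in $L^1$ there, and then invoke Reshetnyak's lower semi-continuity with the lower semi-continuous weight $\rho_0^-$. The only cosmetic difference is that the paper carries this out globally on the super-level sets $\{\rho_0>\delta\}$ and sends $\delta\to 0$, whereas you first localize around each of the finitely many points of $\partial^\ast A$; two small write-up points to tighten are that your displayed equality $\Per_\epsilon^0(A_\epsilon;\rho,U_j) = \int_{U_j}\rho_0\,|\nabla u_{\epsilon,j}|\,\de x$ is really an inequality ``$\geq$'', and the bound $|V_\epsilon(U_j)|/\epsilon \le L/c$ should be phrased along a subsequence realizing the (local) $\liminf$.
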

\begin{proof}
We restrict our attention to $i = 0$. Note, up to an equivalent representative, we may assume that $\rho_0 = \rho^-_0$ is a lower semi-continuous function defined everywhere (see \cite[Section 3.2]{ambrosio2000functions}). Recall the function $u_k$ introduced in the proof of \cref{thm:gammaCompact} in \labelcref{def:uk} for which we have
\begin{equation}\label{eqn:subEnergyRel}
\Per_{\ksr{\epsilon_k}}^0 (A_{\ksr{k}}{;\bm{\rho},I}) \geq \int_{I}|\nabla u_k |\rho_0\, \de x.
\end{equation} As $u_k \to \chi_A$ in $L^1(\{x:\rho_0>\delta\})$ for all $\delta > 0$, we may apply the Reshetnyak lower semi-continuity theorem \cite[Theorem 1.7]{spector2011simple} in each open set $\{x\in I:\rho_0>\delta\}$ to find
\begin{equation}\nonumber
\liminf_{k\to \infty}\int_{\{x\in I\st\rho_0>\delta\}}|\nabla u_k |\rho_0 \, \de x\geq \int_{\{x\in I\st\rho_0>\delta\}}\rho_0 \, d|D\chi| = \int_{\partial^* A\cap \{x\in I\st\rho_0>\delta\}}\rho^-_0\, d\H^0.
\end{equation}
Letting $\delta \to 0$ and applying \labelcref{eqn:subEnergyRel} concludes the result.
\end{proof}

\begin{theorem}\label{thm:liminf2}
Let $\Omega\subset \R^d$ be an open, bounded subset with Lipschitz boundary. Assume that $\rho_0,\rho_1$ belong to $ BV(\Omega) \cap L^\infty (\Omega)$.
Let $A\subset\R^d$ be a subset and $\{A_k\}_{k\in\N}\subset\R^d$ be a sequence of sets such that $\chi_{A_k}\to \chi_A$ in $L^1(\Omega)$ and $\chi_A \in BV_{loc}(\Omega)$.
Let $\{\eps_k\}_{k\in\N}\subset(0,\infty)$ be a sequence of numbers with $\eps_k \to 0$.
Then it holds that
\begin{align*}
   \Per(A;\bm{\rho})
   \leq \liminf_{k\to\infty}\Per_{\eps_k}(A_k;\bm{\rho}).
\end{align*}
\end{theorem}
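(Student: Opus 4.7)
The plan is to prove the liminf via a blow-up and slicing scheme: extract a weak-$\ast$ measure limit of the energies, reduce to a density bound on $\partial^\ast A$ by Besicovitch differentiation, and then establish the density bound by slicing to a one-dimensional lower bound that respects the coupling between the $\rho_0$- and $\rho_1$-contributions.

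\textbf{Setup and reduction to 1D.} Assume $\limsup_k \Per_{\eps_k}(A_k;\rho)<\infty$ and introduce the nonnegative Radon measures
\begin{align*}
\mu_k := \frac{\rho_0\,\chi_{E_k^0}}{\eps_k}\,\L^d + \frac{\rho_1\,\chi_{E_k^1}}{\eps_k}\,\L^d,
\end{align*}
with $E_k^i$ as in \labelcref{eq:outer_inner}, so that $\mu_k(\Omega)=\Per_{\eps_k}(A_k;\rho)$ is uniformly bounded. Pass to a weak-$\ast$ subsequential limit $\mu_k\wsto\mu$. It suffices to prove $\mu\ge\beta(\nu_A;\rho)\,\H^{d-1}\restr(\partial^\ast A\cap\Omega)$, which by Besicovitch differentiation amounts to showing $\liminf_{r\to 0^+}r^{1-d}\mu(Q_\nu(x_0,r))\ge\beta(\nu;\rho)(x_0)$ for $\H^{d-1}$-a.e.\ $x_0\in\partial^\ast A$ with $\nu=\nu_A(x_0)$. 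Since Euclidean distance is dominated by 1D distance along any line in direction $\nu$, Fubini gives
\begin{align*}
\mu_k(Q_\nu(x_0,r))\ge\int_{Q'(x_0,r)}\Per_{\eps_k}^{\mathrm{1D}}(A_{k,y'};\rho_{y'},I)\,\de y',\qquad I=\big(-\tfrac{r}{2},\tfrac{r}{2}\big),
\end{align*}
with $A_{k,y'},\rho_{y'}$ the 1D slices. Standard $BV$-slicing plus a diagonal extraction provides, for $\L^{d-1}$-a.e.\ $y'$, $L^1$-convergence $\chi_{A_{k,y'}}\to\chi_{A_{y'}}$ to a half-interval, and identification of the 1D traces of $\rho_{i,y'}$ at the 1D boundary point with the $d$-dimensional traces $\rho_i^{\pm\nu}$ on $\partial^\ast A$.

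\textbf{The 1D liminf (main obstacle).} For $A_\eps\to A_0=(-1,0)$ in $L^1(I)$ with $I=(-1,1)$ and $\rho_i\in BV(I)\cap L^\infty(I)$, the key claim is $\liminf_\eps\Per_\eps(A_\eps;\rho,I)\ge\beta(\nu;\rho)(0)$. Applying \cref{lem:reshetnyakApplication} separately to $\Per_\eps^0$ and $\Per_\eps^1$ only produces the weaker bound $\min(\rho_0^\nu,\rho_0^{-\nu})+\min(\rho_1^\nu,\rho_1^{-\nu})$, so the coupling must be exploited: $\Per_\eps^0$ integrates $\rho_0$ over the $\eps$-layer just \emph{outside} $A_\eps^1$, while $\Per_\eps^1$ integrates $\rho_1$ over the $\eps$-layer just \emph{inside} $A_\eps^1$, and these two layers lie on opposite sides of $\partial A_\eps$. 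Fix $\delta>0$ and pick $\eta>0$ so that each $\rho_i$ is within $\delta$ of its one-sided trace at $0$ on $(-\eta,0)$ and on $(0,\eta)$. Splitting the two $\eps$-layers according to which side of $0$ they lie in, with Lebesgue measures $L^\pm,M^\pm$, yields
\begin{align*}
\Per_\eps(A_\eps)\ge(\rho_0^\nu-\delta)\tfrac{L^-}{\eps}+(\rho_0^{-\nu}-\delta)\tfrac{L^+}{\eps}+(\rho_1^\nu-\delta)\tfrac{M^-}{\eps}+(\rho_1^{-\nu}-\delta)\tfrac{M^+}{\eps}.
\end{align*}
A case analysis on the location of the dominant boundary component of $A_\eps^1$ relative to $0$ (well to the right, well to the left, or within distance $\eps$ of $0$), combined with the opposite-side constraint, shows the right-hand side is bounded below by a convex combination of $\rho_0^\nu+\rho_1^\nu$, $\rho_0^{-\nu}+\rho_1^{-\nu}$, and $\rho_0^{-\nu}+\rho_1^\nu$, hence by $\beta(\nu;\rho)(0)-O(\delta)$. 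Note the systematic \emph{absence} of the combination $\rho_0^\nu+\rho_1^{-\nu}$, which would require $\Per_\eps^0$'s layer to lie inside $A_\eps$ and $\Per_\eps^1$'s outside---contrary to the definitions. Sending $\delta\to 0$ completes the 1D step.

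\textbf{Conclusion.} Applying Fatou and the 1D bound on a.e.\ slice, then integrating via the coarea formula for $\partial^\ast A$, produces $\liminf_k\mu_k(Q_\nu(x_0,r))\ge\int_{\partial^\ast A\cap Q_\nu(x_0,r)}\beta(\nu;\rho)\,\de\H^{d-1}$. Dividing by $r^{d-1}$ and letting $r\to 0$ delivers the density bound at $\H^{d-1}$-a.e.\ $x_0$; Besicovitch differentiation then gives $\mu\ge\beta(\nu_A;\rho)\,\H^{d-1}\restr(\partial^\ast A\cap\Omega)$, and the weak-$\ast$ lower semicontinuity $\mu(\Omega)\le\liminf_k\mu_k(\Omega)$ yields $\Per(A;\rho)\le\liminf_k\Per_{\eps_k}(A_k;\rho)$.
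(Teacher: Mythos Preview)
Your overall architecture matches the paper's: localize on $\partial^\ast A$, slice in the direction $\nu$, and reduce to a one-dimensional liminf bound that must respect the coupling between the $\rho_0$- and $\rho_1$-layers. The paper uses Morse covering directly on $\Per_{\eps_k}$ while you pass through a weak-$\ast$ limit measure and Besicovitch differentiation; these are interchangeable, and your Fubini/Fatou passage to the slice problem is the same as the paper's Step~2.

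The gap is in your 1D step. You write down the decomposition
\[
\Per_\eps(A_\eps)\ge(\rho_0^\nu-\delta)\tfrac{L^-}{\eps}+(\rho_0^{-\nu}-\delta)\tfrac{L^+}{\eps}+(\rho_1^\nu-\delta)\tfrac{M^-}{\eps}+(\rho_1^{-\nu}-\delta)\tfrac{M^+}{\eps}
\]
and then assert that ``a case analysis on the location of the dominant boundary component'' forces the right-hand side to be a convex combination of the three admissible trace pairs. Two things are missing. First, the notion of a \emph{dominant} boundary component is undefined: $A_\eps^1\cap(-\eta,\eta)$ may have arbitrarily many components, and there is no a priori reason the four masses $L^\pm,M^\pm$ should sum to something fixed, so calling the outcome a ``convex combination'' is not justified. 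Second, even granting a single transition at $t$, you have not shown each layer carries at least $\eps$ of mass within $(-\eta,\eta)$; without that, the linear form above could be small for all four coefficients simultaneously. The paper handles both issues by arguing via contradiction: it first uses \cref{lem:reshetnyakApplication} (which you cite but then abandon) to secure $\liminf\Per_\eps^i\ge\rho_i^-$ regardless of component structure, pins down the only possible configuration $\rho_0^\nu<\rho_0^{-\nu}$, $\rho_1^{-\nu}<\rho_1^\nu$, and then extracts from $\limsup\Per_\eps^1<\rho_1^\nu-\delta$ the existence of a point $x_\eps\in A_\eps^1\cap(0,\eta)$, which forces $\Per_\eps^0\ge\rho_0^{-\nu}-\eta$ and closes the contradiction. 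Your direct approach can likely be repaired, but you would need an explicit lower bound on $L^-+L^+$ and $M^-+M^+$ (this is where the Reshetnyak-type argument enters) together with a genuine argument that the opposite-side constraint rules out $\rho_0^\nu+\rho_1^{-\nu}$ when components may accumulate; as written, the case analysis is a heuristic, not a proof.
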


\begin{proof}

For notational convenience, we suppress dependence on $\bm{\rho}$ and write $\Per_\eps(A)$ instead of $\Per_\eps(A;\bm{\rho})$. Likewise, we consider $\epsilon\to 0,$ with the knowledge that this refers to a specific subsequence.
Furthermore we define $\nu:=\frac{D\chi_A}{\abs{D\chi_A}}$ and write $\beta_\nu$ instead of $\beta\left(\frac{D\chi_A}{\abs{D\chi_A}};\bm{\rho}\right)$.

We split the proof into two steps. In Step 1, we show that the result holds in dimension $d=1.$ In this setting, a good representative of a $BV$ functions possesses one sided limits everywhere, which will effectively allow us to reduce to the consideration of densities given by Heaviside functions and an elementary analysis. To recover the $\liminf$ in general dimension, in Step 2, we use a slicing and covering argument along with fine properties of both $BV$ functions and sets of finite perimeter.  

\textbf{Step 1: Dimension $d=1$.} We may without loss of generality suppose that $\Omega$ is a single connected open interval and $\chi_A \in BV(\Omega ; \{0,1\})$. 
Consequently, the reduced boundary $\partial^* A$ of $A$ is a finite set. Supposing $x_0 \in \partial^* A $, we show for sufficiently small $\eta$ that 
\begin{equation}\label{eqn:LSC1d}
\liminf_{k\to\infty}\Per_{\eps_k}(A_k; x_0 + (-\eta,\eta))\geq  \beta_\nu(x_0).
\end{equation}
With this in hand, one can cover each element of $\partial^* A$ by pairwise-disjoint neighborhoods to apply the inequality \labelcref{eqn:LSC1d} to conclude the theorem in the case $d=1$.

We first suppose without loss of generality that $x_0 = 0$, $\nu(x_0) = -1,$ and that for any $\eta \leq \eta_0$ we have $\partial^* A \cap (-\eta, \eta) = \{0\}$. Up to choosing a smaller $\eta_0,$ we proceed by contradiction and suppose that there is a subsequence such that
\begin{equation}\label{eqn:contraLSC}
\lim_{\epsilon \to 0}\Per_{\eps}(A_\epsilon;(-\eta,\eta)) <\beta_\nu(0)
\end{equation}
for all $\eta \leq \eta_0.$
Applying \cref{lem:reshetnyakApplication}, it follows that 
\begin{equation}\label{eqn:resh1d}
\liminf_{\epsilon \to 0} \Per_\epsilon^i (A_\epsilon{;(-\eta,\eta)})\geq \min\{\rho_i^{-\nu} , \rho_i^{\nu}\}(0).
\end{equation}
This implies
\begin{align}\label{ineq:contradict_chain}
\begin{split}
\min\{\rho_0^{+\nu} + \rho_1^{+\nu} ,\,\rho_0^{-\nu} + \rho_1^{-\nu},\, \rho_0^{-\nu} + \rho_1^{\nu}\}(0)
&=
\beta_\nu(0)
\\
&>
\liminf_{\epsilon \to 0} \Per_\epsilon (A_\epsilon{;(-\eta,\eta)})
\\
&\geq 
\sum_{i=0}^1
\liminf_{\epsilon \to 0} \Per^i_\epsilon (A_\epsilon{;(-\eta,\eta)})
\\
&\geq 
\min\{\rho_0^{-\nu} , \rho_0^{\nu}\}(0)
+
\min\{\rho_1^{-\nu} , \rho_1^{\nu}\}(0).
\end{split}
\end{align}
Checking all the cases for the two minima on the right hand side shows that we must have
\begin{equation}\label{eqn:optimalRho}
    \rho_0^{\nu}(0) < \rho_0^{-\nu}(0)\quad \text{ and }\quad \rho_1^{-\nu}(0) < \rho_1^{\nu}(0) .
\end{equation}
and
\begin{align*}
    \rho_0^{\nu}(0) + \rho_1^{-\nu}(0) <  \beta_\nu(0).
\end{align*}
Using \labelcref{eqn:optimalRho}, we see that 
$$\rho_0^\nu(0) + \rho_1^\nu(0) < \rho_0^{-\nu}(0) + \rho_1^{\nu}(0) \quad \text{ and }\quad  \rho_0^{-\nu}(0) + \rho_1^{-\nu}(0) < \rho_0^{-\nu}(0) + \rho_1^{\nu}(0),$$
and so, without loss of generality, we may suppose that 
\begin{equation}\label{eqn:betaChoice}
\beta_\nu(0) = \rho_0^\nu(0) + \rho_1^\nu(0).
\end{equation}
Using \labelcref{eqn:betaChoice} inside \labelcref{eqn:contraLSC} and applying \labelcref{eqn:resh1d} with the minimum identified by \labelcref{eqn:optimalRho}, we have that 
\begin{align*}
    \rho_0^\nu(0) + \rho_1^\nu(0) 
    &=
    \beta_\nu(0)
    \\
    &>
    \liminf_{\epsilon \to 0} \Per_\epsilon^0(A_\epsilon;(-\eta,\eta)) + \limsup_{\epsilon \to 0}\Per_\epsilon^1(A_\epsilon;(-\eta ,\eta))
    \\
    &\geq 
    \rho_0^{\nu}(0) + \limsup_{\epsilon \to 0}\Per_\epsilon^1(A_\epsilon;(-\eta ,\eta)),    
\end{align*}
thereby showing that 
\begin{equation}\label{eqn:smallPer1}
\limsup_{\epsilon \to 0}\Per_\epsilon^1(A_\epsilon;(-\eta ,\eta)) < \rho_1^\nu(0) - \delta
\end{equation}
for some $\delta >0.$
As \labelcref{eqn:contraLSC} and \labelcref{eqn:smallPer1} are unaffected by choosing $\eta$ smaller, we now restrict $\eta$ to be $\eta<\delta /4$ and sufficiently small such that, for $i=0,1$, the one-sided limits are approximately satisfied, precisely,
\begin{equation}\label{eqn:oneSideCont}
\rho_i^{\pm \nu}(0)-\eta<\rho_i(x) \quad  \text{ for }\quad \pm x <0  \text{ and } x\in (-\eta,\eta).
\end{equation}
Using \cref{lem:properties_perimeter} it holds
\begin{equation}\label{eqn:perimRep}
\begin{aligned}
\Per^0_\epsilon(A_\epsilon) &= \frac{1}{\epsilon}\rho_{0}(\{x\in \Omega\setminus A^1_\epsilon \st \dist (x,A^1_\epsilon)<\epsilon\}), \\
\Per^1_\epsilon(A_\epsilon) &= \frac{1}{\epsilon}\rho_{1}(\{x\in A^1_\epsilon \st \dist (x,\Omega\setminus A^1_\epsilon)<\epsilon\}),
\end{aligned}
\end{equation}
with the key point being that $\Per_\eps^1$ can also be expressed in terms of the same underlying set $A^1_\epsilon$.
Using the above representation, by \labelcref{eqn:smallPer1} and \labelcref{eqn:oneSideCont}, for all $\epsilon$ sufficiently small, there is $x_\epsilon \in A_\epsilon^1$ such that $x_\epsilon \in (0,\eta).$
As $\chi_{A_\epsilon} \to \chi_A$ in $L^1(-\eta,\eta),$ we have that $|(A_\epsilon^1)^c\cap (0,\eta)| >\eta /2$ for all sufficiently small $\epsilon.$ Using $x_\epsilon$, the representation \labelcref{eqn:perimRep}, and \labelcref{eqn:oneSideCont},
it follows that $$ \Per^0_\epsilon(A_\epsilon) \geq \rho_0^{-\nu}(0) -\eta.$$
Using this in \labelcref{eqn:contraLSC} and applying \labelcref{eqn:resh1d} for $i=1$ with minimum determined via \labelcref{eqn:optimalRho}, we have
$$\beta_\nu(0) > \liminf_{\epsilon\to 0}\Per_\epsilon(A_\epsilon;(-\eta_0,\eta_0))\geq \rho_0^{-\nu}(0) -\eta + \rho_1^{-\nu}(0)$$ for all $\eta >0.$
Taking $\eta \to 0$, we have
$$\beta_\nu(0) >\liminf_{\epsilon\to 0}\Per_\epsilon(A_\epsilon;(-\eta_0,\eta_0))\geq \rho_0^{-\nu}(0) + \rho_1^{-\nu}(0),$$ contradicting the definition of $\beta_\nu$ in \labelcref{eq:beta_nu}.

\textbf{Step 2: Dimension $d> 1$.}
For any $\eta > 0$, we show that for $\H^{d-1}$-almost every $x_0\in \partial^* A$ there is $r_0 : =r_0(x_0,\eta)>0$ such that for every $r<r_0$ then 
\begin{equation}\label{eqn:cubeAlmostLiminf}
\int_{\partial^* A \cap Q_{\nu(x_0)}(x_0,r)} (\beta_\nu -\eta)\de\H^{d-1} \leq \liminf_{\epsilon\to 0} \Per_\epsilon(A_\epsilon;Q_{\nu(x_0)}(x_0,r)),
\end{equation}
where we recall that $Q_\nu(x_0,r)$ is a cube oriented along $\nu$.

Supposing, we have proven this, we may apply the Morse covering theorem \cite{fonseca2007modern} to find a countable collection of disjoint cubes $\{Q_{\nu(x_i)}(x_i,r_i)\}_{i\in \N}$ satisfying \labelcref{eqn:cubeAlmostLiminf} and covering $\partial^* A$ up an $\mathcal{H}^{d-1}$-null set. Directly estimating, we find that
$$ \liminf_{\epsilon\to 0} \Per_\epsilon(A_\epsilon) \geq \sum_{i<n}\liminf_{\epsilon\to 0} \Per_\epsilon(A_\epsilon;Q_{\nu(x_0)}(x_0,r)) \geq \int_{\partial^* A \cap \bigcup_{i<n}Q_{\nu(x_0)}(x_0,r)}(\beta_\nu -\eta)\de\H^{d-1}.$$
Taking $n \to \infty$ and then $\eta \to 0$ concludes the theorem.

Turning now to prove \labelcref{eqn:cubeAlmostLiminf}, we apply the De Giorgi structure theorem to conclude that up to a $\H^{d-1}$-null set, we have
$$\partial^* A = \bigcup_{i\in \N} K_i,$$ where $K_i$ is a subset of $C^1$ manifold. 
Consequently, for $\H^{d-1}$-almost everywhere $x_0\in \partial^* A$, we have that 
there is $i\in \N$ such that the density relations
\begin{equation}\label{eqn:densityRelprecur}
\lim_{r\to 0} \frac{\H^{d-1}(\partial^* A \cap Q_{\nu(x_0)}(x_0,r))}{r^{d-1}} = \ksr{\lim_{r\to 0}}\frac{\H^{d-1}( K_i \cap Q_{\nu(x_0)}(x_0,r))}{r^{d-1}} = 1  
\end{equation}
hold and the normals are aligned with $$\nu(x_0) = \nu_{K_i}(x_0).$$
Fixing $x_0$ as above, choose a scale $r_0$ such that the density relations in \labelcref{eqn:densityRelprecur} hold up to error $\eta,$ precisely,
\begin{equation}\label{eqn:densityRel}
\left|\frac{\H^{d-1}(\partial^* A \cap Q_{\nu(x_0)}(x_0,r))}{r^{d-1}}-1\right| + \left|\frac{\H^{d-1}(K_i \cap Q_{\nu(x_0)}(x_0,r))}{r^{d-1}}-1\right| \leq \eta \quad \text{ for all }r<r_0,
\end{equation}
and such that
\begin{equation}\label{eqn:nuCont}
  \|\nu_{K_i}(x) - \nu_{K_i}(x_0)\|\leq \eta\quad  \text{ for }x\in Q_{\nu(x_0)}(x_0,r_0)\cap K_i.
\end{equation}
After some algebraic manipulation, one sees that \labelcref{eqn:densityRel} implies
\begin{equation}\label{eqn:densityRel2}
\H^{d-1}(\partial^*A\setminus K_i \cap Q_\nu(x_0)(x_0,r))\leq 2\frac{\eta}{1-\eta} \H^{d-1}(\partial^*A\cap Q_\nu(x_0)(x_0,r)).
\end{equation}
Without loss of generality, we may assume $x_0  = 0$ and $\nu_{\partial^*A} (0) =e_d$.
 
 We perform a slicing argument. For notational convenience, we choose $A_\epsilon$ to be given by the equivalent representative $A^1_\epsilon$, allowing us to use the representation \labelcref{eqn:perimRep} without writing the superscript. 
 Defining 
\begin{equation}\label{eqn:Ay}
A^{y'} : = \{x_d\in \R \st (y',x_d) \in A\},
\end{equation}
note that for any set
\begin{equation}\label{eqn:subsetSliceRel}
\{y'\} \times \{x_d \in A^{y'} \st \dist(x_d , (A^{y'})^c)<\epsilon\} \subset \{x \in A \st \dist (x,A^c)<\epsilon\}\cap \{x' = y'\},
\end{equation}
where the distance is directly in $\R$ for the first set.
We now use the representation \labelcref{eqn:perimRep}, Fubini's theorem with $x = (x',x_d)$, and \labelcref{eqn:subsetSliceRel} to estimate
\begin{equation}\nonumber
\begin{aligned}
&\Per_\epsilon(A_\epsilon;Q(0,r)) 
=
\frac{1}{\eps}
\int_{Q(0,r) \cap \{x\in (A_\epsilon)^c\st \dist (x,A_\epsilon)<\epsilon\}}\rho_0 \de x 
+ 
\frac{1}{\eps}
\int_{Q(0,r)\cap \{x\in A_\epsilon\st \dist (x,(A_\epsilon)^c)<\epsilon\}}\rho_1 \de x \\
& \geq 
\int_{Q'(0,r)} \left(\frac{1}{\eps}\int_{(-r/2,r/2) \cap \{x_d\in (A_\epsilon^{x'})^c\st \dist (x,A_\epsilon^{x'})<\epsilon\}}\rho_0 \de x_d + \frac{1}{\eps}\int_{(-r/2,r/2)\cap \{x_d\in A_\epsilon\st \dist (x_d,(A_\epsilon^{x'})^c)<\epsilon\}}\rho_1 \de x_{d}\right) \de x'.
\end{aligned}
\end{equation}
\ksr{
Applying Fatou's lemma and Step 1 in the previous inequality, we have
\begin{equation}\label{eqn:almostFin}
\begin{aligned}
\liminf_{\epsilon\to 0} \Per_\epsilon  (A_\epsilon;Q(0,r)) & \geq \int_{Q'(0,r)}  \left(\int_{\partial^* A \cap \{x\st x_d \in (-r/2,r/2)\}} \beta_{\nu_{A^{x'}}} \de\H^{0}\right) \de x'.
\end{aligned}
\end{equation}
We note for almost every $x'\in Q'(0,r)$, if $x_d\in \partial^* A^{x'}$, then $x = (x',x_d)\in \partial^* A$ and $\langle (0,\nu_{A^{x'}}(x_d)), \nu(x)\rangle>0$ by \cite[Theorem 3.108 (a) and (b)]{ambrosio2000functions} applied to $\chi_A$. It then follows from \cite[Theorem 3.108 \ksr{(b)}]{ambrosio2000functions} applied to $\rho_0$ and $\rho_1$ that $ \beta_{\nu_{A^{x'}}}(x_d) = \beta_{\nu}(x)$ for $\de x' \otimes \mathcal{H}^0$-a.e. $x= (x',x_d).$
Using this, and subsequently the coarea formula \cite{ambrosio2000functions}, \labelcref{eqn:nuCont}, \labelcref{eqn:densityRel2}, and the $L^\infty$ bound on the densities ($\norm{\rho_i}_{L^\infty}\leq C$), we have
\begin{equation}\nonumber
\begin{aligned}
\int_{Q'(0,r)}  \left(\int_{\partial^* A \cap \{x\st x_d \in (-r/2,r/2)\}} \beta_{\nu_{A^{x'}}} \de\H^{0}\right) \de x' & \geq \int_{Q'(0,r)}  \left(\int_{K_i \cap \{x\st x_d \in (-r/2,r/2)\}} \beta_{\nu} \de\H^{0}\right) \de x' \\
& = \int_{Q(0,r) \cap K_i} \beta_\nu |\langle \nu_{K_i},e_d \rangle| \de\H^{d-1} \\
& \geq \int_{Q(0,r) \cap K_i} \beta_\nu (1-\eta) \de\H^{d-1} \\
& \geq  \int_{Q(0,r) \cap \partial^* A} (\beta_\nu - C\eta) \de\H^{d-1},
\end{aligned}
\end{equation}
which together with \labelcref{eqn:almostFin} concludes \labelcref{eqn:cubeAlmostLiminf} and the theorem.}\end{proof}

\subsection{Limsup bound}

In this section, we show that for a given target classification region $A\subset \Omega$, we can construct a recovery sequence with the optimal asymptotic energy. The result is precisely stated in the following theorem.

\begin{theorem}\label{thm:limsup2}
Let the hypotheses of \cref{thm:liminf2} hold. For any measurable set $A \subset \Omega$ and sequence $(\epsilon_k)_{k\in \N}$ with $\epsilon_k\to 0$ as $k\to \infty$, there is a sequence of sets $A_{k}$ such that $\chi_{A_k}\to \chi_A$ in $L^1(\Omega)$ and the following bound holds
$$\limsup_{k\to \infty}\Per_{\epsilon_k} (A_k ;\bm{\rho}) \leq \Per(A;\bm{\rho}).$$
\end{theorem}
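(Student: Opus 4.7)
The plan is to first use a density result to reduce to a regular interface, and then construct the recovery sequence by locally perturbing $\partial A$ in the direction dictated by which of the three candidates realizes the minimum in $\beta(\nu;\rho)$. Heuristically, the three combinations appearing in the minimum correspond to the three ways a sequence $A_k$ with $\chi_{A_k}\to\chi_A$ can position its interface relative to $\partial A$: the ``no-shift'' case reproduces $\rho_0^{-\nu}+\rho_1^\nu$ to leading order (this is precisely what the unperturbed nonlocal perimeter of $A$ computes); an inward shift along $\nu$ by an amount $\delta_k$ with $\eps_k\ll\delta_k\to 0$ realizes $\rho_0^{\nu}+\rho_1^{\nu}$; and an outward shift realizes $\rho_0^{-\nu}+\rho_1^{-\nu}$. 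The absent fourth combination $\rho_0^\nu+\rho_1^{-\nu}$ cannot be reached while maintaining $L^1$-convergence to $\chi_A$, consistent with its omission from $\beta$.

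\textbf{Reduction and construction.} Using the density result of \cite{dePhilippis2017}, I approximate $A$ in $L^1(\Omega)$ by sets $P_n$ with polyhedral boundary satisfying $\Per(P_n;\rho)\to\Per(A;\rho)$; a diagonal argument then reduces the task to building a recovery sequence for each fixed $P_n$. For polyhedral $P$ with flat faces $F_1,\dots,F_N$ of inner normals $\nu_1,\dots,\nu_N$, the traces $\rho_i^{\pm\nu_j}$ are $\H^{d-1}$-measurable on each $F_j$, so the face can be partitioned (up to an $\H^{d-1}$-null set) into Borel subsets $F_j^{(1)},F_j^{(2)},F_j^{(3)}$ according to which candidate in $\beta(\nu_j;\rho)$ is minimal. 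Fixing a scale $\delta_k\to 0$ with $\delta_k/\eps_k\to\infty$, I build $A_k$ from $P_n$ by leaving it unchanged near $F_j^{(1)}$, removing an inward slab of thickness $\delta_k$ adjacent to $F_j^{(2)}$, and adding an outward slab of thickness $\delta_k$ adjacent to $F_j^{(3)}$. Near the lower-dimensional seams $\partial F_j^{(k)}$ and along the polyhedron edges I interpolate the shift Lipschitz-continuously so that $A_k$ is a well-defined set with $\chi_{A_k}\to\chi_{P_n}$ in $L^1(\Omega)$.

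\textbf{Energy check and main obstacle.} Using the Minkowski reformulation in \cref{lem:properties_perimeter}, the contribution of a patch in $F_j^{(k)}$ to $\Per_{\eps_k}(A_k;\rho)$ reduces to two slab integrals of $\rho_0$ and $\rho_1$ divided by $\eps_k$. Because $\delta_k\gg\eps_k$, each such slab sits entirely in a half-space on a single side of the original face, so it should contribute approximately $\int_{F_j^{(k)}}\rho_i^{\pm\nu_j}\de\H^{d-1}$ with signs matching the shift direction; summing over $j,k$ yields $\Per_{\eps_k}(A_k;\rho)=\Per(P_n;\rho)+o(1)$, and the diagonal argument then concludes. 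The delicate point---and the main obstacle---is justifying this slab-to-trace limit when $\rho_i\in BV$: one needs, for $\H^{d-1}$-a.e.\ $x_0\in F_j$, that the average of $\rho_i$ over a thin slab of thickness $\eps_k$ attached to the face on the $\pm\nu_j$ side converges to $\rho_i^{\pm\nu_j}(x_0)$ in a way uniform enough to integrate over $F_j^{(k)}$. This is a Lebesgue-type differentiation statement adapted both to the trace definition and to the rectangular slab geometry of the construction, and it genuinely uses the full $BV$ regularity of the densities together with the $L^\infty$ bound. The secondary issue, controlling the energy contribution from the interpolation regions near the seams, is routine since their $d$-dimensional volume is $O(\delta_k)$ times an $\H^{d-2}$-measure and contributes only $o(1)$.
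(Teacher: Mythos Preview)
Your overall strategy---reduce to a regular interface, then locally shift inward/outward/not at all according to which term realizes the minimum in $\beta$---is exactly the paper's. Two steps in your writeup, however, do not go through as stated.

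\textbf{(i) The density reduction.} De Philippis et al.\ gives approximation of $BV$ functions by $SBV$ functions with smooth jump set; it does not hand you polyhedral sets $P_n$ with $\Per(P_n;\rho)\to\Per(A;\rho)$. Convergence of the \emph{weighted} perimeter is non-trivial because $\beta(\nu;\rho)$ is built from traces of the $BV$ densities on the moving boundary, and traces can jump when the boundary moves off $\partial^*A$. The paper therefore does more work: it applies De Philippis to $\chi_A$, modifies the resulting $u$ to be $C^1$ up to the manifold $M$, then uses Sard and a careful level-set selection together with a coarea argument to produce $A_s=\{u>s\}$ with $\H^{d-1}(\partial^*A_s\triangle\partial^*A)<C\eta$ and matching normals on the overlap. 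Only this last property lets one replace $\int_{\partial A_s}\beta\,\de\H^{d-1}$ by $\int_{\partial^*A}\beta\,\de\H^{d-1}$ up to $O(\eta)$.

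\textbf{(ii) The Borel partition and the seam estimate.} This is the real gap. On a face $F_j$ the traces $\rho_i^{\pm\nu_j}$ are merely $L^1$ functions (traces of $BV$ functions on a hypersurface have no extra regularity in general), so the sets $F_j^{(k)}$ on which a given candidate is minimal are only measurable. Their topological boundaries inside $F_j$ can have full $\H^{d-1}$-measure, and the vertical walls your shifts create over $\partial F_j^{(k)}$ then carry an $\H^{d-1}$-mass that is \emph{not} $O(\delta_k)\cdot\H^{d-2}(\partial F_j^{(k)})$ but potentially of order $\delta_k\cdot\H^{d-1}(F_j)$; after dividing by $\eps_k$ this blows up since $\delta_k/\eps_k\to\infty$. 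The paper avoids this entirely: instead of a global Borel partition it uses a Morse covering of $\partial A_s$ by small cubes centered at Lebesgue points of $\beta$ and of $\rho_i^{\pm\nu}$ (Hypotheses~1--8 in the proof of Proposition~\ref{prop:smoothRecovery}). Each cube receives a \emph{single} shift direction determined by the center, and the error from not achieving the pointwise minimum throughout the cube is controlled by the Lebesgue-point property. The seams are then cube faces, genuinely $(d-2)$-dimensional, and the Minkowski-content bound of Proposition~\ref{prop:minkowskiContent} handles the leftover small set. Your ``main obstacle'' (the slab-to-trace limit) is precisely Lemma~\ref{lem:traceOnC1graph} in the paper, proved via the $BV$ trace estimate \labelcref{eqn:AFPtrace} after a bi-Lipschitz change of variables flattening the $C^1$ graph.
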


The proof of this theorem relies on technical properties of BV functions, but is conceptually simple. 
Our approach is outlined in the following steps:
\begin{enumerate}
\item We use use a recent approximation result of De Philippis et al. \cite{dePhilippis2017} to approximate the $BV$ function $\chi_A$ by a function $u \in BV(\Omega)$ having higher regularity. We select a level-set, given by $A_\eta$, of $u$ such that $\partial A_\eta = \partial^*A_\eta$, $\mathcal{H}^{d-1}(\partial A_\eta \triangle \partial^* A) \ll 1$, and a large portion of $\partial A_\eta$ is locally given by a $C^1$ graph.

\item We then break $\partial A_\eta$ into a good set, with smooth boundary, and a small bad set, with controllable error. 

\item \label[step]{step:smoothRecov} In the good set, $A_\eta$ has $C^1$ boundary, and here we construct an almost optimal improvement by perturbing the smooth interface. This is the result of \cref{prop:smoothRecovery}.
\begin{enumerate}
\item Using a covering argument, we localize the construction of a near optimal sequence and introduce improved approximations in balls centered on points $x_0 \in \partial A_\eta$.

\item Depending on the minimum value of $\beta(\nu;\bm{\rho}) = \min\{\rho_0^{\nu} + \rho_1^{\nu},\rho_0^{-\nu} + \rho_1^{-\nu},\rho_0^{- \nu} + \rho_1^{ \nu} \}$, where $\nu$ is the inner normal of $A_\eta$, we either shift the interface up or down slightly or, in the latter case of the minimum, leave it unperturbed to recover the optimal trace energies. The essence of this is to approximately satisfy $\rho_0 + \rho_1 \approx \beta(\nu;\bm{\rho})$ on the modified interface.
\end{enumerate}

\item Diagonalizing on approximations for $A_\eta$ and then on $\eta$, one obtains a recovery sequence the original set $A$.
\end{enumerate}

We begin with the proof of \cref{prop:smoothRecovery} for \cref{step:smoothRecov}, and for this, a couple auxiliary lemmas will make the argument easier. In our construction, we will select an appropriate level-set using the following lemma, which says that given control on an integral one can control the integrand in a large region.
\begin{lemma}\label{lem:thetaFrac}
Let $f:(a,b) \to [0,\infty)$ be integrable and $\theta \in (0,1)$. Then
$$\theta (b-a) \leq \mathcal{L}^1 \left(\left\{y:f(y)\leq \frac{1}{1-\theta}\intbar_a^b f\de t \right\}\right) .$$
\end{lemma}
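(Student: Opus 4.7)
The plan is to prove this via a direct application of Markov's (a.k.a.\ Chebyshev's) inequality to the integrable function $f$ on the interval $(a,b)$. Denoting the average $M := \intbar_a^b f \de t$, the set in question is exactly the complement (within $(a,b)$) of $\{y : f(y) > M/(1-\theta)\}$, so it suffices to upper bound the measure of this complement by $(1-\theta)(b-a)$.

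First I would write
\begin{equation*}
\frac{M}{1-\theta}\,\mathcal{L}^1\!\left(\left\{y\in (a,b) : f(y) > \tfrac{M}{1-\theta}\right\}\right)
\leq \int_{\{f > M/(1-\theta)\}} f \de t \leq \int_a^b f \de t = (b-a)\,M,
\end{equation*}
which is simply Markov's inequality. Dividing by $M/(1-\theta)$ (assuming $M>0$; the case $M=0$ is trivial since then $f=0$ a.e.\ and the set on the right has full measure $(b-a) \geq \theta(b-a)$) yields
\begin{equation*}
\mathcal{L}^1\!\left(\left\{y\in (a,b) : f(y) > \tfrac{M}{1-\theta}\right\}\right) \leq (1-\theta)(b-a).
\end{equation*}
Taking complements in $(a,b)$ then gives the claim.

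There is no real obstacle here; the only thing to be mindful of is the degenerate case $M=0$, which I would dispatch separately at the start, and the fact that Markov's inequality as stated controls the strict superlevel set $\{f>c\}$, which is precisely the complement of the non-strict sublevel set $\{f\leq c\}$ appearing in the statement.
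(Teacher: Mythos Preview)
Your proposal is correct and follows essentially the same route as the paper: apply Markov's inequality to bound the measure of the superlevel set by $(1-\theta)(b-a)$ and then pass to the complement. If anything, you are slightly more careful than the paper in explicitly handling the degenerate case $M=0$.
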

\begin{proof}
Using Markov's inequality one computes
\begin{align*}
    \mathcal{L}^1 \left(\left\{y:f(y)< \frac{1}{1-\theta}\intbar_a^b f\de t \right\}\right)
    &=
    (b-a)
    -
    \mathcal{L}^1 \left(\left\{y:f(y)\geq \frac{1}{1-\theta}\intbar_a^b f\de t \right\}\right) 
    \\
    &\geq 
    (b-a)-(1-\theta)(b-a) = \theta(b-a).
\end{align*}
\end{proof}
To control errors arising in our interface construction, we will take advantage of the assumption $\rho_i \in L^\infty (\Omega)$. Specifically, energetic contributions of $\mathcal{H}^{d-1}$-small pieces of our construction will be thrown away using the following proposition for the classical Minkwoski content.

\begin{proposition}[Theorem 2.106 \cite{ambrosio2000functions}]\label{prop:minkowskiContent}
Suppose that $f:\R^{d-1}\to \R^d$ is a Lipschitz map. Then for a compact set $K\subset\subset \R^{d-1}$ it holds 
$$\mathcal{M}(f(K)) = \mathcal{H}^{d-1}(f(K)),$$
where $\mathcal{M}$ is the $d-1$-dimensional Minkowski content defined in \labelcref{eqn:minkowskiContent}.
\end{proposition}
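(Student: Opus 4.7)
The plan is to prove the equality by establishing the two inequalities $\mathcal{M}(f(K)) \leq \mathcal{H}^{d-1}(f(K))$ and $\mathcal{M}(f(K)) \geq \mathcal{H}^{d-1}(f(K))$ separately, using the Lipschitz structure of $f$ on the one hand and the countable $(d-1)$-rectifiability of $f(K)$---being a Lipschitz image of a compact subset of $\R^{d-1}$---on the other.

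For the upper bound, I would cover $K$ by finitely many cubes $Q_i$ of small side length $r$. Each image $f(Q_i)$ then has diameter at most $L\sqrt{d-1}\,r$, where $L = \Lip(f)$, so the $\epsilon$-neighborhood of $f(K)$ is contained in the union of balls of radius $\epsilon + L\sqrt{d-1}\,r$ centered at points of $f(K)$. Summing volumes, invoking the area formula
\[
\mathcal{H}^{d-1}(f(K)) = \int_K J_f^{d-1}\,\de\mathcal{H}^{d-1},
\]
and choosing $r = r(\epsilon) \to 0$ appropriately yields $\limsup_{\epsilon\to 0}\mathcal{M}_\epsilon(f(K)) \leq \mathcal{H}^{d-1}(f(K))$.

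For the lower bound I would use rectifiability: at $\mathcal{H}^{d-1}$-a.e. $y \in f(K)$ there is an approximate tangent plane $T_y$, and locally $f(K)$ is well approximated by a Lipschitz graph over $T_y$. The $\epsilon$-tube around such a graph contains an almost-cylinder of height $2\epsilon$ lying over the graph, contributing essentially $2\epsilon \cdot \mathcal{H}^{d-1}(f(K))$ to the Lebesgue measure of the $\epsilon$-neighborhood. Selecting a Vitali disjoint family of balls in $f(K)$ on which the tangent plane approximation is uniformly good then assembles the global estimate.

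The main obstacle is the overlap issue in the lower bound: naively, $\epsilon$-tubes around different pieces of $f(K)$ can overlap, destroying the factor $2\epsilon$ in the Minkowski ratio. Resolving this requires a quantitative choice of scales---Vitali ball radii $\rho_i$ with $\epsilon \ll \rho_i \ll 1$, small enough that the tangent plane approximation is accurate, yet large enough that the $\epsilon$-neighborhoods of the disjoint balls on $f(K)$ remain essentially disjoint. Since this is the classical argument for the coincidence of Minkowski content and Hausdorff measure on rectifiable sets (Theorem 2.106 in \cite{ambrosio2000functions}), in practice we simply invoke the cited statement rather than redoing the full geometric measure theory.
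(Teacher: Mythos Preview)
The paper does not prove this proposition at all; it is stated purely as a citation of Theorem~2.106 in Ambrosio--Fusco--Pallara, and your proposal ultimately lands in the same place by explicitly invoking that reference. In that sense the two agree.

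A word on the sketch you give before deferring to the citation. The lower-bound outline (tangent planes, Vitali disjoint family, overlap control via $\epsilon \ll \rho_i \ll 1$) is the correct mechanism. The upper-bound sketch, however, is not sharp as written: covering $K$ by cubes $Q_i$ of side $r$ and enclosing each $f(Q_i)$ in a ball of radius $O(\epsilon + Lr)$ produces a total tube volume of order $r^{-(d-1)}(\epsilon + Lr)^d$, and optimizing over $r$ gives a bound proportional to $L^{d-1}\mathcal{L}^{d-1}(K)$ rather than $\int_K J_f^{d-1}\,\de x = \mathcal{H}^{d-1}(f(K))$. The sharp upper bound requires first decomposing $K$ (via a $C^1$ Lusin-type approximation of $f$) into pieces on which the differential is nearly constant, so that the local Lipschitz constant matches the Jacobian; only then does the tube estimate recover the area-formula integral. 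Since you defer to the cited theorem anyway this does not affect correctness, but the sketch as stated would not close on its own.
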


In the proof of \cref{step:smoothRecov} and in the case that the optimal energy sees the traces on both sides of the interface, we will need a recovery sequence with non-flat interface. We will apply the following lemma to show that the weighted perimeters converge on either side of the interface.

\begin{lemma}\label{lem:traceOnC1graph}
Suppose that $A\subset Q(0,r)$ is given by the sub-graph of \ksr{a function in $C^1(\overline{Q'(0,r)})$} which does not intersect the top and bottom of the cube $Q(0,r)$. Then
$$\lim_{\epsilon\to 0}\Per_\epsilon(A;\bm{\rho}, Q(0,r)) = \int_{\partial A \cap Q(0,r)}\left( \rho_0^{-\nu} + \rho_1^\nu \right) \de\H^{d-1}.$$
\end{lemma}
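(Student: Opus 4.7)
The plan is to treat the two summands of $\Per_\epsilon(A;\rho,Q(0,r)) = \Per_\epsilon^0(A;\rho,Q(0,r)) + \Per_\epsilon^1(A;\rho,Q(0,r))$ (as given by \cref{lem:properties_perimeter}) independently by a tubular-neighborhood analysis of the $C^1$ interface $\Gamma := \partial A \cap Q(0,r)$. Writing $f\in C^1(Q'(0,r))$ for the defining sub-graph function, $\nu$ for the inner unit normal of $A$ on $\Gamma$, and $n := -\nu$ for the outer normal, it suffices to show
$$
\frac{1}{\epsilon}\int_{T_\epsilon^+}\rho_0\de x \longrightarrow \int_\Gamma \rho_0^{-\nu}\de\H^{d-1},\qquad
\frac{1}{\epsilon}\int_{T_\epsilon^-}\rho_1\de x \longrightarrow \int_\Gamma \rho_1^{\nu}\de\H^{d-1},
$$
where $T_\epsilon^+ := \{x\in Q(0,r)\setminus A : \dist(x,A\cap Q(0,r))<\epsilon\}$ and $T_\epsilon^-$ is the analogous inner tube. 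Note that since $A$ is open with $C^1$ boundary, $A^1 = A$ up to a null set, so this is consistent with the $A^1$-formulation of \cref{lem:properties_perimeter}.

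For the outer term I would parametrize $T_\epsilon^+$ via the outward normal exponential map $\Phi(p,s) := p + s\,n(p)$. Since $f$ is $C^1$ and its graph is bounded away from the top and bottom of $Q(0,r)$, there is $\epsilon_0>0$ such that for every $\epsilon<\epsilon_0$ the map $\Phi$ is a $C^1$-diffeomorphism from $\Gamma\times(0,\epsilon)$ onto a neighborhood $U_\epsilon$ of $\Gamma$ lying on the outer side of $A$, whose volumetric Jacobian $\tilde J(p,s)$ (relative to $\H^{d-1}\restr\Gamma \otimes \mathcal{L}^1$) is continuous and satisfies $\tilde J(p,s) \to 1$ uniformly as $s \to 0^+$. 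The symmetric difference $T_\epsilon^+\triangle U_\epsilon$ is contained in the Euclidean $\epsilon$-neighborhood of the $(d-2)$-dimensional edge $\partial\Gamma\subset \partial Q'(0,r)\times \R$, so it has volume $O(\epsilon^2)$; combined with $\rho_0\in L^\infty$ this contributes only $O(\epsilon)$ to the normalized energy. After the change of variables and Fubini,
$$
\frac{1}{\epsilon}\int_{T_\epsilon^+}\rho_0\de x \;=\; \int_\Gamma \left(\frac{1}{\epsilon}\int_0^\epsilon \rho_0(p+s\,n(p))\,\tilde J(p,s)\de s\right)\de\H^{d-1}(p) + o(1).
$$

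The crux is passing to the limit in the inner average. I would invoke the $BV$ slicing theorem (e.g.\ \cite[Theorem 3.103]{ambrosio2000functions}), applied in local $C^1$-straightening charts of $\Gamma$ where the slicing direction becomes the fixed coordinate vector $e_d$, to conclude that for $\H^{d-1}$-a.e.\ $p\in\Gamma$ the 1D slice $s\mapsto \rho_0(p+s\,n(p))$ lies in $BV$ near $0$ with right-hand measure-theoretic trace equal to $\rho_0^{-\nu}(p)$. Lebesgue differentiation then gives
$$
\frac{1}{\epsilon}\int_0^\epsilon \rho_0(p+s\,n(p))\,\tilde J(p,s)\de s \longrightarrow \rho_0^{-\nu}(p)\quad \text{for }\H^{d-1}\text{-a.e.\ }p\in\Gamma,
$$
and dominated convergence --- justified by $\|\rho_0\|_{L^\infty}<\infty$ and $\sup \tilde J<\infty$ --- lets me pass through the outer $\H^{d-1}$-integral, proving the first limit. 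The second is obtained by the identical argument with $T_\epsilon^-$ parametrized via $\Phi^-(p,s) := p + s\,\nu(p)$ and $\rho_0^{-\nu}$ replaced by $\rho_1^{\nu}$; summing yields the claim.

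The hard part is this identification of the 1D normal average limit with the measure-theoretic trace $\rho_i^{\pm\nu}$: pointwise continuity of $\rho_i$ fails in general, so one must appeal to the $BV$ slicing structure rather than a naive continuity argument, and the fact that the slicing direction $n(p)$ varies along $\Gamma$ forces the use of local $C^1$-straightening charts where the reference direction is constant. Beyond this, all remaining ingredients --- the regularity of the normal exponential map, the area formula, and the $L^\infty$-based dominated convergence --- are standard.
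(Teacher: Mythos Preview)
Your overall strategy---parametrize the one-sided $\epsilon$-tubes and reduce to one-sided trace averages of $\rho_i$---matches the paper's, but two regularity gaps prevent it from going through as written. First, for a merely $C^1$ graph the unit normal $n=(-\nabla f,1)/\sqrt{1+|\nabla f|^2}$ is only continuous, so the normal exponential map $\Phi(p,s)=p+s\,n(p)$ is neither a $C^1$-diffeomorphism nor even Lipschitz in $p$; consequently the Jacobian $\tilde J$ you invoke is not classically defined and the area formula does not apply to $\Phi$. Second, a $C^1$ graph-straightening chart $(x',x_d)\mapsto(x',x_d-f(x'))$ flattens $\Gamma$ but does \emph{not} send the varying normal direction $n(p)$ to the fixed direction $e_d$ (its differential maps $n(p)$ to a vector still depending on $\nabla f(x')$), so BV slicing along $e_d$ in those coordinates does not produce BV restrictions along the original normal lines. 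The only chart that would straighten the normals is the tubular-neighborhood chart $\Phi^{-1}$, which by the first point is only $C^0$. Your pointwise identification $\frac{1}{\epsilon}\int_0^\epsilon \rho_0(p+s\,n(p))\de s\to\rho_0^{-\nu}(p)$ for $\H^{d-1}$-a.e.\ $p$ therefore lacks justification.

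The paper sidesteps both issues. It slices $T_\epsilon^+$ via the coarea formula for the signed distance (Lipschitz, $|\nabla\operatorname{sdist}|=1$), parametrizes each level set $\{\operatorname{sdist}=s\}$ as the graph of a Lipschitz function $g_s$ converging to $g_0=f$ in $C^1$ on slightly shrunk cubes $Q'(0,r(1-\delta))$, and works with the genuinely bi-Lipschitz map $\phi(x',s)=(x',g_s(x'))$. Crucially, it replaces your pointwise-plus-dominated-convergence step by $L^1$ convergence of the slice averages to the trace: $\rho_0\circ\phi$ is still $BV$ by \cite[Thm.~3.16]{ambrosio2000functions}, the trace identity $(\rho_0\circ\phi)^{e_d}(x',0)=\rho_0^{-\nu}(\phi_0(x'))$ follows from \cite[Thm.~3.108]{ambrosio2000functions}, and the estimate \cite[Eq.~(3.88)]{ambrosio2000functions} gives
\[
\int_{Q'(0,r(1-\delta))}\left|\intbar_0^\epsilon (\rho_0\circ\phi)(x',s)\de s - \rho_0^{-\nu}(\phi_0(x'))\right|\de x' \le |D(\rho_0\circ\phi)|\bigl(Q'(0,r(1-\delta))\times(0,\epsilon)\bigr)\to 0,
\]
which is robust at $C^1$ regularity; the $\delta$-collar is then removed at the end using $\rho_0\in L^\infty$.
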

\begin{proof}
\ksr{Again, we suppress the dependency of $\bm{\rho}$.}
It suffices to show convergence for the outer perimeter $\Per_\epsilon^0 (A;Q(0,r))$ defined in \labelcref{eq:outer_inner}.

Let $g_0:Q'(0,r)\to (-r/2,r/2)$ be the function prescribing the graph associated with $\partial A$. Let $x \in \R^{d} \mapsto \operatorname{sdist}(x, A)$ denote the signed distance from $A$, with the convention that it is non-negative outside of~$A$. 
Note first that $\operatorname{sdist}^{-1}(s)\cap Q(0,\ksr{r-2s})$ is the graph of a Lipschitz function $g_s:Q'(0,\ksr{r-2s})\to (-r/2,r/2)$ with uniformly bounded gradient depending on $g_0$ for all $s>0$ sufficiently small. To see this, note that the level set $\operatorname{sdist}$ can be expressed by translating the graph of the boundary, that is, 
\begin{equation}\label{eqn:formgs}
g_s(x') = \sup_{(\nu',\nu_d) \in S^{d-1}}\{g_0(x'+s\nu')- s\nu_d\},
\end{equation} which is the supremum of equicontinuous bounded functions. 
\ksr{In fact, we have that 
\begin{equation}\label{eqn:convergenceClaim}
\|\nabla g_s - \nabla g_0\|_{L^\infty (Q'(0,r-2s))} \leq \omega(s),
\end{equation}
where $\omega$ is the modulus of continuity of $\nabla g_0$ in $Q(0,r)$. To see this, we note that for sufficiently small $t\in \R^{d-1}$ and $x' \in Q'(0,r-2s)$, by \labelcref{eqn:formgs}, there is always a $\nu_{t}\in S^{d-1}$ such that $g_s(x'+t) = g_0(x'+t+s\nu'_{t})- s\nu_{t,d}$. Using the mean value theorem, we can estimate
\begin{align*}
g_s(x'+t) - g_s(x') \leq & g_0(x'+t+s\nu'_{0})- s\nu_{0,d} -(g_0(x'+s\nu'_{0})- s\nu_{0,d}) \\
= & \langle\nabla g_0 (x' +\theta t + s\nu'_{0}), t\rangle \leq   \langle\nabla g_0 (x'), t\rangle + |t|\omega(|t| + s),
\end{align*}
where $\theta \in (0,1);$ the same bound holds from below. Now assuming that $x'$ is a point of differentiability for $g_s,$ we insert $g_s(x'+t) - g_s(x') = \langle\nabla g_s(x'),t\rangle + o(|t|)$ into the above inequality to find
$$|\langle\nabla g_s(x') - \nabla g_0 (x'),t\rangle|\leq o(|t|) + |t|\omega(|t|+s). $$
Fixing $r \in (0,1)$, taking the supremum over $t \in rS^{d-1}$, dividing by $r$, and then sending $r\to 0$, this inequality becomes
$$\|\nabla g_s(x') - \nabla g_0 (x')\|\leq \omega(s), $$ and as Lipschitz functions are differentiable almost everywhere, we recover \labelcref{eqn:convergenceClaim}.}

Define $\phi(x',s): = \phi_s(x') := (x',g_s(x'))$. We now apply the coarea and area formulas to rewrite the perimeter as 
\begin{equation}\nonumber
\begin{aligned}
\Per_\epsilon^0(A) = \frac{1}{\epsilon}\int_{\operatorname{sdist}^{-1}((0,\epsilon))}\rho_0 \de x = &\, \intbar_0^\epsilon \left[ \int_{\operatorname{sdist}^{-1}(s)}\rho_0(x) \de\H^{d-1}(x)\right]\de s \\
= & \,\intbar_0^\epsilon \left[ \int_{Q'(0,r)}\rho_0(\phi_s(x')) J_{x'}\phi_s(x')\de x'\right]\de s.
\end{aligned}
\end{equation}
Using the area formula once again, we have
$$\int_{\partial A \cap Q(0,r)} \rho_0^{-\nu}\de\H^{d-1} = \int_{Q'(0,r)}\rho_0^{-\nu}(\phi_0(x')) J_{x'}\phi_0(x')\de x'.$$
Taking the difference, we can estimate
\begin{equation}\nonumber
\begin{aligned}
&\left|  \Per_\epsilon^0(A) -\int_{\partial A \cap Q(0,r)} \rho_0^{-\nu} \de\H^{d-1} \right|  \\
&\leq \int_{Q'(0,r(1-\delta))}\left|\intbar_0^\epsilon \rho_0(\phi_s(x')) \de s  - \rho_0^{-\nu}(\phi_0(x'))\right| J_{x'}\phi_0(x')\de x'  \\
& \quad + C(\rho_0)\sup_{s\in (0,\epsilon)}\|J_{x'}\phi_s - J_{x'}\phi_0 \|_{L^\infty(Q'(0,r(1-\delta)))} r^{d-1} + C(\rho_0,\partial A)\left(r^{d-1}-((1-\delta) r)^{d-1}\right).
\end{aligned}
\end{equation}
Taking the $\limsup$ as $\epsilon\to 0$ and then letting $\delta\to 0 $ in the above estimate, we see that the lemma will be concluded if we show that
\begin{equation}\label{eqn:BVintTraceEst}
\limsup_{\epsilon\to 0}\int_{Q'(0,r(1-\delta))}\left|\intbar_0^\epsilon \rho_0(\phi_s(x')) \de s  - \rho_0^{-\nu}(\phi_0(x'))\right|\de x' = 0.
\end{equation}

Note that $(x',s)\mapsto \phi(x',s)$ is a bi-Lipschitz function on $Q'(0,r(1-\delta))\times (0,\epsilon)$ for sufficiently small $\epsilon.$
By \cite[Theorem 3.16]{ambrosio2000functions} on the composition of $BV$ functions with Lipschitz maps, it follows that $(x',s)\mapsto  \rho_0 \circ \phi(x',s)$ is a $BV$ function with 
\begin{equation}\label{eqn:gradCompEst}
|D(\rho_0 \circ \phi)|(Q'(0,r(1-\delta))\times (0,\epsilon)) \leq \|\nabla \phi^{-1}\|_{L^\infty}^{d-1}|D\rho_0|(\phi(Q'(0,r(1-\delta))\times (0,\epsilon))) \xrightarrow[\epsilon \to 0]{} 0.
\end{equation} 
Further, by fine properties of $BV$ functions (see, \cite[Theorem 3.108 \ksr{(b)}]{ambrosio2000functions}), it follows that 
\begin{equation}\label{eqn:covFineProp}
\left( \rho_0\circ \phi \right)^{e_d}(x',0) \ksr{= \lim_{s\downarrow 0} \rho_0(x',\phi_s(x')) = \lim_{s\downarrow 0} \rho_0(x',\phi_0(x')+s)} = \rho_0^{-\nu}(\phi_0(x')) 
\end{equation} for $x'\in \ksr{Q'(0,r)}$ almost everywhere.

Rewriting \cite[Eq. (3.88)]{ambrosio2000functions}, for $f \in BV(Q(0,r(1-\delta)))$, we have 
\begin{equation}\label{eqn:AFPtrace}
\int_{Q'(0,r(1-\delta))} \left|\intbar_0^\epsilon f(x',s) \de s - f^{e_d}(x,0)\right|\de x'\leq |Df|(Q'(0,r(1-\delta))\times (0,\epsilon)).
\end{equation}
Inserting $f = \rho_0\circ \phi$ into the above equation and using \labelcref{eqn:gradCompEst} and \labelcref{eqn:covFineProp} concludes the proof of \labelcref{eqn:BVintTraceEst} and thereby the lemma.
\end{proof}

We now prove that smooth sets have near optimal approximations, completing the proof of \cref{step:smoothRecov}.
As a matter of notation, we will typically consider the closure and boundary of a set $A$ relative to $\Omega$ and denote this as $\overline{A}$ and $\partial A$, respectively. However, to denote the closure of a set $A$ in $\R^d,$ we will write $\overline{A}^{\R^d}$. This distinction will be important to ensure that the energy does not charge the boundary of $\Omega.$

\begin{proposition}\label{prop:smoothRecovery}
Let $\Omega\subset \R^d$ be an open, bounded set with Lipschitz boundary, $M\subset \R^d$ a $C^1$-manifold without boundary, and $A \subset \Omega$ a set. Suppose that $\overline{\partial A}^{\R^d}$ is a submanifold of $M$, with the additional properties that $$\overline{\partial A}^{\R^d} = M \cap \overline{\Omega}\quad \text{ and } \quad \H^{d-1}(M\cap \partial \Omega) = 0.$$ Then for any $\eta>0,$ there is $A_\eta$ such that $A_\eta = A$ in a neighborhood of $\partial \Omega$ and the following inequalities hold:
\begin{equation}\nonumber
\begin{aligned}
\|\chi_{A} - \chi_{A_\eta}\|_{L^1(\Omega)}\leq & \ \eta\quad  \text{ and } \quad \limsup_{\epsilon \to 0} \Per_\epsilon (A_\eta;\bm{\rho},\Omega) \leq & \int_{\partial A} \ksr{\beta\left(\frac{D\chi_A}{\abs{D\chi_A}};\bm{\rho}\right)} \de\mathcal{H}^{d-1} + \eta.
\end{aligned}
\end{equation}
\end{proposition}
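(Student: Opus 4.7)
The plan is to construct $A_\eta$ by locally perturbing the interface $\partial A$ inside a tubular neighborhood of $M$, choosing the perturbation at each point according to which term realizes the minimum in $\beta(\nu;\rho)$. Since $M$ is $C^1$ and $\overline{\partial A}^{\R^d} = M\cap\overline{\Omega}$ is compact, there exist $\eta_0>0$ and a tubular neighborhood $T = \{x : \dist(x,M) < \eta_0\}$ on which the nearest-point projection $\pi : T \to M$ and signed distance $s(x)$ are $C^1$, with $A = \{s<0\}$ locally (for a consistent orientation of $\nu_M$ identified with the inner normal $\nu$ of $A$). Since $\H^{d-1}(M \cap \partial\Omega) = 0$, I fix an open $V \subset\subset \Omega$ with $\H^{d-1}(\partial A \setminus V) < \eta$, so that the perturbation can be supported in $V$ and thereby $A_\eta = A$ near $\partial\Omega$.

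Next, I partition $\partial A$ into $\H^{d-1}$-measurable sets $S_+, S_-, S_0$ depending on which of $\rho_0^{-\nu}+\rho_1^{-\nu}$, $\rho_0^{\nu}+\rho_1^{\nu}$, or $\rho_0^{-\nu}+\rho_1^{\nu}$ realizes the minimum in \labelcref{eq:beta_nu} (with a tie-breaking rule). Using inner regularity of $\H^{d-1}\restr\partial A$ and a covering argument, I approximate these by subsets $K_+,K_-,K_0\subset M\cap V$ with $(d-2)$-rectifiable relative boundaries of finite $\H^{d-2}$-measure such that $\H^{d-1}(\partial A\setminus(K_+\cup K_-\cup K_0)) < \eta$. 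For scales $0 < w\ll\delta\ll\eta_0$, I choose a Lipschitz function $\sigma:M\to\R$ equal to $\delta$ on $K_+$, $-\delta$ on $K_-$, $0$ on $K_0\cup(M\setminus V)$, interpolated linearly in width-$w$ tubes around the $M$-relative boundaries of the $K_*$. Define
\[
A_\eta := \{x \in T : s(x) < \sigma(\pi(x))\}\cup(A\setminus T).
\]
Then $\|\chi_{A_\eta}-\chi_A\|_{L^1(\Omega)}\leq C\delta\,\H^{d-1}(M\cap\overline\Omega)\leq\eta$ for $\delta$ small, and $A_\eta=A$ near $\partial\Omega$ by construction.

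The boundary $\partial A_\eta$ decomposes into three \emph{pure} $C^1$-graph pieces lying over $K_+,K_-,K_0$ at constant normal offset $\sigma$, thin transitions over the $w$-tubes, and the original interface over the bad set $(M\cap V)\setminus(K_+\cup K_-\cup K_0)$. On each pure piece, \cref{lem:traceOnC1graph} applied in local cubes gives
\[
\lim_{\eps\to 0}\Per_\eps(A_\eta;Q) = \int_{\partial A_\eta\cap Q}\left(\rho_0^{-\nu'}+\rho_1^{\nu'}\right)\de\H^{d-1},
\]
where $\nu'=\nu$ since the graph is only translated normally. The traces on the right are evaluated at the \emph{shifted} interface; using the BV fine structure of $\rho_i$ \cite[Thm.~3.77, 3.108]{ambrosio2000functions} together with a Fubini/coarea argument in the tubular coordinate $s$ and \cref{lem:thetaFrac}, I select $\delta$ so that the shifted-interface traces $\rho_i^{\pm\nu'}(y+\delta\nu_M(y))$ approximate the original one-sided traces $\rho_i^{\pm\nu}(y)$ in $L^1(K_*)$. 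Summing over the three pieces yields $\int_{K_+\cup K_-\cup K_0}\beta(\nu;\rho)\,\de\H^{d-1}$ in the limit.

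The residual contributions come from (i) the transition tubes and (ii) the bad set. By the $L^\infty$ bounds on $\rho_0,\rho_1$ and \cref{prop:minkowskiContent}, each contributes at most $C$ times its $\H^{d-1}$-area, which is $O(w\,\H^{d-2}(\partial_M K_\pm))$ and $O(\eta)$ respectively; taking $w$ small absorbs the first into $\eta$. The main technical obstacle is the trace-identification step: BV traces of $\rho_i$ on the shifted interface do not a priori coincide with one-sided traces on the original interface, so justifying $\rho_i^{\pm\nu'}(y+\delta\nu_M(y))\approx\rho_i^{\pm\nu}(y)$ requires a one-dimensional slicing argument in $s$ (paralleling the use of \cite[Eq.~(3.88)]{ambrosio2000functions} in the proof of \cref{lem:traceOnC1graph}) together with a careful selection of $\delta$ via \cref{lem:thetaFrac}.
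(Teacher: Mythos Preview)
Your approach is essentially correct but takes a genuinely different route from the paper's. The paper works \emph{locally}: it partitions $\partial A$ into $S_G=\partial A\setminus J_\rho$ and $S_B=\partial A\cap J_\rho$, and for $\H^{d-1}$-a.e.\ point $x_0$ (satisfying a list of Lebesgue-point and density hypotheses) it constructs a modification inside a single small cube $Q_{\nu(x_0)}(x_0,r)$. In Case~1 ($\beta_\nu(x_0)=\rho_0^{\pm\nu}+\rho_1^{\pm\nu}$) the interface is replaced in $Q'(0,r(1-\delta))$ by a \emph{flat} hyperplane at a height $a\in(\eta,2\eta)$ chosen via \cref{lem:thetaFrac}; in Case~2 ($\beta_\nu(x_0)=\rho_0^{-\nu}+\rho_1^{\nu}$) the set is left unchanged. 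A Morse covering by disjoint cubes then glues these local pieces, and the uncovered remainder is controlled via \cref{prop:minkowskiContent} and a Besicovitch cover.

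Your construction is \emph{global}: a single Lipschitz offset $\sigma$ on $M$, determined by a partition $K_+,K_-,K_0$ according to which term realises the minimum, with interpolation in $w$-tubes. This avoids the covering machinery entirely and makes the gluing automatic, at the cost of introducing Lipschitz transition pieces whose area must be controlled. Both approaches rely on the same three tools (\cref{lem:traceOnC1graph}, \cref{lem:thetaFrac}, \cref{prop:minkowskiContent}), and your identification of the trace step as the crux is exactly right---the paper resolves it pointwise via one-sided Lebesgue-point hypotheses, whereas you would resolve it in $L^1$ by Fubini in the tubular coordinate. One minor correction: the $\H^{d-1}$-area of the transition graphs over the $w$-tubes is of order $\sqrt{w^2+\delta^2}\,\H^{d-2}(\partial_M K_\ast)$, not $O(w\,\H^{d-2})$, since the slope is $\delta/w$; this is harmless because $\delta$ is also a small parameter chosen after $K_\ast$ is fixed. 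You should also make explicit how $K_\pm,K_0$ with finite $\H^{d-2}$ relative boundary are produced (e.g., by intersecting with a fine grid on $M$).
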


\begin{proof}
The primary challenge in this construction is controlling the interaction between the interfaces given by $\partial A$ and by $J_\rho : = J_{\rho_0}\cup J_{\rho_1}$, denoting the jump set of $\rho_0$ and $\rho_1$, with a secondary challenge being to ensure that $\partial A$ does not charge the boundary $\partial \Omega$. We denote the inner normal of $A$ by $\nu(x) : =\frac{D\chi_A}{|D\chi_A|}(x)$, \ksr{abbreviate $\beta_\nu=\beta\left(\tfrac{D\chi_A}{\abs{D\chi_A}};\bm{\rho}\right)$, and suppress the dependency on $\bm{\rho}$} for notational simplicity.

We write $\partial A$ as the union of a good surface and a bad surface $$\partial A  = S_G \cup S_B : = [\partial A \setminus J_\rho] \cup [\partial A \cap J_\rho].$$ We will select neighborhoods of Lebesgue points useful to our construction, and as such, we are only interested in keeping track of properties of $S_G$ and $S_B$ up to an  $\H^{d-1}$-null set.
We may assume the following are satisfied up to $\H^{d-1}$-null sets:
\begin{enumerate}[label={\arabic*.)}]
\item \label[hyp]{hyp:graph} For each point $x\in \partial A$, there is $r_x > 0$ such that up to a rotation and translation, $Q_{\nu(x)}(x,r_x)\cap A$ is given by the subgraph of a $C^1$ function $g_x$ centered at the origin with $\nabla g_x(0) = 0$.

\item \label[hyp]{hyp:massonSB}  For each point $x \in \partial A$,
$$  \lim_{r \to 0}\frac{\H^{d-1}( \partial A \cap Q_{\nu(x)}(x,r))}{r^{d-1}}= 1, $$
where we recall $Q_{\nu}(x,r)$ is a cube oriented in the direction $\nu$.

\item \label[hyp]{hyp:LebBeta}  For $x \in \partial A$,
$$ \lim_{r \to 0}\intbar_{\partial A \cap Q_{\nu(x)}(x,r)} |\beta_\nu(y) - \beta_\nu(x)| \de\H^{d-1}(y) = 0.$$

\item \label[hyp]{hyp:LebBulk} For $x\in S_G$, we have $$\lim_{r\to 0} \intbar_{Q_{\nu(x)}(x,r)} |\rho_i(y) - \rho_i(x)|\de y = 0.$$

\item   For $x \in S_G$,
$$ \lim_{r \to 0}\intbar_{\partial A \cap Q_{\nu(x)}(x,r)} |\rho^{\pm\nu}_i(y) - \rho_i(x)| \de\H^{d-1}(y) = 0. $$
 
\item For $x\in S_B$, $\nu(x) = \pm \nu_{J_\rho}(x) $, where $\nu_{J_\rho}$ denotes a unit normal of the $d-1$-rectifiable set $J_{\rho}.$
 
\item \label[hyp]{hyp:SBLeb} For $x\in S_B$, we may assume
\begin{equation}\nonumber
\begin{aligned}
\lim_{r\to 0} \intbar_{Q_{\nu(x)}^{\pm}(x,r)} |\rho_i(y) - \rho_i^{\pm\nu}(x)|\de y = 0,
\end{aligned}
\end{equation} 
where $Q_{\nu}^{\pm}(x,r) := \{y\in Q_{\nu}(x,r) : \pm \langle y-x,\nu \rangle > 0 \}.$

\item  \label[hyp]{hyp:goodOrientation} For $x \in S_B$,
$$ \lim_{r \to 0}\intbar_{\partial A \cap Q_{\nu(x)}(x,r)} |\rho^{\pm\nu}_i(y) - \rho_i^{\pm\nu}(x)| \de\H^{d-1}(y) = 0. $$
\end{enumerate}

To prove the proposition, we will use a covering argument to select cubes containing the majority of points in $S_G$ and $S_B$, for which the limits in the above list are approximately satisfied up to small relative error $\eta \ll 1$. Inside each of these nice cubes, we will modify the interface or leave it alone depending on the target energy. 

We begin by showing that for fixed $\eta > 0$, for $\H^{d-1}$-almost every $x\in \partial A$ there is $r_0(x,\eta) > 0$ such that for any $r<r_0$, there is a modification of $A$ given by $A_\eta$ for which $ A_\eta = A$ in a neighborhood of $\partial Q_{\nu(x)}(x,r)$ and the inequalities 
\begin{equation}\label{eqn:almostOptCube}
\begin{aligned}
\|\chi_{A_\eta} - \chi_A\|_{L^1(Q_{\nu(x)}(x,r))} \leq & \ \eta r^d , \\
\limsup_{\epsilon \to 0} \Per_\epsilon(A_\eta ; Q_{\nu(x)}(x,r))\leq & \int_{\partial A \cap Q_{\nu(x)}(x,r)} \ksr{\beta_\nu}\de\H^{d-1} + \eta  r^{d-1} 
\end{aligned}
\end{equation}
hold. As the idea is similar for $x\in S_G$, we focus on the more complicated situation where $x\in S_B.$ 

Without loss of generality, we suppose that $x = 0 \in S_B$, that $\nu(0) = -e_d$, and the relevant properties of \cref{hyp:graph} - \cref{hyp:goodOrientation} are satisfied.
Recalling the definition of $\ksr{\beta_\nu}$ in \labelcref{eq:beta_nu}, to construct a modification of $A$ in a small box centered at $0$, we break into cases: Either 
\begin{align*}
\beta_\nu (0) = \rho_0^{\pm\nu}(0) + \rho_1^{\pm\nu}(0)
\qquad
\text{or}
\qquad 
\beta_\nu (0 ) = \rho_0^{-\nu}(0) + \rho_1^{\nu}(0).
\end{align*}
In the first case, we shift the interface up or down to recover the optimal trace. In the second case, the energy is best when picking up the traces from either side of $A$, and consequently we will not modify $A,$ but must show this suffices.

\textbf{Case 1: $\beta_\nu (0) = \rho_0^{\pm\nu}(0) + \rho_1^{\pm\nu}(0)$.} Without loss of generality, we suppose that $\beta_\nu (0) = \rho_0^{-\nu}(0) + \rho_1^{-\nu}(0)$, which means that the energy is optimal slightly outside of $A$.

By \cref{hyp:graph}, the relative height satisfies $$h_{\partial A}(0,r) : = \sup_{y\in Q'(0,r)}\left\{\frac{g_0(y)}{r}\right\}\to 0 \quad \text{ as }r\to 0,$$ where $Q'$ denotes the $d-1$ dimensional cube. Consequently, also using that $\nabla g_0$ is continuous, we may assume that $r_0 = r(0,\eta)\ll 1$ is such that $$h_{\partial A}(0,r) + \|\nabla g_0\|_{L^\infty(Q'(0,r))} <\eta/2 \quad \text{ for }0<r<r_0.$$ Further by \cref{hyp:SBLeb}, we may take $r_0 \ll 1$ such that for $r<r_0$ we have
\begin{equation}\nonumber
\intbar_{Q^{+}_{e_d}(0,r)} |\rho_i(y) - \rho_i^{-\nu}(0)|\de y \leq \eta^2,
\end{equation}
which gives
\begin{equation}\nonumber
 \intbar_{\eta\ksr{r}}^{2\eta\ksr{r}}\intbar_{Q'(0,r)} |\rho_i(y',t) - \rho_i^{-\nu}(0)|\de y' d t \leq \eta.
 \end{equation}
Thus by \cref{lem:thetaFrac}, for $\theta \in (0,1)$, we may find a $\theta$-fraction of $a\in (\eta\ksr{r},2\eta\ksr{r})$ such that
\begin{equation}\label{eqn:slicedLeb}
 \intbar_{Q'(0,r)} |\rho_i(y',a) - \rho_i^{-\nu}(0)|\de y' \leq C(\theta)\eta.
 \end{equation}
Further, as $J_\rho$ is $\sigma$-finite with respect to $\mathcal{H}^{d-1}$ (as the absolute value of the jump is positive and integrable on this set), we can assume that for such choices of the value $a$ we also have $\mathcal{H}^{d-1} (J_\rho \cap (Q'(0,r) \times \{a\}) ) = 0,$ and thus 
\begin{equation}\label{eqn:coincidentTraces}
\rho_i^{\pm} =\rho_i \quad \text{ for }\H^{d-1}\text{-a.e. on }Q'(0,r) \times \{a\}.
\end{equation}
By \cref{lem:traceOnC1graph} and \labelcref{eqn:coincidentTraces}, for all $r' <r,$ we have 
\begin{equation}\label{eqn:hyperplaneLeb}
\begin{aligned}
& \lim_{\epsilon \to 0} \frac{1}{\epsilon} \int_0^\epsilon \intbar_{Q'(0,r')}\rho_0(y',a+t) \de y' \de t = \intbar _{Q'(0,r')}\rho_0(y',a) \de y', \\
&\lim_{\epsilon \to 0} \frac{1}{\epsilon} \int_0^\epsilon \intbar_{Q'(0,r')}\rho_1(y',a-t) \de y' \de t = \intbar _{Q'(0,r')}\rho_1(y',a) \de y'.
\end{aligned}
\end{equation}
\ksr{Now, we introduce a second small parameter $0< \delta<1$ (that can be taken equal to $\eta$), which we use to shrink the cube under consideration.}

Defining $A_\eta$ in the cube $Q(0,r)$ to be $A \cup (Q'(0,r(1-\delta))\times (-a,a)),$ the $L^1$ estimate of \labelcref{eqn:almostOptCube} follows immediately. 
\ksr{
By \cref{prop:minkowskiContent}, we have
$$\lim_{\eps \to 0}\mathcal{M}_\eps (\partial A_\eta \cap Q(0,r)\setminus Q(0,r(1-\delta))) =\H^{d-1}(\partial A_\eta \cap Q(0,r)\setminus Q(0,r(1-\delta))),$$
and one can additionally see that
\begin{align*}\nonumber
\H^{d-1}(\partial A_\eta \cap Q(0,r)\setminus Q(0,r(1-\delta)))= & \H^{d-1}(\partial A_\eta \cap Q(0,r)\setminus \overline{Q(0,r(1-\delta))}) +  \sum_{j}\mathcal{H}^{d-1} (F_{\pm j})  \\
\leq& \H^{d-1}(\partial A \cap Q(0,r)\setminus \overline{Q(0,r(1-\delta))}) + {C (h_{\partial A}+a)r^{d-1}} \\
\leq & \H^{d-1}({\partial A} \cap Q(0,r)\setminus Q(0,r(1-\delta))) + {C \eta r^{d-1}},
\end{align*} 
where $j$ sums over the faces $F_{\pm j} = \partial A_\eta \cap \{x \in \ksr{\overline{Q(0,r(1-\delta))}}:\pm x\cdot e_j = r(1-\delta)\}.$}


Consequently, we may apply \labelcref{eqn:hyperplaneLeb,eqn:slicedLeb}, the coarea formula \cite{ambrosio2000functions}, and \cref{hyp:massonSB,hyp:LebBeta} to find
\begin{equation}\label{bdd:cube1}
\begin{aligned}
&
\limsup_{\epsilon \to 0} \Per_\epsilon(A_\eta ; Q(0,r))    
\\
&\leq \int_{Q'(0,r(1-\delta))\times\{a\}}\left(\rho_0 +\rho_1 \right) \de\H^{d-1} + \left(\|\rho_0\|_{L^\infty} + \|\rho_1\|_{L^\infty}\right)\H^{d-1}(\partial A_\eta \cap Q(0,r)\setminus Q(0,r(1-\delta)))  \\
&\leq \left(\rho_0^{-\nu}(0)  + \rho_1^{-\nu}(0)\right) r^{d-1} + C(\theta)\eta r^{d-1} + C\ksr{\left( \delta (1+\|\nabla g_0\|_{L^\infty(Q'(0,r))}) + \eta\right)}r^{d-1} \\
&\leq  \left(\rho_0^{-\nu}(0)  + \rho_1^{-\nu}(0)\right) \H^{d-1}(\partial A \cap Q(x,r)) + C(\delta + \eta) r^{d-1} \\
&\leq  \int_{\partial A \cap Q(x,r)} \beta_\nu\de\H^{d-1} + C(\delta + \eta)r^{d-1} .
\end{aligned}
\end{equation}
As the constant $C>0$ arising in the previous inequality is independent of $x$ and $r$, taking $\delta  =\eta$, up to redefinition of $A_\eta$ by $A_{\eta/C}$, the proof of this case is concluded.

\textbf{Case 2: $\beta_\nu (0) = \rho_0^{-\nu}(0) + \rho_1^{\nu}(0)$.}
For $A = A_\eta$, the $L^1$ estimate of \labelcref{eqn:almostOptCube} is trivially satisfied. By \cref{lem:traceOnC1graph}, we have that
\begin{equation*}
\limsup_{\epsilon \to 0} \Per_\epsilon(A_\eta ; Q(0,r)) = \int_{\partial A \cap Q(x,r)} \left(\rho_0^{-\nu} +\rho_1^{\nu} \right) \de\H^{d-1}.
\end{equation*}
Assuming that the limits \cref{hyp:goodOrientation,hyp:LebBeta} are satisfied up to error $\eta$ for $r<r_0 = r(0,\eta)$, as in \labelcref{bdd:cube1} we can estimate
 \begin{equation}\nonumber
 \begin{aligned}
 \int_{\partial A \cap Q(x,r)} \left(\rho_0^{-\nu} +\rho_1^{\nu}\right) \de\H^{d-1} \leq & \,(\rho_0^{-\nu}(0) +\rho_1^{\nu}(0))\mathcal{H}^{d-1}(\partial A \cap Q(x,r)) + C\eta r^{d-1} \\ 
 = & \,\beta_\nu (0)\mathcal{H}^{d-1}(\partial A \cap Q(x,r)) + C\eta r^{d-1}  \\
 \leq & \, \int_{\partial A \cap Q(x,r)}\beta_\nu \de\mathcal{H}^{d-1} + C\eta r^{d-1}.
 \end{aligned}
 \end{equation}
 Once again, as $\eta>0$ is arbitrary, the proof of \labelcref{eqn:almostOptCube} is complete.

By the Morse measure covering theorem \cite{fonseca2007modern}, we may choose a finite collection of disjoint cubes $\{Q_i\}$ satisfying \labelcref{eqn:almostOptCube} compactly contained in $\Omega$ such that $$\H^{d-1}\left(\overline{\partial A \setminus \bigcup_i Q_i}^{\R^d}\right) = \H^{d-1}\left(\partial A \setminus \bigcup_i Q_i\right) <\eta \quad \text{ and }\quad \sum_i r_i^{d-1} \leq \ksr{\H^{d-1}(\partial A)}.$$ We define $A_\eta$ to be the set satisfying \labelcref{eqn:almostOptCube} in each of the finitely many disjoint cubes $Q_i$, and the same as $A$ outside of these cubes.
The $L^1$ estimate within the proposition statement is satisfied. For each point $x\in F:=\overline{\partial A_\eta \setminus \bigcup_i Q_i}^{\R^d}=\overline{\partial A \setminus \bigcup_i Q_i}^{\R^d}$, there is an open cube $C_x$ such that $F\cap \overline{C_x}^{\R^d}$ is given as the $C^1$-graph of a compact set up to rotation. Applying the Besicovitch covering theorem \cite{fonseca2007modern}, we may find a finite subset of $\{C_x\}$, given by $\{C_j\}$, covering $F$ such that each point $x$ belongs to $C_j$ for at most $C(d)$ many~$j$.

Noting that $$\lim_{\epsilon\to 0}\mathcal{M}_\epsilon(F\cap C_j ; \R^d)  = \mathcal{H}^{d-1}(F \cap \overline{C_j}^{\R^d}) =\mathcal{H}^{d-1}(F \cap C_j) $$ by \cref{prop:minkowskiContent} and using the properties of the selected $Q_i,$ we then estimate
\begin{equation}\nonumber
\begin{aligned}
\limsup_{\epsilon \to 0} \Per_\epsilon(A_\eta ; \Omega) \leq & \sum_i  \limsup_{\epsilon \to 0}\Per_\epsilon(A_\eta ; Q_i) + 2(\|\rho_0\|_{L^\infty} + \|\rho_1\|_{L^\infty}) \sum_j \limsup_{\epsilon \to 0} \mathcal{M}_\epsilon(F\cap C_j ; \R^d) \\
\leq & \int_{\partial A \cap\bigcup_i Q_i}\beta_\nu \de\H^{d-1} + C\eta\mathcal{H}^{d-1}(\partial A) + C(\rho_0,\rho_1)\sum_j \mathcal{H}^{d-1}(\ksr{F} \cap C_j) \\
\leq & \int_{\partial A}\beta_\nu \de\H^{d-1} +C(\rho_0,\rho_1, d, A)\eta.
\end{aligned}
\end{equation}
As $\eta > 0 $ is arbitrary, we conclude the proof of the proposition.
\end{proof}

We now complete the proof of the $\limsup$ bound in \cref{thm:limsup2}, and thereby the Gamma-convergence result of \cref{thm:gamma}.

\begin{proof}[Proof of \cref{thm:limsup2}]
As usual, we denote $\epsilon_k$ by $\epsilon.$ By \cref{thm:gammaCompact}, we may without loss of generality assume that $\chi_A \in BV(\Omega).$

\textbf{Step 1: Construction of an approximating ``smooth" set.}
For $\eta>0$, we apply the approximation result \ksr{\cite[Theorem C]{dePhilippis2017}} to find \ksr{$u\in SBV(\Omega)$} such that 
\begin{equation}\label{eqn:SBVsmoothApprox}
\begin{aligned}
&\|u-1/2\|_{L^\infty(\Omega)}\leq 1/2, \\
& \mathcal{H}^{d-1}(J_{u}\triangle \partial^*A)<\eta, \\
& J_{u}\subset M,  \text{ and } \mathcal{H}^{d-1}(M\setminus J_{u}) <\eta, \\
& M \subset\subset \Omega \text{ is a compact } C^1\text{-manifold with boundary (possibly empty)} ,\\
& u \in C^\infty(\Omega\setminus M), \\
& \|u - \chi_A\|_{BV(\Omega)} <\eta.
\end{aligned}
\end{equation}
\ksr{We note that the approximation is found by applying \cite[Theorem C]{dePhilippis2017} to $\chi_A-1/2$, and the $L^\infty$ bound follows from the comment at the top of page 372 therein (in fact, $\mathcal{H}^{d-1}(M\setminus J_u) = 0$ at this point). }

We will select a level-set of $u$ to approximate $A$, and to do this, we will need to know that the boundary of the level-set is well-behaved: away from $M$, it will suffice to apply Sard's theorem to see this is a $C^1$-manifold; however, it is possible the level-set boundary oscillates as it approaches $M$ and creates a large intersection. To ensure this doesn't happen, we begin by modifying $u$ so that 
\begin{equation}\label{eqn:smoothuptoman}
u \text{ is }C^1 \text{ up to the manifold }M \text{ and }\partial \Omega,
\end{equation} 
for which the precise meaning will become apparent.

To modify $u$ to satisfy \labelcref{eqn:smoothuptoman}, we locally use reflections to regularize. We remark that the trace is well defined on $M$, and up to a small extension of the manifold at $\partial M$, we can assume that $u^{+} = u^-$ in a neighborhood of $\partial M$ (meaning the manifold boundary). We modify $u$ as follows: For each $x \in M \cap \overline{\{u^+ \neq u^-\}}$ we choose $r_x>0$ such that $M\cap B(x,r_x)$ is a graph and $\overline{B(x,r)} \cap \partial M = \emptyset$. Consider a partition of unity $\{\psi_i\}$ with respect to a (finite) cover $\{B(x_i,r_{i})\}$ of $\overline{\{u^+ \neq u^-\}}.$ For each ball $B(x_i,r_i)$, define $M_i^\pm$ to be the ball intersected with the sub- or super-graph. In $M_i^\pm$, we can reflect and mollify $u\psi_i$. Choosing fine enough mollifications, restricting to $M_i^\pm$, and adding together the mollified functions (and $u(1-\sum_i\psi_i)$) provides the desired approximation satisfying \labelcref{eqn:smoothuptoman} and preserving the relations in \labelcref{eqn:SBVsmoothApprox}. Similarly, one may smoothly extend $u$ to $\R^d$ as $M \subset\subset \Omega$.

\ksr{\textbf{Substep 1.1: $\partial \{u>s\}$ approximates $\partial A$.}} Fixing $\theta \in (0,1)$, we show there is a $\theta$-fraction of $s\in (3/8,5/8)$ such that $A_s:=\{u>s\}$ is a good level-set approximating $A$, in the sense that, $\partial A_s$ is sufficiently regular (as for the next step) and
\begin{equation}\label{est:setAs}
\|\chi_{A_s} - \chi_A\|_{BV(\Omega)}\leq C(\theta)\eta.
\end{equation}
First, for any $s \in (3/8,5/8),$ we bound the $L^1$-norm by \labelcref{eqn:SBVsmoothApprox} as
\begin{equation}\label{eqn:setAsL1}
\begin{aligned}
\int_{\Omega} |\chi_{A_s} - \chi_A|\de x = & \, \mathcal{L}^d(\{u\leq s\}\cap A) + \mathcal{L}^d(\{u>s\}\cap A^c) \\
\leq & \left(\frac{1}{1-s} + \frac{1}{s}\right)\int_{\Omega} |u-\chi_A|\de x \leq C\eta.
\end{aligned}
\end{equation}
To control the gradient of $\chi_{A_s} - \chi_A$, first note that 
\begin{equation}\label{eqn:BVnormbreakdown}
    |D(\chi_{A_s} - \chi_A)|(\Omega)=\H^{d-1}(\partial^* \{u>s\} \triangle \partial^* A) +2\H^{d-1}(\partial^* \{u>s\} \cap \partial^* A \cap \{\nu_A \neq \nu_{A_s}\}),
\end{equation}
where we let $\nu_A$ and $\nu_{A_s}$ denote the measure-theoretic inner normals. To control the first term on the right-hand side of \labelcref{eqn:BVnormbreakdown}, we have
\begin{align}
\H^{d-1}(\partial^* \{u>s\} \triangle \partial^* A) =& \, \H^{d-1}(\partial^* \{u>s\} \setminus (J_u \cup \partial^* A)) \label{eqn:term1}\\
&+ \H^{d-1}(\partial^*\{u>s\}\cap J_u \setminus \partial^* A) \label{eqn:term2}\\
&+ \H^{d-1}(\partial^* A \setminus (J_u \cup \partial^* \{u>s\})) \label{eqn:term3} \\
&+ \H^{d-1}(\partial^* A \cap J_u \setminus \partial^* \{u>s\}). \label{eqn:term4}
\end{align}
We show that this symmetric difference can be chosen to be small. To control \labelcref{eqn:term1}, note that by the coarea formula (see \cite{ambrosio2000functions})
\begin{equation}\nonumber
\int_{-\infty}^\infty \mathcal{H}^{d-1}(\partial^*\{u>t\} \setminus (J_u \cup \partial^* A)) \de t  = \|\nabla u\|_{L^1(\Omega \setminus (J_u \cup \partial^* A))}\leq \|u - \chi_A\|_{BV(\Omega)}\leq \eta.
\end{equation}
Consequently, by \cref{lem:thetaFrac}, for a $\theta$-fraction of $s \in  (3/8,5/8)$, we have that
\begin{equation}\nonumber
\mathcal{H}^{d-1}(\partial^*\{u>s\} \setminus (J_u \cup \partial^* A)) \leq C(\theta) \eta.
\end{equation}
To estimate \labelcref{eqn:term2} and \labelcref{eqn:term3}, we have
\begin{equation}\label{eqn:pulloutSets}
\H^{d-1}(\partial^*\{u>s\}\cap J_u \setminus \partial^* A) + \H^{d-1}(\partial^* A \setminus (J_u \cup \partial^* \{u>s\})) \leq \H^{d-1}( J_u \triangle \partial^* A) \leq \eta.
\end{equation}
Finally, to control \labelcref{eqn:term4}, note that 
$$\partial^* A \cap J_u \setminus \partial^* \{u>s\} \subset  \partial^* A \cap J_u \cap \{|(u^+ - u^-) - 1|>1/4\}.$$ To see this, we prove the converse inclusion and suppose $x \in \partial^* A \cap J_u \cap \{|(u^+ - u^-) - 1|\leq 1/4\};$
\ksr{in particular, at such a point we have $u^+(x)\geq u^-(x) + 3/4$. Given the $L^\infty$ bound of \labelcref{eqn:SBVsmoothApprox}, we have $u^-(x) \geq 0$ and $1 \geq u^+(x)$.
Putting these facts together, we see that} $u^+\ksr{(x)} \geq 3/4 >1/4\geq u^-\ksr{(x)}$, showing that for $s \in  (3/8,5/8)$ we have $x\in \partial^*\{u>s\}$.
Thus, we can estimate \labelcref{eqn:term4} as follows:
\begin{equation}\label{est:term4} 
\begin{aligned}
\H^{d-1}(\partial^* A \cap J_u \setminus \partial^* \{u>s\}) \leq &\, \H^{d-1}(\partial^* A \cap J_u \cap \{|(u^+ - u^-) - 1|>1/4\}) \\
\leq &\, 4 \int_{\partial^* A \cap J_u}|(u^+ - u^-) - 1| \de\H^{d-1}\\
\leq &\, 4 \int_{\partial^* A \cap J_u}|(u-\chi_A)^+ - (u-\chi_A)^- | \de\H^{d-1}\\
\leq &\,  4\| u - \chi_A\|_{BV(\Omega)} \leq 4\eta.
\end{aligned}
\end{equation}
With the above estimates, up to redefinition of $\theta$, we have that
\begin{equation}\label{est:setAsGrad}
 \H^{d-1}(\partial^* \{u>s\} \triangle \partial^* A) <C(\theta)\eta
\end{equation}
for a $\theta$ portion of $s \in (3/8,5/8).$ 

We control the second right-hand side term in \labelcref{eqn:BVnormbreakdown} as follows. As in \labelcref{eqn:pulloutSets}, we have $\H^{d-1}(\partial^* \{u>s\} \cap \partial^* A \cap \{\nu_A \neq \nu_{\{u>s\}}\} \setminus J_u)\leq \H^{d-1}( J_u \triangle \partial^* A) < \eta$.  Then we note that
$$\int_{J_u\cap \partial^* A} |u^{\nu_A} - u^{-\nu_A} - 1|\de \H^{d-1} \leq \|u-  \chi_A\|_{BV(\Omega)}\leq \eta. $$
Consequently by \cref{lem:thetaFrac}, $u^{\nu_A} - u^{-\nu_A} > 3/4$ in $J_u\cap \partial^* A$ outside of a set with $\H^{d-1}$-measure less than $C\eta.$ As the normal $\nu_{A_s}$ coincides with the direction of positive change for $u$, for $s \in (3/8,5/8),$ we have that $\nu_A = \nu_{A_s}$ outside of a small set controlled by $\eta.$ In other words, we have that 
$$\H^d(\partial^* \{u>s\} \cap \partial^* A \cap \{\nu_A \neq \nu_{\{u>s\}}\}) < C\eta.$$
Putting this estimate together with the bounds \labelcref{eqn:setAsL1} and \labelcref{est:setAsGrad}, we conclude \labelcref{est:setAs}.

\ksr{
\textbf{Substep 1.2: Regularity of $\partial A_s$.} We \emph{claim} that for almost every choice of $s \in (3/8,5/8)$, $\overline{\partial A_s}^{\R^d}$ is contained in the compact image of a Lipschitz function $f:\R^d\to \overline \Omega$, i.e., $f(K) = \overline{\partial A_s}^{\R^d}$ for some $K\subset\subset \R^d$, and there is compact set $N \subset \overline\Omega$ with $\mathcal{H}^{d-1}(N) = 0$, such that for any point $x\in \overline{\partial A_s}^{\R^d} \setminus N$, there is a radius $r_x$ such that $\overline{\partial A_s}^{\R^d} \cap B(x,r_x) = \partial A_s \cap B(x,r_x)$ is a $C^1$ surface.

Note first by Sard's theorem, for almost every $s\in (0,1)$, the set $\partial A_s \cap \Omega \setminus \overline{\{u^+\neq u^-\}}$ is a $C^1$ surface away from $M\cup \partial \Omega$ (i.e., locally in $\Omega\setminus M$). We will show that the claim holds locally in $\overline{\{u^+\neq u^-\}}\cup \partial \Omega$, meaning that: for any $x_0 \in \overline{\{u^+\neq u^-\}}\cup \partial \Omega$, there is a radius $r>0$ such that, for almost every $s \in (0,1)$, $\overline{\partial A_s}^{\R^d} \cap \overline{B(x,r)}$  satisfies the claim, with the amendment that for $x\in (\overline{\partial A_s}^{\R^d} \setminus N)\cap B(x_0,r)$, there is $r_x>0$ such that $\overline{\partial A_s}^{\R^d} \cap B(x,r_x) = \partial A_s \cap B(x,r_x)$ is a $C^1$ surface for some compact set $N\subset \overline{\Omega}$ (this allows us to avoid proving anything on $\partial B(x_0,r)$). With the claim satisfied locally, a covering argument concludes the claim.

We will assume that $x_0\in \overline{\{u^+\neq u^-\}}$, as the case of $x_0\in \partial \Omega$ is simpler. Recall that we chose $M$ so that ${\rm dist}(\overline{\{u^+\neq u^-\}},\partial M)>0$, so that there is $r>0$ such that $B(x_0,2r) \cap M$ is a $C^1$ surface. Let $M^+$ and $M^-$ be the associated super-graph and sub-graph in $B(x_0,2r)$, respectively. By \labelcref{eqn:smoothuptoman}, $u|_{M^\pm}$ has a $C^1$ extension to $\R^d$, which we denote by $u^{{\rm ext}, \pm}$. Similarly, denoting the trace of $u$ from $M^\pm$ onto $M\cap B(x_0,2r)$ by $u^\pm,$ we have that $u^\pm$ belongs to $C^1(M \cap B(x_0,2r))$. Applying Sard's theorem three times, in $M^\pm$ and in $M \cap B(x_0,2r)$, we find that for almost every $s\in (0,1)$
$$ \partial \{u^{{\rm ext},\pm }>s\} \cap \overline{B(x_0,r)} \quad \text{ and } \quad \partial_M \{u^\pm >s\} \cap \overline{B(x_0,r)}$$
are $C^1$ surfaces of dimension $d-1$ and $d-2$, respectively,
and further 
$$\partial \{u^{{\rm ext},\pm }>s\} \cap M \cap  \overline{B(x_0,r)} = \partial_M \{u^\pm >s\} \cap \overline{B(x_0,r)}, $$
where $\partial_M$ denotes the boundary with respect to the topology relative to $M$.
Clearly we have
$$\partial A_s \cap \overline{B(x_0,r)}  \subset (M\cup \partial \{u^{{\rm ext},+ }>s\} \cup \partial \{u^{{\rm ext},-}>s\} ) \cap \overline{B(x_0,r)},$$
and taking $N : = (\partial_M \{u^+ >s\} \cup \partial_M \{u^- >s\}) \cap \overline{B(x_0,r)}$, we have that the claim is locally satisfied.
}

\textbf{Step 2: Good and bad parts of $\partial A_s$.}
We now fix $s \in (3/8,5/8)$ such that the previous step holds with regularity (as in Substep 1.2) and estimate \labelcref{est:setAs} for $A_s : = \{u>s\}.$ We construct open sets \ksr{$U_1$ and $U_2$ which cover $\overline{\partial A_s}^{\R^d}.$} The set $U_1$ will \ksr{only contain points} of $\partial A_s$ where it is a smooth manifold. The set $U_2$ will contain the $\H^{d-1}$-small \ksr{collection of points in $\overline{\Omega}$ for which $\overline{\partial A_s}^{\R^d}$ is not given by a smooth manifold, i.e., containing $N$.}

We choose $N \subset U_2 \subset  \{y \st \dist(y,N)\leq \delta\}$ for $0<\delta \ll 1$ such that
\begin{equation}\label{eqn:U2condies}
\mathcal{H}^{d-1}(\ksr{\overline{\partial A_s}^{\R^d}} \cap U_2) \leq \mathcal{H}^{d-1}(\ksr{\overline{\partial A_s}^{\R^d}} \cap \{y: \dist(y,N)\leq \delta\}) <\eta;
\end{equation}
this is possible as $\bigcap_{\delta>0} \{y \st \dist(y,N)\leq \delta\} = N$ and $\mathcal{H}^{d-1}(N) = 0.$

For every $x \in \partial A_s \setminus \ksr{U_2}$, there is a local neighborhood contained in $\Omega$ for which the boundary is a graph. Consequently, we may choose $U_1 \subset \subset \Omega$ to be an open set with smooth boundary such that the boundary is covered with \ksr{$\overline{\partial A_s}^{\R^d}\subset U_1\cup U_2$,} the boundary of $U_1$ is not charged, that is, $\mathcal{H}^{d-1}(\partial U_1 \cap \partial A_s)$, and $U_1$ is well separated from the sets arising from interface intersections with $\overline{U_1} \cap \ksr{N} = \emptyset$.

\textbf{Step 3: Near optimal approximation.} We will now construct a new set $A_\eta$ by modifying $A_s$ in $U_1$ while in \ksr{$U_2$}, we leave the surface unchanged. We will show \ksr{that this set $A_\eta$ satisfies}
\begin{equation}\label{eqn:AetaApprox}
\|\chi_{A_\eta} -\chi_{A}\|_{L^1(\Omega)}\leq \eta \quad \text{ and } \quad \limsup_{\epsilon \to 0} \Per_\epsilon(A_\eta ; \rho)\leq \Per(A ; \rho) +\eta .
\end{equation}
Within $U_i$ the constructed $A_\eta$ will leave the set $A_s$ unchanged in a neighborhood of $\partial U_i$. Consequently, for sufficiently small $\epsilon > 0$, we have
\begin{equation}\label{eqn:perDecomp}
\Per_\epsilon (A_\eta; \Omega) \leq \Per_\epsilon (A_\eta; U_1) + 2(\|\rho_0\|_{L^\infty} + \|\rho_1\|_{L^\infty})\mathcal{M}_\epsilon (\partial A_\eta \cap \ksr{U_2}),
\end{equation}
where $\mathcal{M}_\epsilon$ is defined in \labelcref{eqn:minkowskiContent}. 
By properties \labelcref{eqn:U2condies}, one can argue as at the end of the proof of \cref{prop:smoothRecovery} (with $\{C_j\}$) to find that
\begin{equation}\label{eqn:MinkContEst}
\limsup_{\epsilon\to 0} \mathcal{M}_\epsilon (\partial A_\eta \cap \ksr{U_2};\R^d) \leq C\eta.
\end{equation}
Further, by \cref{prop:smoothRecovery}, we can construct $A_\eta$ in $U_1$ such that $A_\eta = A_s$ in a neighborhood of $\partial U_1$,
\begin{equation}\label{eqn:Step2Claim}
\|\chi_{A_s} - \chi_{A_\eta}\|_{L^1(\Omega\cap U_1)}\leq \eta, \quad \text{ and } \quad \limsup_{\epsilon \to 0} \Per_\epsilon (A_\eta; U_1) \leq \int_{\partial A_s \cap U_1} \ksr{\beta_\nu} \de\mathcal{H}^{d-1} + \eta.
\end{equation}

With this, using \labelcref{est:setAs}, we see that the $L^1$ estimate in \labelcref{eqn:AetaApprox} immediately follows. To obtain the $\limsup$ inequality, we apply the decomposition \labelcref{eqn:perDecomp}, the estimate of the Minkowski content \labelcref{eqn:MinkContEst}, and the difference between $\partial A_s$ and $\partial^* A$ in \labelcref{eqn:BVnormbreakdown} to find
\begin{equation}\nonumber
\begin{aligned}
\limsup_{\epsilon \to 0} \Per_\epsilon(A_\eta ; \Omega) \leq & \int_{\partial A_s\cap U_1 } \ksr{\beta_\nu} \de \mathcal{H}^{d-1} + C\eta \\
\leq & \int_{\partial^* A} \ksr{\beta_\nu} \de \mathcal{H}^{d-1} + \|\ksr{\beta_\nu}\|_{L^\infty}\ksr{|D(\chi_{A_s} - \chi_A)|(\Omega)} +  C\eta\leq  \int_{\partial^* A} \ksr{\beta_\nu} \de \mathcal{H}^{d-1} +C\eta.
\end{aligned}
\end{equation}
Up to redefinition of $\eta$ to absorb the constant $C$, this concludes \labelcref{eqn:AetaApprox}, which by a diagonalization argument concludes the theorem. 
\end{proof}

For use in the next section, we highlight that the approximation introduced in the above proof may be used as a near optimal constant recovery sequence.

\begin{corollary}\label{cor:nearOptimal}
If the conditions of \cref{thm:gamma} hold true, then for all measurable sets $A\subset\R^d$ with $\Per(A;\bm{\rho})<\infty$ and for all $\eta>0$ there is a set $A_\eta$ with smooth boundary away from a finite union of $d-2$-dimensional manifolds having transverse intersection with the domain boundary, in the sense that 
\begin{equation}\nonumber
\H^{d-1}(\partial \Omega \cap \overline{\partial A_\eta}^{\R^d}) = 0,
\end{equation}
and such that
\begin{equation}\nonumber
\|\chi_{A_\eta} -\chi_{A}\|_{L^1(\Omega)}\leq \eta \quad \text{ and } \quad \limsup_{\epsilon \to 0} \Per_\epsilon(A_\eta ;\bm{\rho})\leq \Per(A ;\bm{\rho}) +\eta .
\end{equation}
\end{corollary}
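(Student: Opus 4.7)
The plan is to simply track the regularity of the approximating set produced in the course of the proof of \cref{thm:limsup2}, and note that it already enjoys all the properties stated in the corollary. So the proof will be short: I would rerun the construction and highlight the regularity.

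First I would recall that in Step 1 of the proof of \cref{thm:limsup2}, the $SBV$ approximation $u$ from \cite{dePhilippis2017} is further modified (via local reflections and mollifications near the jump set $M$) so that $u$ is of class $C^1$ up to both $M$ and $\partial\Omega$; call this preliminary step the ``smoothing'' step. Then Sard's theorem is applied to the three regions where $u$ is smooth (namely $M_i^\pm$ on either side of $M$, and the complement of $\overline{\{u^+\neq u^-\}}$), which shows that for $\mathcal{L}^1$-almost every level $s\in(3/8,5/8)$ the set $A_s:=\{u>s\}$ has the property that $\partial A_s\setminus M$ is a $C^1$-hypersurface and $\overline{\partial A_s\cap(\Omega\setminus M)}\cap M$ is contained in a finite union of $(d-2)$-dimensional submanifolds of $M$. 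For such $s$ we also have $\mathcal{H}^{d-1}(\partial\Omega\cap\overline{\partial A_s}^{\R^d})=0$ by the $C^1$-smoothness of $u$ up to $\partial\Omega$ (the level sets are pairwise disjoint there, so almost all of them are $\H^{d-1}$-null on $\partial\Omega$).

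Next, in Step 3 of the proof of \cref{thm:limsup2}, the set $A_\eta$ is obtained from $A_s$ by a local modification inside the open set $U_1\subset\subset\Omega$, while leaving $A_s$ unchanged in a neighborhood of $\partial U_1$, of $N$ (the singular set of $\partial A_s$ in $\Omega$) and of $\partial\Omega$. The local modifications inside $U_1$ provided by \cref{prop:smoothRecovery} consist of either shifting the $C^1$-graph that locally describes $\partial A_s$ by a small amount (Case 1) or leaving it unchanged (Case 2), inside a finite collection of disjoint cubes. In either case the regularity of the modified interface is preserved up to a finite union of $(d-2)$-dimensional submanifolds of $\overline\Omega$ (coming from the finitely many cube faces glued together and from the original singular set $N$). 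In particular $\overline{\partial A_\eta}^{\R^d}$ still meets $\partial\Omega$ only inside the $\H^{d-1}$-null set $\partial\Omega\cap\overline{\partial A_s}^{\R^d}$.

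Finally, the two bounds in the corollary are exactly the content of \labelcref{eqn:AetaApprox} in the proof of \cref{thm:limsup2}. The only subtlety is that the original statement of \cref{thm:limsup2} hides the regularity and transversality of $A_\eta$ behind a diagonalization in $\eta$; so the main (minor) obstacle is purely bookkeeping, namely verifying that the singular set and the touching set with $\partial\Omega$ survive the modifications of Step 3 without losing the $\H^{d-1}$-null boundary contact. Once this is observed, one can simply take the set $A_\eta$ produced in Step 3 of the proof of \cref{thm:limsup2} and the corollary follows directly.
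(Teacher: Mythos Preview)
Your proposal is correct and follows exactly the paper's approach: the paper does not give a separate proof of the corollary but simply remarks, immediately before its statement, that ``the approximation introduced in the above proof may be used as a near optimal constant recovery sequence.'' Your write-up makes this explicit by tracking the regularity of the set $A_\eta$ built in Steps~1--3 of the proof of \cref{thm:limsup2} and observing that \labelcref{eqn:AetaApprox} provides the two quantitative bounds, while the modifications of \cref{prop:smoothRecovery} occur only in $U_1\subset\subset\Omega$ and introduce at most finitely many $(d-2)$-dimensional edge singularities, leaving the $\H^{d-1}$-null contact with $\partial\Omega$ intact.
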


\section{Applications}
\label{sec:applications}

Having proved our main statement \cref{thm:gamma}, we now turn towards applications.
We deduce Gamma-convergence of the total variation functional which is associated to $\Per_{\eps}(\cdot;\bm{\rho})$ in \cref{ssec:total_variation}, in \cref{ssec:AT} we discuss the asymptotic behavior of adversarial training as $\eps\to 0$, and in \cref{ssec:graph} we define discretizations of the nonlocal perimeter on random geometric graphs and prove their Gamma-convergence.

\subsection{Gamma-convergence of total variation}
\label{ssec:total_variation}

We define the nonlocal total variation of $u\in L^1(\Omega)$:
\begin{align}\label{eq:nonlocal_TV}
    \TV_\eps(u;\bm{\rho}) 
    :=
    \frac{1}{\eps}\int_\Omega \left(\esssup_{B(x,\eps)\cap\Omega} u - u(x)\right) \rho_0(x) \de x
    +
    \frac{1}{\eps}\int_\Omega \left(u(x) - \essinf_{B(x,\eps)\cap\Omega} u\right) \rho_1(x) \de x.
\end{align}
One can check easily (see also \cite[Proposition 3.13]{bungert2022geometry} for $L^\infty$-functions) that $\TV_\eps$ satisfies the generalized coarea formula
\begin{align}\nonumber
    \TV_\eps(u;\bm{\rho})= 
    \int_\R \Per_\eps(\{u\geq t\};\bm{\rho})\de t,\quad u\in L^1(\Omega).
\end{align}
For any integrable function, the measure of the set of values $t$ such that level-sets $\{u=t\}$ have positive mass is zero. 
Consequently, as definition \labelcref{eq:nonlocal_perimeter} is invariant under modification by null-sets, we may rewrite this as
\begin{align}\label{eq:coarea}
    \TV_\eps(u;\bm{\rho})= 
    \int_\R \Per_\eps(\{u> t\};\bm{\rho})\de t,\quad u\in L^1(\Omega).
\end{align}
This motivates us to define a limiting version of this total variation as 
\begin{align}\label{eq:local_TV}
     \TV(u;\bm{\rho}) := 
     \int_\R \Per(\{u> t\};\bm{\rho})\de t,\quad u\in L^1(\Omega),
\end{align}
which is identified as the Gamma-limit of $\TV_\eps$ in the following theorem.
\begin{theorem}
Under the conditions of \cref{thm:gamma} it holds
\begin{align*}
    \TV_\eps(\cdot;\bm{\rho}) \overset{\Gamma}{\longrightarrow} \TV(\cdot;\bm{\rho})
\end{align*}
as $\eps\to 0$ in the topology of $L^1(\Omega)$.
\end{theorem}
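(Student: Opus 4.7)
Both $\TV_\eps(\cdot;\rho)$ and $\TV(\cdot;\rho)$ are built by integrating the corresponding perimeter over level sets via the coarea identities \labelcref{eq:coarea} and \labelcref{eq:local_TV}, so the plan is to lift the perimeter-level Gamma-convergence of \cref{thm:gamma} to the function level. The $\liminf$ inequality will follow by combining the set-level $\liminf$ (\cref{thm:liminf2}) with Fatou's lemma applied to the coarea integral. The $\limsup$ inequality will be obtained by first constructing recovery sequences for simple functions (gluing together the set-level recovery sequences from \cref{thm:limsup2} via the subadditivity of $\TV_\eps$) and then performing a standard diagonal extraction.

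\textbf{Liminf.} Suppose $u_\eps \to u$ in $L^1(\Omega)$. The pointwise layer-cake identity $|u_\eps(x) - u(x)| = \int_\R |\chi_{\{u_\eps > t\}}(x) - \chi_{\{u > t\}}(x)|\,\de t$ together with Fubini's theorem gives $\int_\R \|\chi_{\{u_\eps > t\}} - \chi_{\{u > t\}}\|_{L^1(\Omega)}\de t \to 0$. Extracting first a subsequence realizing $\liminf_\eps \TV_\eps(u_\eps;\rho)$ and then a further subsequence along which $\chi_{\{u_\eps > t\}} \to \chi_{\{u > t\}}$ in $L^1(\Omega)$ for almost every $t\in\R$, \cref{thm:liminf2} yields $\liminf_\eps \Per_\eps(\{u_\eps > t\};\rho) \geq \Per(\{u > t\};\rho)$ for a.e. $t$. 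Fatou's lemma applied to the coarea representation \labelcref{eq:coarea} then gives $\liminf_\eps \TV_\eps(u_\eps;\rho) \geq \TV(u;\rho)$.

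\textbf{Limsup.} The first step is to reduce to bounded non-negative $u$: addition of a constant leaves $\TV_\eps$ and $\TV$ unchanged (it amounts to a translation of the coarea integration variable), and truncation from above and below yields monotone $L^1$-approximations whose energies converge via monotone convergence in \labelcref{eq:local_TV}, so a diagonal argument allows us to assume $u\in L^\infty(\Omega)$ with $u\geq 0$. Next, choose partitions $0 = t_0^n < t_1^n < \cdots < t_{N_n}^n = \|u\|_{L^\infty}$ with mesh tending to zero, whose points avoid the (at most countable) set $\{t \st \mathcal{L}^d(\{u=t\})>0\}$ and form a Riemann partition for the integrable map $t\mapsto \Per(\{u>t\};\rho)$, and define
\begin{equation*}
    u_n := \sum_{i=1}^{N_n}(t_i^n - t_{i-1}^n)\chi_{A_i^n},\qquad A_i^n := \{u > t_{i-1}^n\}.
\end{equation*}
Then $u \leq u_n \leq u + \mathrm{mesh}$, so $u_n \to u$ in $L^1(\Omega)$, and $\TV(u_n;\rho) = \sum_i (t_i^n - t_{i-1}^n)\Per(A_i^n;\rho)$ is a Riemann sum for $\TV(u;\rho)$ which converges to it by the selection of partition points. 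For each $n$ and each $i$, \cref{thm:limsup2} supplies a set-level recovery sequence $A_i^{n,\eps}$ for $A_i^n$, and I set $u_n^\eps := \sum_i (t_i^n - t_{i-1}^n)\chi_{A_i^{n,\eps}}$. Exploiting the positive homogeneity of $\TV_\eps$ and its subadditivity
\begin{equation*}
\TV_\eps(f+g;\rho) \leq \TV_\eps(f;\rho) + \TV_\eps(g;\rho),
\end{equation*}
which follows at once from $\esssup_B(f+g) \leq \esssup_B f + \esssup_B g$ and the analogous bound for $\essinf_B$, one gets
\begin{align*}
    \limsup_{\eps \to 0}\TV_\eps(u_n^\eps;\rho)
    &\leq \sum_{i=1}^{N_n}(t_i^n - t_{i-1}^n)\limsup_{\eps\to 0}\Per_\eps(A_i^{n,\eps};\rho) \\
    &\leq \sum_{i=1}^{N_n}(t_i^n - t_{i-1}^n)\Per(A_i^n;\rho) = \TV(u_n;\rho).
\end{align*}
Since $u_n^\eps \to u_n$ in $L^1(\Omega)$, $u_n \to u$ in $L^1(\Omega)$, and $\TV(u_n;\rho)\to \TV(u;\rho)$, a standard diagonal extraction yields $u^\eps := u_{n(\eps)}^\eps$ with $u^\eps \to u$ in $L^1(\Omega)$ and $\limsup_\eps \TV_\eps(u^\eps;\rho) \leq \TV(u;\rho)$.

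\textbf{Main obstacle.} The geometric and measure-theoretic core of the argument has already been developed in \cref{thm:liminf2,thm:limsup2}, so the remaining work here is essentially bookkeeping: verifying the subadditivity and positive homogeneity of $\TV_\eps$, choosing the partitions so that the Riemann sums converge to $\TV(u;\rho)$, and performing the diagonal extraction at a rate slow enough that the sequence still converges in $L^1$ but fast enough that the set-level recovery errors do not accumulate. None of these pose a substantive difficulty, and the most delicate point is arguably the partition choice, which relies only on standard $L^1$ properties of the coarea integrand.
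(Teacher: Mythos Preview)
Your argument is correct. The paper, by contrast, dispatches the result in one line by invoking \cite[Proposition~3.5]{chambolle2010continuous}, a generic statement that Gamma-convergence of set functionals obeying a coarea formula lifts to the induced total variations; your liminf via Fatou on level sets and your limsup via simple-function approximation plus subadditivity of $\TV_\eps$ are precisely the ingredients one would use to prove that cited proposition, so you are effectively reproving it in context rather than taking a genuinely different route. The only step needing slightly more care than you indicate is the convergence of the sums $\sum_i (t_i^n - t_{i-1}^n)\Per(A_i^n;\rho)$ to $\TV(u;\rho)$: since $t\mapsto\Per(\{u>t\};\rho)$ is merely integrable, left-endpoint Riemann sums need not converge for arbitrary partitions with vanishing mesh. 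This is easily repaired by choosing in each subinterval a sample point $\tau_i^n$ at which the integrand does not exceed the subinterval average plus $n^{-2}$ (such points form a set of positive measure, so one can simultaneously avoid the countable exceptional set) and setting $A_i^n:=\{u>\tau_i^n\}$; nestedness and $\|u_n-u\|_{L^\infty}\leq 2\,\mathrm{mesh}$ are preserved. With that adjustment your construction goes through and simply trades a literature citation for a self-contained page of standard bookkeeping.
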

\begin{proof}
The result is a consequence of \cref{thm:gamma}, \labelcref{eq:coarea}, and \cite[Proposition 3.5]{chambolle2010continuous}---a generic Gamma-convergence result for functionals satisfying a coarea formula.
\end{proof}

We further provide a natural integral characterization of the limit energy \labelcref{eq:local_TV}, where we cannot directly argue via a density argument as $\beta$'s behavior on $d-1$-dimensional sets is not ``continuous" when approximated from the bulk.

\begin{proposition}
Under the conditions of \cref{thm:gamma} and for $\TV$ defined as in \labelcref{eq:local_TV}, the following representation holds
\begin{align}\nonumber
    \TV(u;\bm{\rho}) 
    =
    \int_\Omega \beta\left(\frac{D u}{\abs{D u}};\bm{\rho}\right)\de\abs{D u},\qquad u \in BV(\Omega).
\end{align}
\end{proposition}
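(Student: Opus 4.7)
My plan is to combine the coarea formula \labelcref{eq:local_TV} with the standard BV coarea formula of Fleming--Rishel, after reconciling the directional dependence of $\beta$.

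First I would expand the definition of $\TV(u;\rho)$ using \labelcref{eq:local_TV} and the definition \labelcref{eq:local_perimeter} of $\Per(\cdot;\rho)$ to obtain
\begin{equation*}
\TV(u;\rho)=\int_\R\int_{\partial^\ast\{u>t\}\cap\Omega}\beta\!\left(\nu_{\{u>t\}};\rho\right)\de\H^{d-1}\de t,
\end{equation*}
where $\nu_{\{u>t\}}=D\chi_{\{u>t\}}/|D\chi_{\{u>t\}}|$ is the measure-theoretic inner normal. The right-hand side is finite whenever $u\in BV(\Omega)$, and for $u\notin BV(\Omega)$ the usual coarea formula for $|Du|$ shows both sides are $+\infty$, so we may assume $u\in BV(\Omega)$.

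Second, I would invoke the classical identification of normals to level sets with the polar decomposition direction of $Du$. Precisely, writing $\nu_u:=Du/|Du|$ for the Radon--Nikodym direction, for $\mathcal{L}^1$-a.e.\ $t\in\R$ the level set $\{u>t\}$ has finite perimeter in $\Omega$, and at $\H^{d-1}$-a.e.\ point $x\in\partial^\ast\{u>t\}$ one has $\nu_{\{u>t\}}(x)=\nu_u(x)$ (see, e.g., Ambrosio--Fusco--Pallara, Theorem~3.108). Since $\beta(\cdot;\rho)(x)$ depends on $x$ only through the traces $\rho_i^{\pm\nu}(x)$ in the prescribed direction $\nu$, substitution gives $\beta(\nu_{\{u>t\}};\rho)(x)=\beta(\nu_u;\rho)(x)$ for a.e.\ such $(t,x)$. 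Hence
\begin{equation*}
\TV(u;\rho)=\int_\R\int_{\partial^\ast\{u>t\}\cap\Omega}\beta(\nu_u;\rho)\de\H^{d-1}\de t.
\end{equation*}

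Third, I would apply the Fleming--Rishel coarea formula in its measure form: for any non-negative Borel function $g:\Omega\to[0,\infty]$,
\begin{equation*}
\int_\Omega g\de|Du|=\int_\R\int_{\partial^\ast\{u>t\}\cap\Omega}g\de\H^{d-1}\de t.
\end{equation*}
Taking $g(x)=\beta(\nu_u(x);\rho)(x)$ and combining with the previous display yields the desired representation.

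The main technical point is to justify that $x\mapsto\beta(\nu_u(x);\rho)(x)$ is a well-defined $|Du|$-measurable function. This is where the BV regularity of $\rho_0,\rho_1$ is essential: on the absolutely continuous part of $Du$ the traces $\rho_i^{\pm\nu_u}$ reduce to the precise representative $\rho_i^\ast$ (defined $\mathcal{L}^d$-a.e.), on the jump set $J_u$ they are defined $\H^{d-1}$-a.e.\ in the direction $\nu_u$, and analogously on the Cantor part by Alberti's rank-one theorem applied to $Du$ together with the trace theory for BV functions on rectifiable sets. Once measurability is in hand, the argument reduces to two applications of the coarea formula and is routine.
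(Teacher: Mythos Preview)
Your proposal is correct and follows essentially the same route as the paper. Both arguments fix $\beta(\nu_u;\rho)$ as a function of $x$ and reduce the statement to the coarea identity $\int_\Omega g\,\de|Du|=\int_\R\int_{\partial^\ast\{u>t\}\cap\Omega}g\,\de\H^{d-1}\de t$ for a bounded nonnegative Borel $g$; you invoke this identity as known, whereas the paper spells out the monotone-class extension from open sets, and conversely you make the identification $\nu_{\{u>t\}}=\nu_u$ explicit while the paper absorbs it into the sentence ``treat $\beta$ as a fixed function''.
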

\begin{proof}
For $u$ fixed, we may define $\beta : = \beta\left(\frac{D u}{\abs{D u}};\bm{\rho}\right)$ for $\H^{d-1}$-almost every point and treat it as a fixed function. Given the properties of the jump set \cite[Section 3.6]{ambrosio2000functions}, it follows that $\beta$ has an $\H^{d-1}$-equivalent Borel representative.
We can then rewrite the equality in the proposition as
\begin{equation}\label{eqn:almostTVrep}
\int_\R \left[\int_{\partial^*\{u>t\}\cap \Omega} \beta  \de \H^{d-1}\right]\de t =  \int_\Omega \beta\de\abs{D u},
\end{equation}
where $\beta$ is a generic positive, bounded Borel measurable function. By a standard approximation argument, \labelcref{eqn:almostTVrep} will follow if we show that it holds for $\beta := \chi_{A}$ for any Borel measurable subset $A\subset \Omega.$

To extend to a generic Borel subset, define the class $\mathcal{S}:=\{A\subset \Omega: \labelcref{eqn:almostTVrep} \text{ holds for }\beta:=\chi_A\}.$ For $A$, an open subset of $\Omega$, and $\beta := \chi_A$, \labelcref{eqn:almostTVrep} reduces to the standard coarea formula for $BV$ functions \cite{ambrosio2000functions}. Consequently $\mathcal{S}$ contains all open subsets.
Noting that $\mathcal{S}$ satisfies the hypothesis of the monotone class (or $\pi-\lambda$) theorem \cite[Theorem 1.4]{EvansGariepy}, it follows that $\mathcal{S}$ contains all Borel subsets, concluding the proposition. 
\end{proof}

\subsection{Asymptotics of adversarial training}
\label{ssec:AT}

In this section we would like to apply our Gamma-convergence results to adversarial training \labelcref{eq:AT}.
For this we let $\mu\in\mathcal M(\Omega\times\{0,1\})$ be the measure characterized through
\begin{align}
    \mu(\cdot\times\{0\}) := \rho_0,
    \qquad
    \text{and}
    \qquad
    \mu(\cdot\times\{1\}) := \rho_1
\end{align}
and consider the following version of adversarial training
\begin{align}\label{eq:AT_Omega}
    \inf_{A\in\mathfrak B(\Omega)}\E_{(x,y)\sim\mu}\left[\sup_{\tilde x\in B(x,\eps)\cap\Omega}\abs{\chi_A(\tilde x)-y}\right],
\end{align}
which arises from \labelcref{eq:AT} by choosing $\X=\Omega$ equipped with the Euclidean distance and $\C := \{\chi_A \st A \in \mathfrak B(\Omega)\}$ as the collection of characteristic functions of Borel sets.
As proved in \cite{bungert2022geometry} the problem can equivalently be reformulated as
\begin{align}\label{eq:TV_regularization_equiv}
    \inf_{A\in\mathfrak B(\Omega)} \E_{(x,y)\sim\mu}[\abs{\chi_A(x)-y}] + \eps \Per_\eps(A;\bm{\rho}),
\end{align}
where $\Per_\eps(A;\bm{\rho})$ is the nonlocal perimeter \labelcref{eq:nonlocal_perimeter} for which we proved Gamma-convergence in \cref{sec:Gamma-convergence}.

Before we turn to convergence of minimizers of this problem, we first discuss an alternative and simpler model for adversarial training which arises from fixing the regularization parameter in front of the nonlocal perimeter in \labelcref{eq:TV_regularization_equiv} to $\alpha>0$:
\begin{align}\label{eq:TV_regularization}
    \inf_{A\subset\Omega} \E_{(x,y)\sim\mu}[\abs{\chi_A(x)-y}] + \alpha \Per_\eps(A;\bm{\rho}).
\end{align}
Unless for $\alpha=\eps$ this problem is not equivalent to adversarial training anymore but it can be interpreted as an affine combination of the non-adversarial and the adversarial risk:
\begin{align}
    \labelcref{eq:TV_regularization}
    =
    \inf_{A\subset\Omega}
    \frac{\alpha}{\eps}
    \E_{(x,y)\sim\mu}\left[\sup_{\tilde x\in B(x,\eps)\cap\Omega}\abs{\chi_A(\tilde x)-y}\right]
    +
    \left(1-\frac{\alpha}{\eps}\right)
    \E_{(x,y)\sim\mu}\left[\abs{\chi_A(x)-y}\right].
\end{align}
Gamma-convergence as $\eps\to 0$ is an easy consequence of \cref{thm:gamma} since \labelcref{eq:TV_regularization} is a continuous perturbation of a Gamma-converging sequence of functionals.
\begin{corollary}
Under the conditions of \cref{thm:gamma}, the functionals
\begin{align*}
    A \mapsto 
    \E_{(x,y)\sim\mu}[\abs{\chi_A(x)-y}] + \alpha \Per_\eps(A;\bm{\rho})
\end{align*}
Gamma-converge in the $L^1(\Omega)$ topology as $\eps\to 0$ to the functional
\begin{align*}
    A \mapsto 
    \E_{(x,y)\sim\mu}[\abs{\chi_A(x)-y}] + \alpha \Per(A;\bm{\rho}).
\end{align*}
\end{corollary}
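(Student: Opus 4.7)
The plan is to leverage the already-established Gamma-convergence of $\Per_\eps(\cdot;\rho)$ from \cref{thm:gamma} and treat the fidelity term as a continuous perturbation. It is a standard fact (see, e.g., \cite{braidesGamma2007,DalMasoBook}) that if $F_\eps \xrightarrow{\Gamma} F$ in some topology $\tau$ and $G$ is $\tau$-continuous, then $F_\eps + G \xrightarrow{\Gamma} F + G$. Thus the whole argument reduces to checking $L^1(\Omega)$-continuity of the fidelity functional.

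First, I would rewrite the fidelity in a form amenable to such an estimate. Since $\rho_0 = \mu(\cdot\times\{0\})$ and $\rho_1 = \mu(\cdot\times\{1\})$, we have
\begin{equation*}
\E_{(x,y)\sim\mu}[\abs{\chi_A(x)-y}] = \int_\Omega \chi_A(x)\,\rho_0(x)\de x + \int_\Omega (1-\chi_A(x))\,\rho_1(x)\de x.
\end{equation*}
Denote this functional by $F(A)$. For any two measurable sets $A,B\subset\Omega$, it holds
\begin{equation*}
\abs{F(A)-F(B)} \leq \left(\norm{\rho_0}_{L^\infty} + \norm{\rho_1}_{L^\infty}\right) \int_\Omega \abs{\chi_A - \chi_B}\de x,
\end{equation*}
which shows that $F$ is Lipschitz continuous with respect to the $L^1(\Omega)$-convergence of characteristic functions, using the $L^\infty$ assumption on the densities from \cref{thm:gamma}.

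Second, I would invoke the continuous-perturbation stability of Gamma-convergence: for any sequence $\chi_{A_k}\to\chi_A$ in $L^1(\Omega)$ with $\eps_k\to 0$, the $\liminf$ inequality gives
\begin{equation*}
\liminf_{k\to\infty}\bigl(F(A_k) + \alpha\Per_{\eps_k}(A_k;\rho)\bigr) = F(A) + \liminf_{k\to\infty}\alpha\Per_{\eps_k}(A_k;\rho) \geq F(A) + \alpha\Per(A;\rho),
\end{equation*}
where the equality uses $L^1$-continuity of $F$ and the inequality follows from \cref{thm:liminf2}. For the $\limsup$ inequality, given $A$ with $\Per(A;\rho)<\infty$, take the recovery sequence $(A_k)$ supplied by \cref{thm:limsup2}; since $\chi_{A_k}\to \chi_A$ in $L^1(\Omega)$, continuity of $F$ again gives $F(A_k)\to F(A)$, and the $\limsup$ of the perimeter term is controlled by $\alpha\Per(A;\rho)$, yielding the required bound. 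The case $\Per(A;\rho)=\infty$ is trivial since the constant sequence $A_k\equiv A$ works.

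There is no substantial obstacle here: the only ingredients are the $L^\infty$ control of $\rho_0,\rho_1$ and the Gamma-convergence of the perimeter, both already in hand. The argument is essentially a two-line corollary once the fidelity is shown to be $L^1$-continuous.
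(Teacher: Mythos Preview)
Your proposal is correct and follows exactly the approach the paper indicates: the paper simply remarks that the result ``is an easy consequence of \cref{thm:gamma} since \labelcref{eq:TV_regularization} is a continuous perturbation of a Gamma-converging sequence of functionals,'' and you have spelled out precisely that argument, verifying $L^1$-continuity of the fidelity term via the $L^\infty$ bound on the densities and then invoking the standard stability of Gamma-convergence under continuous perturbations.
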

Let us now continue the discussion of the original adversarial training problem \labelcref{eq:AT_Omega} (or equivalently \labelcref{eq:TV_regularization}).
In order to preserve the regularizing effect of the perimeter, it is natural to take the approach of \cite{burger2023variational,belenkin2022note}, developed in the context of Tikhonov regularization for inverse problems, and to consider the rescaled functional
\begin{align}
    J_\eps(A) := \frac{\E_{(x,y)\sim\mu}[\abs{\chi_A(x)-y}]-\inf_{B\in\mathfrak B(\Omega)}\E_{(x,y)\sim\mu}[\abs{\chi_B(x)-y}]}{\eps} + \Per_\eps(A;\bm{\rho}).
\end{align}
Obviously, this functional has the same minimizers as the original problem \labelcref{eq:AT_Omega,eq:TV_regularization_equiv}. 
Judging from the results in \cite{burger2023variational,belenkin2022note} one might hope that the Gamma-limit of $J_\eps$ is the functional
\begin{align}
    J(A) :=
    \begin{dcases}
    \Per(A;\bm{\rho}),\quad&\text{if}\quad A \in \argmin_{B\in\mathfrak B(\Omega)}\E_{(x,y)\sim\mu}[\abs{\chi_B(x)-y}],\\
    \infty,\quad&\text{else}.
    \end{dcases}
\end{align}
Indeed the $\liminf$ inequality is trivially satisfied:
\begin{lemma}\label{lem:liminf_AT}
Under the conditions of \cref{thm:gamma} it holds for any sequence of measurable sets $(A_k)_{k\in\N}\subset\Omega$ and any sequence $(\eps_k)_{k\in\N}\subset(0,\infty)$ with $\chi_{A_k}\to\chi_A$ in $L^1(\Omega)$ and $\lim_{k\to\infty}\eps_k=0$ that  
\begin{align*}
    J(A) 
    \leq
    \liminf_{k\to\infty}
    J_{\eps_k}(A_k).
\end{align*}
\end{lemma}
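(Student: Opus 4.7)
The plan is to exploit that $J_{\eps_k}(A_k)$ decomposes into two non-negative pieces: a fidelity-gap term and the nonlocal perimeter $\Per_{\eps_k}(A_k;\rho)$. The liminf on the perimeter is handled directly by the $\liminf$ part of \cref{thm:gamma}. The only remaining content is to show that if the total liminf is finite, then the limit $A$ must be a Bayes classifier, i.e., a minimizer of the fidelity. Without loss of generality, assume $\ell := \liminf_{k\to\infty} J_{\eps_k}(A_k) < \infty$ and pass to a (not relabeled) subsequence realizing this liminf.

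First I would observe that the fidelity functional $B \mapsto \E_{(x,y)\sim\mu}[\abs{\chi_B(x)-y}] = \int_\Omega \chi_B\rho_0\,\de x + \int_\Omega (1-\chi_B)\rho_1\,\de x$ is continuous with respect to $L^1(\Omega)$-convergence of characteristic functions, because $\rho_0,\rho_1\in L^\infty(\Omega)$. In particular,
\begin{equation*}
  \E_{(x,y)\sim\mu}[\abs{\chi_{A_k}(x)-y}] \;\longrightarrow\; \E_{(x,y)\sim\mu}[\abs{\chi_A(x)-y}]
  \quad\text{as }k\to\infty.
\end{equation*}
Setting $m := \inf_{B\in\mathfrak B(\Omega)}\E_{(x,y)\sim\mu}[\abs{\chi_B(x)-y}]$, the first summand in $J_{\eps_k}(A_k)$ equals $(\E_{(x,y)\sim\mu}[\abs{\chi_{A_k}(x)-y}] - m)/\eps_k \geq 0$. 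If $A$ were not a Bayes classifier, then $\E_{(x,y)\sim\mu}[\abs{\chi_A(x)-y}] - m > 0$, and by the convergence above together with $\eps_k\to 0$ this ratio would diverge to $+\infty$, contradicting finiteness of $\ell$. Hence $A\in\argmin_{B\in\mathfrak B(\Omega)}\E_{(x,y)\sim\mu}[\abs{\chi_B(x)-y}]$, so that $J(A) = \Per(A;\rho)$.

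It then suffices to compare the perimeter pieces. Since the fidelity-gap term is non-negative, we have $J_{\eps_k}(A_k) \geq \Per_{\eps_k}(A_k;\rho)$ for every $k$, and by the $\liminf$ inequality from \cref{thm:gamma} (equivalently \cref{thm:liminf2}),
\begin{equation*}
  J(A) = \Per(A;\rho) \;\leq\; \liminf_{k\to\infty}\Per_{\eps_k}(A_k;\rho) \;\leq\; \liminf_{k\to\infty} J_{\eps_k}(A_k),
\end{equation*}
which is the desired inequality.

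No step here is truly hard; the only thing to be careful about is the dichotomy argument showing that a finite liminf forces $A$ to be a Bayes classifier, which relies crucially on the $L^1$-continuity of the fidelity functional granted by $\rho_0,\rho_1\in L^\infty(\Omega)$.
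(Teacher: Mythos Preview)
Your proof is correct and follows essentially the same approach as the paper's: both split into the dichotomy of whether $A$ is a Bayes classifier, use $L^1$-continuity of the fidelity (granted by $\rho_0,\rho_1\in L^\infty(\Omega)$) to force $J(A)=\infty$ on one side, and invoke the $\liminf$ inequality of \cref{thm:gamma} together with non-negativity of the fidelity-gap term on the other. The only cosmetic difference is that you frame the argument by assuming $\ell<\infty$ from the outset, whereas the paper treats the two cases $\alpha>0$ and $\alpha=0$ in parallel.
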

\begin{proof}
If we assume that $\alpha:=\E_{(x,y)\sim\mu}[\abs{\chi_A(x)-y}]-\inf_{B\in\mathfrak B(\Omega)}\E_{(x,y)\sim\mu}[\abs{\chi_B(x)-y}]>0$ (and hence $J(A)=\infty$) then we have by continuity
\begin{align*}
    \alpha
    =
    \lim_{k\to\infty}
    \E_{(x,y)\sim\mu}[\abs{\chi_{A_k}(x)-y}]
    -
    \inf_{B\in\mathfrak B(\Omega)}\E_{(x,y)\sim\mu}[\abs{\chi_B(x)-y}]
    .
\end{align*}
Hence, we see 
\begin{align*}
    \liminf_{k\to\infty}J_{\eps_k}(A_k)
    &=
    \liminf_{k\to\infty}
    \frac{\E_{(x,y)\sim\mu}[\abs{\chi_{A_k}(x)-y}]-\inf_{B\in\mathfrak B(\Omega)}\E_{(x,y)\sim\mu}[\abs{\chi_B(x)-y}]}{\eps_k} + \Per_{\eps_k}(A_k;\bm{\rho})
    \\
    &\geq 
    \alpha\liminf_{k\to\infty}\frac{1}{\eps_k}
    =
    \infty
    =
    J(A).
\end{align*}
In the other case if $\alpha=0$ we use the $\liminf$ inequality from \cref{thm:gamma} to find
\begin{align*}
    \liminf_{k\to\infty}J_{\eps_k}(A_k)
    &=
    \liminf_{k\to\infty}
    \frac{\E_{(x,y)\sim\mu}[\abs{\chi_{A_k}(x)-y}]-\inf_{B\in\mathfrak B(\Omega)}\E_{(x,y)\sim\mu}[\abs{\chi_B(x)-y}]}{\eps_k} + \Per_{\eps_k}(A_k;\bm{\rho})
    \\
    &\geq 
    \Per(A;\bm{\rho})
    =
    J(A)
\end{align*}
since the first term in $J_{\eps_k}$ is non-negative.
\end{proof}
The $\limsup$ inequality is non-trivial and potentially even false.
Letting $(A_k)_{k\in\N}$ be a recovery sequence for the perimeter $\Per(A;\bm{\rho})$ of $A\in\argmin_{B\in\mathfrak B(\Omega)}\E_{(x,y)\sim\mu}[\abs{\chi_B(x)-y}]$ one has
\begin{align*}
    \limsup_{k\to\infty} J_{\eps_k}(A_k) 
    &=
    \limsup_{k\to\infty}
    \frac{\E_{(x,y)\sim\mu}[\abs{\chi_{A_k}(x)-y}]-\inf_{B\in\mathfrak B(\Omega)}\E_{(x,y)\sim\mu}[\abs{\chi_B(x)-y}]}{\eps_k} + \Per_{\eps_k}(A_k;\bm{\rho})
    \\
    &\leq 
    \limsup_{k\to\infty}
    \frac{\E_{(x,y)\sim\mu}[\abs{\chi_{A_k}(x)-y}]-\inf_{B\in\mathfrak B(\Omega)}\E_{(x,y)\sim\mu}[\abs{\chi_B(x)-y}]}{\eps_k}
    +
    \Per(A;\bm{\rho}).
\end{align*}
It obviously holds that
\begin{align*}
    \limsup_{k\to\infty}
    \frac{\E_{(x,y)\sim\mu}[\abs{\chi_{A}(x)-y}]-\inf_{B\in\mathfrak B(\Omega)}\E_{(x,y)\sim\mu}[\abs{\chi_B(x)-y}]}{\eps_k}
    +
    \Per(A;\bm{\rho})
    = 
    J(A),
\end{align*}
and hence, for the $\limsup$ inequality to be satisfied, we would need to make sure that
\begin{align*}
    \limsup_{k\to\infty}
    \frac{\E_{(x,y)\sim\mu}[\abs{\chi_{A}(x)-\chi_{A_k}(x)}]}{\eps_k}
    =
    \limsup_{k\to\infty}
    \frac{\int_\Omega\abs{\chi_{A}-\chi_{A_k}}(\rho_0+\rho_1)\de x}{\eps_k}
    =
    0.
\end{align*}
This requires that the recovery sequences converges sufficiently fast to $A$ in $L^1(\Omega)$, namely
\begin{align}\label{eq:fast_cvgc}
    \L^d(A \triangle A_k) = o(\eps_k),
\end{align}
and is not obvious from our proof of \cref{thm:limsup2}.
Even for smooth densities $\rho_0,\rho_1$ where the construction of the recovery sequences is much simpler, \labelcref{eq:fast_cvgc} is not obvious.

On the other hand, condition \labelcref{eq:fast_cvgc} for the validity of the $\limsup$ inequality only has to be satisfied for the minimizers (so-called Bayes classifiers) of the unregularized problem $\inf_{B\in\mathfrak B(\Omega)}\E_{(x,y)\sim\mu}[\abs{\chi_B(x)-y}]$ which have finite weighted perimeter.

This motivates to assume a so-called ``source condition'', demanding some regularity on these Bayes classifiers. 
In the field of inverse problems source conditions are well-studied and known to be necessary for proving convergence of variational regularization schemes \cite{benning2018modern,burger2023variational}.
Our first source condition---referred to as strong source condition---takes the following form:
\begin{align}\label{eq:source_condition_strong}\tag{sSC}
    \begin{split} 
        &\text{All Bayes classifiers
        $A^\dagger\in\argmin_{B\in\mathfrak B(\Omega)}\E_{(x,y)\sim\mu}[\abs{\chi_{B}(x)-y}]$ with $\Per(A^\dagger;\bm{\rho})<\infty$}
        \\
        &\text{possess a recovery sequence satisfying \labelcref{eq:fast_cvgc}.}
    \end{split}
\end{align}
Note that a Bayes classifier $A^\dagger$ admits a recovery sequence satisfying \labelcref{eq:fast_cvgc}, for instance, if $\partial A^\dagger$ is sufficiently smooth and the densities $\rho_0$ and $\rho_1$ are continuous.
In this case the constant sequence, which trivially satisfies \labelcref{eq:fast_cvgc}, recovers.
Under this strong condition we have proved the following  Gamma-convergence result:
\begin{proposition}[Conditional Gamma-convergence]
Under the conditions of \cref{thm:gamma} and assuming \labelcref{eq:source_condition_strong} it holds that
\begin{align*}
    J_\eps \overset{\Gamma}{\longrightarrow} J
\end{align*}
as $\eps\to 0$ in the $L^1(\Omega)$ topology.
\end{proposition}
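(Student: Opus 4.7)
The plan is to establish $\Gamma$-convergence by checking the two standard inequalities. The $\liminf$ inequality is already provided unconditionally by \cref{lem:liminf_AT}, so the remaining task is to produce a recovery sequence for every $A \in \mathfrak B(\Omega)$.

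I would first dispose of the trivial case. If $A$ is not a Bayes classifier or if $\Per(A;\rho)=\infty$, then $J(A)=\infty$ and any $L^1(\Omega)$-approximating sequence automatically satisfies $\limsup_k J_{\eps_k}(A_k) \leq J(A)$. Hence the only genuine case is when $A \in \argmin_{B\in\mathfrak B(\Omega)}\E_{(x,y)\sim\mu}[\abs{\chi_B(x)-y}]$ and $\Per(A;\rho)<\infty$.

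In this non-trivial case, the hypothesis \labelcref{eq:source_condition_strong} is tailor-made to deliver a recovery sequence $(A_k)_{k\in\N}$ for $\Per(A;\rho)$ with the additional quantitative control $\L^d(A\triangle A_k)=o(\eps_k)$. I would plug this sequence directly into $J_{\eps_k}$. The perimeter contribution is handled by the standard recovery bound $\limsup_{k\to\infty}\Per_{\eps_k}(A_k;\rho)\leq \Per(A;\rho)$. For the fidelity contribution, I would use the pointwise estimate $\abs{\chi_{A_k}(x)-y}-\abs{\chi_{A}(x)-y}\leq \abs{\chi_{A_k}(x)-\chi_{A}(x)}$ and the fact that $A$ attains the infimum, combined with $\rho_0,\rho_1\in L^\infty(\Omega)$, to get
\begin{align*}
    \frac{\E_{(x,y)\sim\mu}[\abs{\chi_{A_k}(x)-y}]-\inf_{B\in\mathfrak B(\Omega)}\E_{(x,y)\sim\mu}[\abs{\chi_B(x)-y}]}{\eps_k}
    \leq
    \norm{\rho_0+\rho_1}_{L^\infty}\,\frac{\L^d(A\triangle A_k)}{\eps_k},
\end{align*}
which vanishes as $k\to\infty$ by \labelcref{eq:fast_cvgc}. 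Adding the two estimates yields $\limsup_{k\to\infty} J_{\eps_k}(A_k)\leq \Per(A;\rho)=J(A)$.

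Since \labelcref{eq:source_condition_strong} is precisely designed to guarantee the $o(\eps_k)$ decay needed to kill the fidelity discrepancy divided by $\eps_k$, I do not anticipate any serious obstacle; the source condition does exactly the heavy lifting that the generic recovery sequence of \cref{thm:limsup2} cannot provide on its own. The argument is therefore essentially a bookkeeping of the chain of estimates already sketched in the paragraph motivating the source condition.
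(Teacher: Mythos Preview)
Your proposal is correct and follows exactly the approach the paper takes: the paper establishes the $\liminf$ inequality via \cref{lem:liminf_AT} and then, in the discussion immediately preceding the proposition, derives the $\limsup$ inequality by plugging the recovery sequence guaranteed by \labelcref{eq:source_condition_strong} into $J_{\eps_k}$ and using \labelcref{eq:fast_cvgc} together with the $L^\infty$ bound on $\rho_0+\rho_1$ to kill the fidelity term divided by $\eps_k$. Your write-up simply makes this argument explicit, including the trivial case $J(A)=\infty$.
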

In fact, a \ksr{substantially} weaker source condition suffices for compactness of solutions as $\eps\to 0$:
\begin{align}\label{eq:source_condition_weak}\tag{wSC}
    \begin{split} 
        &\text{There exists a Bayes classifier $A^\dagger\in\argmin_{B\in\mathfrak B(\Omega)}\E_{(x,y)\sim\mu}[\abs{\chi_{B}(x)-y}]$ with $\Per(A^\dagger;\bm{\rho})<\infty$}
        \\
        &\text{which possesses a recovery sequence satisfying \labelcref{eq:fast_cvgc}.}
    \end{split}
\end{align}
We get the following compactness statement assuming validity of this source condition.
\begin{proposition}[Conditional compactness]\label{prop:compactAT}
Under the conditions of \cref{thm:gammaCompact,thm:gamma} and assuming the source condition \labelcref{eq:source_condition_weak}, any sequence of solutions to \labelcref{eq:AT_Omega} is precompact in $L^1(\Omega)$ as $\eps\to 0$.
\end{proposition}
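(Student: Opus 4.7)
The plan is to leverage the weak source condition \labelcref{eq:source_condition_weak} to uniformly bound the nonlocal perimeter along a sequence of minimizers, and then invoke the compactness result \cref{thm:gammaCompact} to extract a convergent subsequence.

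Let $(A_\eps)$ be a sequence of solutions to \labelcref{eq:AT_Omega}, equivalently minimizers of $J_\eps$, and let $A^\dagger$ be the Bayes classifier supplied by \labelcref{eq:source_condition_weak} together with its recovery sequence $(A^\dagger_\eps)$ satisfying $\L^d(A^\dagger \triangle A^\dagger_\eps) = o(\eps)$ and $\limsup_{\eps \to 0} \Per_\eps(A^\dagger_\eps;\rho) \leq \Per(A^\dagger;\rho) < \infty$. First I would upper bound $J_\eps(A^\dagger_\eps)$: since $A^\dagger$ realizes the infimum of $B \mapsto \E_{(x,y)\sim\mu}[\abs{\chi_B(x)-y}]$, one has
\begin{align*}
    \frac{\E_{(x,y)\sim\mu}[\abs{\chi_{A^\dagger_\eps}(x)-y}]-\inf_{B}\E_{(x,y)\sim\mu}[\abs{\chi_B(x)-y}]}{\eps}
    \leq
    \frac{\int_\Omega \abs{\chi_{A^\dagger_\eps}-\chi_{A^\dagger}}(\rho_0+\rho_1)\de x}{\eps}
    \leq
    C\frac{\L^d(A^\dagger \triangle A^\dagger_\eps)}{\eps} \to 0,
\end{align*}
using $\rho_0+\rho_1 \in L^\infty(\Omega)$. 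Combined with the recovery bound on $\Per_\eps(A^\dagger_\eps;\rho)$, this yields $\limsup_{\eps \to 0} J_\eps(A^\dagger_\eps) \leq \Per(A^\dagger;\rho) < \infty$.

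Next, by the minimality of $A_\eps$, we have $J_\eps(A_\eps) \leq J_\eps(A^\dagger_\eps)$, so $J_\eps(A_\eps)$ is uniformly bounded. Since both summands defining $J_\eps$ are non-negative (the first because $A^\dagger$ is a Bayes classifier, the second by definition), it follows in particular that
\begin{align*}
    \limsup_{\eps \to 0}\Per_\eps(A_\eps;\rho) \leq \Per(A^\dagger;\rho) < \infty.
\end{align*}
At this point, \cref{thm:gammaCompact} applies directly: it provides a subsequence along which $\chi_{A_\eps} \to \chi_A$ in $L^1(\Omega)$ for some set $A$ of finite perimeter, which is precisely the desired precompactness.

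The only delicate point is verifying that the first (non-negative) term in $J_\eps(A^\dagger_\eps)$ really is $o(1)$, since this requires the fast convergence rate \labelcref{eq:fast_cvgc}; this is exactly what the source condition \labelcref{eq:source_condition_weak} is designed to ensure, so no obstacle arises. Everything else is a textbook ``minimizer inequality plus coercivity'' argument, the coercivity here being supplied by the nonlocal perimeter bound and the compactness theorem proved earlier.
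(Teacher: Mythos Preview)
Your proof is correct and follows essentially the same approach as the paper: compare the minimizer $A_\eps$ against the recovery sequence $A^\dagger_\eps$ supplied by \labelcref{eq:source_condition_weak}, use non-negativity of the fidelity term to isolate a uniform bound on $\Per_{\eps}(A_\eps;\rho)$, and conclude via \cref{thm:gammaCompact}. The only cosmetic difference is that you phrase the comparison through the rescaled functional $J_\eps$, whereas the paper writes the minimality inequality for the unrescaled problem \labelcref{eq:TV_regularization_equiv} and then subtracts the Bayes risk and divides by $\eps$; the two computations are line-for-line equivalent.
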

\begin{proof}
Let us take a sequence of solutions $A_\eps$ of \labelcref{eq:AT_Omega} for $\eps\to 0$.
Furthermore let $A_\eps^\dagger$ denote a recovery sequence for the Bayes classifier $A^\dagger$ which satisfies \labelcref{eq:source_condition_weak}.
Using the minimization property of $A_\eps$ it holds
\begin{align*}
    \E_{(x,y)\sim\mu}[\abs{\chi_{A_\eps}(x)-y}] + \eps \Per_\eps(A_\eps;\bm{\rho})
    \leq 
    \E_{(x,y)\sim\mu}[\vert\chi_{A_\eps^\dagger}(x)-y\vert] + \eps \Per_\eps(A_\eps^\dagger;\bm{\rho}).
\end{align*}
Subtracting the Bayes risk and rescaling by $\eps$, we have
\begin{align*}
    \begin{split}
    &\frac{\E_{(x,y)\sim\mu}[\abs{\chi_{A_\eps}(x)-y}] - \E_{(x,y)\sim\mu}[\abs{\chi_{A^\dagger}(x)-y}]}{\eps} + \Per_\eps(A_\eps;\bm{\rho})
    \\
    &\qquad
    \leq 
    \frac{\E_{(x,y)\sim\mu}[\vert\chi_{A_\eps^\dagger}(x)-y\vert]- \E_{(x,y)\sim\mu}[\abs{\chi_{A^\dagger}(x)-y}]}{\eps}
     + \Per_\eps(A_\eps^\dagger;\bm{\rho}).
     \end{split}
\end{align*}
Using that the leftmost term is non-negative, taking the $\limsup$, and using \labelcref{eq:source_condition_weak} yields
\begin{align*}
    \limsup_{\eps\to 0}\Per_\eps(A_\eps;\bm{\rho}) 
    \leq
    \limsup_{\eps\to 0}\Per_\eps(A_\eps^\dagger;\bm{\rho})
    \leq 
    \Per(A^\dagger;\bm{\rho})<\infty.
\end{align*}
Hence, we can apply \cref{thm:gammaCompact} and conclude.
\end{proof}
So far we have introduced the strong source condition \labelcref{eq:source_condition_strong} for proving Gamma-convergence and the weak one \labelcref{eq:source_condition_weak} for showing compactness.
As it turns out, for proving \cref{thm:convergence_AT}, concerning convergence of minimizers of adversarial training \labelcref{eq:AT_Omega}, it suffices to assume the source condition \labelcref{eq:source_condition} which is in the middle \ksr{but only slightly stronger than \labelcref{eq:source_condition_weak}}, i.e.,
$$\text{\labelcref{eq:source_condition_strong} $\implies$ \labelcref{eq:source_condition} $\implies$ \labelcref{eq:source_condition_weak}.}$$
This condition is the following:
\begin{align}\label{eq:source_condition}\tag{SC}
    \begin{split} 
        &\text{There exists a perimeter minimal Bayes classifier}
        \\
        &\qquad A^\dagger\in\argmin\left\lbrace \Per(A;\bm{\rho}) \st A \in \argmin_{B\in\mathfrak B(\Omega)}\E_{(x,y)\sim\mu}[\abs{\chi_{B}(x)-y}]\right\rbrace
        \\
        &\text{with $\Per(A^\dagger;\bm{\rho})<\infty$ which possesses a recovery sequence satisfying \labelcref{eq:fast_cvgc}.}
    \end{split}
\end{align}
Under this condition we can prove our last main result.
\begin{proof}[Proof of \cref{thm:convergence_AT}.]
Since \labelcref{eq:source_condition} implies \labelcref{eq:source_condition_weak}, by \cref{prop:compactAT} up to a subsequence it holds $A_\eps\to A$ in $L^1(\Omega)$.
Let $A_\eps^\dagger$ denote a recovery sequence for $A^\dagger$ satisfying \labelcref{eq:source_condition}.
\cref{lem:liminf_AT} implies 
\begin{align*}
    J(A)\leq \liminf_{\eps\to 0} J_\eps(A_\eps) \leq
    \limsup_{\eps\to 0} J_\eps(A_\eps^\dagger)
    \leq 
    \Per(A^\dagger;\bm{\rho}).
\end{align*}
Since $\Per(A^\dagger;\bm{\rho})<\infty$ we get that $J(A)<\infty$ and hence $A\in\argmin_{B\in\mathfrak B(\Omega)}\E_{(x,y)\sim\mu}[\abs{\chi_B(x)-y}]$ and $\Per(A;\bm{\rho})=J(A)\leq\Per(A^\dagger;\bm{\rho})$.
This shows that $A$ is a minimizer of the problem in \labelcref{eq:perimeter_minimal_solution}.
\end{proof}

\subsection{Gamma-convergence of graph discretizations}
\label{ssec:graph}

In this section we discuss a discretization of the nonlocal perimeter $\Per_\eps(\cdot;\bm{\rho})$ on a random geometric graph and prove Gamma-convergence in a suitable topology.
For this, let $G_n=(X_n,W_n)$ \ksr{for $n\in\N$} be a weighted \ksr{random geometric} graph with vertex set $X_n$ and weights $W_n$.
This means that the vertex set $X_n := \{x_1,\dots,x_n\}\subset\Omega$ \ksr{is a collection of independent and identically distributed (\textit{i.i.d.}) random variables with law} $\rho := \rho_0 + \rho_1$, and the weights are defined as $W_n(x,y) := \ksr{\phi}(\abs{x-y}/\eps_n)$ for some parameter $\eps_n>0$ and $\ksr{\phi}(t):=\chi_{\ksr{[0,1]}}(t)$ for \ksr{$t\in\R$}.

We can identify the graph vertices with their empirical measure $\nu_n := \frac{1}{n}\sum_{i=1}^n\delta_{x_i}$.
Thanks to \cite{trillos2015rate} (see also \cite[Theorem 2.5]{GarcSlep15}) and the fact that $\rho$ is strictly positive on $\Omega$, \ksr{with probability one} (almost surely) there exists maps $T_n : \Omega\to\Omega$ such that 
\begin{subequations}\label{eq:prop_transport_map}
    \begin{align}
        (T_n)_\sharp \rho &= \nu_n, \\
        \limsup_{n\to\infty}\frac{\norm{T_n-\operatorname{id}}_{L^\infty(\Omega)}}{\delta_n} &< \infty.
    \end{align}
\end{subequations}
Here the quantity $\delta_n>0$ is given by
\begin{align*}
    \delta_n :=
    \begin{dcases}
    \frac{(\log n)^\frac{3}{4}}{n^\frac{1}{2}},
    \quad&\text{if }d=2,\\
    \left(\frac{\log n}{n}\right)^\frac{1}{d},\quad&\text{if }d\geq 3,
    \end{dcases}
\end{align*}
which for $d\geq 3$ coincides with the asymptotic connectivity threshold for random geometric graphs.
We can use these transport maps to consider the measures
\begin{align}\label{eq:label_measures}
    \nu_n^i := (T_n)_\sharp \rho_i,\quad i \in \{0,1\}
\end{align}
which are the empirical measures of the graph points that are associated with the $i$-th density $\rho_i$.

We define a graph discretization of $\Per_\eps(A;\bm{\rho})$ for $A\subset X_n$ as
\begin{align*}
    E_n(A)
    :=
    \frac{1}{\eps_n}
    \nu_n^0(\left\lbrace
    x \in A^c \st \dist(x,A) < \eps_n
    \right\rbrace)
    +
    \frac{1}{\eps_n}
    \nu_n^1(\left\lbrace
    x \in A \st \dist(x,A^c) < \eps_n
    \right\rbrace)
\end{align*}
which effectively counts the number of points in an exterior strip around $A$ carrying the label $0$ and the number of points in an interior strip in $A$ carrying the label $1$.
\ksr{Note that, although the complement $A^c$ of a subset $A$ of the graph vertices $X_n$ is not a subset of $X_n$ anymore, the empirical measure $\nu_n^1$ only considers points in $A^c\cap X_n$ so one can just as well replace $A^c$ by $X_n \setminus A$.}
Note also that this graph model using \labelcref{eq:label_measures} assumes that the label distribution is performed according to the ground truth distributions $\rho_0$ and $\rho_1$. 
One can also treat more general labeling models such that \labelcref{eq:label_measures} is asymptotically satisfied as $n\to\infty$, but for the sake of simplicity we limit the discussion to the model above.

Using the weight function $W_n(x,y)$ we can equivalently express $E_n(A)$ as
\begin{align*}
    E_n(A)
    &=
    \frac{1}{\eps_n}
    \int_\Omega 
    \left(\max_{x \in X_n}W_n(x,y)\chi_{A}(x) - \chi_{A}(y)\right)
    \de\nu_n^0(y)
    \\
    &\qquad
    +
    \frac{1}{\eps_n}
    \int_\Omega 
    \ksr{\left(\max_{x \in X_n}W_n(x,y)\chi_{A^c}(x) - \chi_{A^c}(y)\right)}
    \de\nu_n^1(y)
    \\
    &=
    \frac{1}{\eps_n}
    \int_\Omega 
    \left(\max_{x \in X_n}\ksr{\phi}\left({\abs{x-y}}/{\eps_n}\right)\chi_{A}(x) - \chi_{A}(y)\right)
    \de\nu_n^0(y)
    \\
    &\qquad
    +
    \frac{1}{\eps_n}
    \int_\Omega 
    \ksr{\left(\max_{x \in X_n}\ksr{\phi}\left({\abs{x-y}}/{\eps_n}\right)\chi_{A^c}(x) - \chi_{A^c}(y)\right)}
    \de\nu_n^1(y).
\end{align*}
This graph perimeter functional combines elements of the graph perimeter studied in \cite{GarcSlep16} and the graph Lipschitz constant studied in \cite{roith2022continuum}.
Correspondingly, also the following Gamma-convergence proof bears similarities with both of these works. 
For proving Gamma-convergence of these graph perimeters to the continuum perimeter we employ the $TL^p$-framework, developed in \cite{GarcSlep16}.
Here, we do not go into too much detail regarding the definition and the properties of these metric spaces. 
We just define the space as the set of pairs of $L^p$ functions and measures $TL^p(\Omega) := \{(f,\mu) \st \mu \in \P(\Omega),\; f\in L^p(\Omega,\mu)\}$, where $\P(\Omega)$ is the set of probability measures on $\Omega$.
We highlight \cite[Proposition 3.12, 4.]{GarcSlep16} which says that in our specific situation with $\rho$ being a strictly positive absolutely continuous measure, convergence of $(u_n,\nu_n)\to (u,\rho)$ in the topology of $TL^p(\Omega)$ is equivalent to $u_n \circ T_n \to u$ in $L^p(\Omega)$ for the maps $T_n$ satisfying \labelcref{eq:prop_transport_map}.
\ksr{Furthermore, we would like to emphasize that the functionals $E_n$ are random variables since they depend on the given realization of the random variables which constitute the vertices $X_n$ of the graph. 
Still it is possible to prove Gamma-convergence of these functionals with probability one, meaning that Gamma-convergence might be violated only for a set of graph realizations which have zero probability.
We refer the interested reader to \cite[Definition 2.11]{GarcSlep16} for precise definitions.}

The following is the main result of this section and asserts Gamma-convergence of the functionals $E_n$ to the Gamma-limit from \cref{thm:gamma}.

\begin{theorem}\label{thm:gamma_graph}
Let the assumptions of \cref{thm:gamma} be satisfied.
If $\eps_n>0$ satisfies
\begin{align*}
    \lim_{n\to \infty}\eps_n = 0
    \qquad
    \text{and}
    \qquad
    \lim_{n\to \infty}\frac{\delta_n}{\eps_n} = 0,
\end{align*}
then with probability one it holds
\begin{align*}
    E_n
    \overset{\Gamma}{\longrightarrow}
    \Per(\cdot;\bm{\rho})
\end{align*}
as $n\to\infty$ in the $TL^1(\Omega)$ topology, and the following compactness property holds:
\begin{align*}
    \limsup_{n\to \infty}E_n(A_{n}) < \infty
    \quad
    \implies
    \quad
    (A_n)_{n\in\N} 
    \text{ is precompact in $TL^1(\Omega)$}.
\end{align*}
\end{theorem}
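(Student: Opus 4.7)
The plan is to transfer the Gamma-convergence and compactness statements from the graph energies $E_n$ to the continuum perimeters $\Per_{\eps_n^{\pm}}$ via the transport maps $T_n$ satisfying \labelcref{eq:prop_transport_map}, and then appeal to \cref{thm:gamma,thm:gammaCompact,cor:nearOptimal}. For any $A_n\subset X_n$, set $\tilde A_n:=T_n^{-1}(A_n)\subset\Omega$, so that $\chi_{A_n}\circ T_n=\chi_{\tilde A_n}$; by \cite[Proposition 3.12]{GarcSlep16}, convergence $(\chi_{A_n},\nu_n)\to(\chi_A,\rho)$ in $TL^1$ is equivalent to $\chi_{\tilde A_n}\to\chi_A$ in $L^1(\Omega)$.

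\textbf{Sandwich.} Using the equivalent second expression for $E_n$, change variables $y=T_n(z)$ and exploit $\nu_n^i=(T_n)_\sharp\rho_i$. The a.s.\ bound $\norm{T_n-\operatorname{id}}_\infty\leq C\delta_n$ and elementary triangle inequalities give the pointwise sandwich
\begin{equation*}
\esssup_{B(z,\eps_n^-)}\chi_{\tilde A_n}\;\le\;\max_{x\in X_n}\eta(\abs{x-T_n(z)}/\eps_n)\chi_{A_n}(x)\;\le\;\esssup_{B(z,\eps_n^+)}\chi_{\tilde A_n},
\end{equation*}
with $\eps_n^{\pm}:=\eps_n\pm 3C\delta_n$, and an analogous bound for the minimum. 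Integrating and using $\delta_n/\eps_n\to 0$ (so that $\eps_n^\pm\to 0$ and $\eps_n^\pm/\eps_n\to 1$) yields
\begin{equation*}
\frac{\eps_n^-}{\eps_n}\Per_{\eps_n^-}(\tilde A_n;\rho)\;\le\;E_n(A_n)\;\le\;\frac{\eps_n^+}{\eps_n}\Per_{\eps_n^+}(\tilde A_n;\rho).
\end{equation*}

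\textbf{Compactness, liminf, and limsup.} If $E_n(A_n)$ is bounded the left sandwich bound bounds $\Per_{\eps_n^-}(\tilde A_n;\rho)$, so \cref{thm:gammaCompact} supplies $L^1$-precompactness of $\chi_{\tilde A_n}$, i.e.\ $TL^1$-precompactness of $(\chi_{A_n},\nu_n)$. If $(\chi_{A_n},\nu_n)\to(\chi_A,\rho)$ in $TL^1$, the left bound combined with the liminf half of \cref{thm:gamma} (applied with radii $\eps_n^-$) gives $\liminf E_n(A_n)\geq\Per(A;\rho)$. For the limsup, when $\Per(A;\rho)<\infty$, apply \cref{cor:nearOptimal} to obtain, for each $\eta>0$, a set $A_\eta\subset\Omega$ with piecewise $C^1$ boundary such that $\norm{\chi_{A_\eta}-\chi_A}_{L^1}\le\eta$ and $\limsup_{\eps\to 0}\Per_\eps(A_\eta;\rho)\le\Per(A;\rho)+\eta$. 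Choose $A_n:=A_\eta\cap X_n$, so that $\tilde A_n=T_n^{-1}(A_\eta)$ differs from $A_\eta$ only in a $C\delta_n$-strip around $\partial A_\eta$ of Lebesgue measure $O(\delta_n\H^{d-1}(\partial A_\eta))$. This yields $\chi_{\tilde A_n}\to\chi_{A_\eta}$ in $L^1$ and, by the tube estimate of the next paragraph, $\Per_{\eps_n^+}(\tilde A_n;\rho)-\Per_{\eps_n^+}(A_\eta;\rho)\to 0$. Combining with the right sandwich bound and diagonalizing in $\eta$ produces a recovery sequence with $\limsup E_n(A_n)\leq\Per(A;\rho)$.

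\textbf{Main obstacle.} The delicate step is the tube estimate: a crude bound on $\L^d((\tilde A_n\triangle A_\eta)^{(\eps_n^+)})$ only gives $O(\eps_n\H^{d-1}(\partial A_\eta))$, which after division by $\eps_n$ does not vanish. One must exploit the $C^1$ structure of $\partial A_\eta$ (away from a $d{-}2$-dimensional singular set of vanishing $\H^{d-1}$-mass, handled through \cref{prop:minkowskiContent}): the two $\eps_n^+$-tubes agree outside an annular shell $\{z:\dist(z,\partial A_\eta)\in[\eps_n^+-C\delta_n,\eps_n^++C\delta_n]\}$ of radial thickness $2C\delta_n$, whose Lebesgue measure is $O(\delta_n\H^{d-1}(\partial A_\eta))$, producing the correct $\delta_n/\eps_n\to 0$ gain when weighted against $\rho_0+\rho_1\in L^\infty$.
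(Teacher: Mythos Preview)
Your proposal is correct and follows essentially the same route as the paper. Both arguments pull back $E_n$ through the transport maps $T_n$ to obtain a two-sided sandwich between continuum perimeters at perturbed radii $\eps_n\pm O(\delta_n)$, then invoke \cref{thm:gammaCompact} and the $\liminf$ half of \cref{thm:gamma} for compactness and the lower bound, and for the $\limsup$ use the near-optimal piecewise-$C^1$ approximant $A_\eta$ from \cref{cor:nearOptimal} together with Minkowski-content tube estimates (via \cref{prop:minkowskiContent}) to control the discrepancy between $\tilde A_n=T_n^{-1}(A_\eta\cap X_n)$ and $A_\eta$; the only difference is organizational—the paper writes the limsup error as two explicit integral differences rather than as $\Per_{\eps_n^+}(\tilde A_n;\rho)-\Per_{\eps_n^+}(A_\eta;\rho)$, which is the same computation.
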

\begin{proof}
The result is proved in \cref{lem:graph_liminf,lem:graph_limsup} which prove the liminf and the limsup inequalities. 
The proof of the compactness statement is implicitly contained in the proof of \cref{lem:graph_liminf}.
\end{proof}

\begin{lemma}[Discrete liminf inequality]\label{lem:graph_liminf}
Under the conditions of \cref{thm:gamma_graph} for any sequence $(A_n)\subset \ksr{X_n}$ such that $(\chi_{A_n},\nu_n)\to (\chi_A,\rho)$ in $TL^1(\Omega)$ it holds with probability one that
\begin{align*}
    \Per(A;\bm{\rho}) \leq \liminf_{n\to\infty} E_n(A_n).
\end{align*}
\end{lemma}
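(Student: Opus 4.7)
The plan is to transport the discrete problem back to the continuum and then apply the continuum $\liminf$ inequality of \cref{thm:liminf2}. With $T_n$ as in \labelcref{eq:prop_transport_map}, I would define $\tilde A_n := T_n^{-1}(A_n)\subset\Omega$, so that $\chi_{\tilde A_n} = \chi_{A_n}\circ T_n$ almost everywhere. By the characterization of $TL^1$-convergence recalled just above \cref{thm:gamma_graph}, the hypothesis $(\chi_{A_n},\nu_n)\to(\chi_A,\rho)$ in $TL^1(\Omega)$ is equivalent to $\chi_{\tilde A_n}\to\chi_A$ in $L^1(\Omega)$. Thus $(\tilde A_n)$ is a natural candidate sequence for \cref{thm:liminf2}.

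Next, using the reformulation of $E_n$ through the graph weights $W_n$ and the pushforward identities $\nu_n^i = (T_n)_\sharp \rho_i$, I would change variables via $T_n$ to write
\begin{align*}
    \eps_n E_n(A_n) = \int_\Omega \chi_{S_n^0}(T_n(y))\,\rho_0(y)\de y + \int_\Omega \chi_{S_n^1}(T_n(y))\,\rho_1(y)\de y,
\end{align*}
where $S_n^0 := \{z\in X_n\setminus A_n \st \dist(z,A_n\cap X_n)\leq\eps_n\}$ and $S_n^1 := \{z\in A_n\st \dist(z,X_n\setminus A_n)\leq\eps_n\}$ are the outer and inner discrete collar strips. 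Setting $\eps_n^- := \eps_n - 2C\delta_n$, with $C$ the constant from \labelcref{eq:prop_transport_map}, a direct triangle-inequality computation shows that $y\notin\tilde A_n$ together with $\dist(y,\tilde A_n)<\eps_n^-$ forces $T_n(y)\in S_n^0$, and symmetrically that $y\in\tilde A_n$ with $\dist(y,\tilde A_n^c)<\eps_n^-$ forces $T_n(y)\in S_n^1$. Using that $\tilde A_n$ and $\tilde A_n^1$ coincide up to an $\L^d$-null set (so replacing one by the other leaves the continuum perimeter unchanged), this yields
\begin{align*}
    E_n(A_n) \geq \frac{\eps_n^-}{\eps_n}\,\Per_{\eps_n^-}(\tilde A_n;\rho).
\end{align*}

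To conclude, since $\delta_n/\eps_n\to 0$ almost surely we have $\eps_n^-\to 0$ and $\eps_n^-/\eps_n\to 1$. If $\liminf_{n\to\infty} E_n(A_n)=\infty$ the inequality is trivial; otherwise, extracting a subsequence realizing the $\liminf$ and combining the previous inequality with \cref{thm:liminf2} (applied with parameters $\eps_n^-$ and sets $\tilde A_n$, for which $\chi_{\tilde A_n}\to\chi_A$ in $L^1(\Omega)$) delivers $\liminf_{n\to\infty} E_n(A_n)\geq \Per(A;\rho)$, as desired. The main technical obstacle is the careful bookkeeping of the strip deflation $\eps_n\mapsto\eps_n^-$ and the verification that the $T_n$-pushforward of the continuum $\eps_n^-$-strip of $\tilde A_n$ is contained in the discrete $\eps_n$-strip of $A_n$; once this is in place, the compactness statement of \cref{thm:gamma_graph} also follows for free, since the bound $\Per_{\eps_n^-}(\tilde A_n;\rho)\leq(\eps_n/\eps_n^-)E_n(A_n)$ combined with \cref{thm:gammaCompact} provides precompactness of $(\chi_{\tilde A_n})$ in $L^1(\Omega)$, and hence of $(\chi_{A_n},\nu_n)$ in $TL^1(\Omega)$.
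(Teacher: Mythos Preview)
Your proposal is correct and follows essentially the same approach as the paper: transport the discrete functional to the continuum via $T_n$, deflate the radius by twice the transport error, bound $E_n(A_n)$ below by $(\eps_n^-/\eps_n)\Per_{\eps_n^-}(\tilde A_n;\rho)$, and then invoke \cref{thm:liminf2}. The only cosmetic differences are that the paper writes the argument in the $\esssup/\essinf$ integral formulation (citing an external lemma to pass from $\max_{x\in X_n}$ to $\esssup_{x\in\Omega}$) rather than in your strip-set language, and uses $\tilde\eps_n=\eps_n-2\|T_n-\operatorname{id}\|_{L^\infty(\Omega)}$ in place of your $\eps_n^-=\eps_n-2C\delta_n$; both choices yield $\tilde\eps_n/\eps_n\to 1$ almost surely.
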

\begin{proof} 
We can perform a change of variables and use \cite[Lemma 2]{roith2022continuum} to obtain
\begin{align*}
    E_n(A_n)
    &:=
    \frac{1}{\eps_n}
    \int_\Omega 
    \left(\max_{x \in X_n}\ksr{\phi}\left({\abs{x-y}}/{\eps_n}\right)\chi_{A_n}(x) - \chi_{A_n}(y)\right)
    \de\nu_n^0(y)
    \\
    &\qquad
    +
    \frac{1}{\eps_n}
    \int_\Omega 
    \ksr{
    \left(\max_{x \in X_n}\ksr{\phi}\left({\abs{x-y}}/{\eps_n}\right)\chi_{A_n^c}(x) - \chi_{A_n^c}(y)\right)
    }
    \de\nu_n^1(y)
    \\
    &=
    \frac{1}{\eps_n}
    \int_\Omega 
    \left(\esssup_{x \in \Omega}\ksr{\phi}\left({\abs{T_n(x)-T_n(y)}}/{\eps_n}\right)\chi_{A_n}(T_n(x)) - \chi_{A_n}(T_n(y))\right)
    \rho_0(y)\de y
    \\
    &\qquad
    +
    \frac{1}{\eps_n}
    \int_\Omega 
    \ksr{
    \left(\esssup_{x \in \Omega}\ksr{\phi}\left({\abs{T_n(x)-T_n(y)}}/{\eps_n}\right)\chi_{A_n^c}(T_n(x)) - \chi_{A_n^c}(T_n(y))\right)
    }
    \rho_1(y)\de y
\end{align*}
Let us \ksr{consider points $x,y\in \Omega$ with} $\ksr{\phi}\left({\abs{T_n(x)-T_n(y)}}/{\eps_n}\right) = 0$ which is equivalent to $\abs{T_n(x)-T_n(y)}>\eps_n$.
Then it holds that 
\begin{align}
    \nonumber 
    \abs{x-y} 
    &\geq 
    \abs{T_n(x)-T_n(y)}
    -
    \abs{T_n(x)-x}
    -
    \abs{T_n(y)-y}
    \\
    \nonumber 
    &\geq 
    \eps_n - 2 \norm{T_n-\operatorname{id}}_{L^\infty(\Omega)}
    \\
    \label{eq:def_tilde_eps}
    &=
    \eps_n\left(1-2\frac{\norm{T_n-\operatorname{id}}_{L^\infty(\Omega)}}{\eps_n}\right)
    =:
    \Tilde{\eps}_n
\end{align}
\ksr{which is equivalent to $\phi\left(\abs{x-y}/\tilde\eps_n\right)=0$.}
Hence, \ksr{we have proved that $\phi\left(\abs{T_n(x)-T_n(y)}/\eps_n\right)\geq \phi\left(\abs{x-y}/\tilde\eps_n\right)$ holds for all $x,y\in\Omega$ and consequently} we obtain
\begin{align*}
    E_n(A_n)
    &\geq 
    \frac{1}{\eps_n}
    \int_\Omega 
    \left(\esssup_{x \in \Omega}\ksr{\phi}\left({\abs{x-y}}/{\tilde\eps_n}\right)\chi_{A_n}(T_n(x)) - \chi_{A_n}(T_n(y))\right)
    \rho_0(y)\de y
    \\
    &\qquad
    +
    \frac{1}{\eps_n}
    \int_\Omega 
    \ksr{
    \left(\esssup_{x \in \Omega}\ksr{\phi}\left({\abs{x-y}}/{\tilde\eps_n}\right)\chi_{A_n^c}(T_n(x)) - \chi_{A_n^c}(T_n(y))\right)
    }
    \rho_1(y)\de y
    \\
    &=
    \frac{\tilde\eps_n}{\eps_n}\Per_{\tilde\eps_n}(\hat{A}_n;\bm{\rho})
\end{align*}   
where $\hat{A}_n = \{\chi_{A_n} \circ T_n=1\}$.
This inequality together with \cref{thm:gammaCompact} establishes the compactness property in \cref{thm:gamma_graph}.
By assumption we have
\begin{align*}
    \limsup_{n\to\infty}
    \frac{\norm{T_n-\operatorname{id}}_{L^\infty(\Omega)}}{\eps_n}
    \leq 
    \limsup_{n\to\infty}
    \frac{\norm{T_n-\operatorname{id}}_{L^\infty(\Omega)}}{\delta_n}\frac{\delta_n}{\eps_n}
    =0\qquad\text{almost surely,}
\end{align*}
and hence $\frac{\Tilde{\eps}_n}{\eps_n}\to 1$ and $\Tilde{\eps}_n\to 0$ \ksr{by definition of $\Tilde{\eps}_n$ in \labelcref{eq:def_tilde_eps}}.
Furthermore, by assumption we have $\chi_{\hat A_n}=\chi_{A_n}\circ T_n\to\chi_A$ as $n\to\infty$.
Hence, we can take the limes inferior and use \cref{thm:liminf2} to get
\begin{align*}
    \liminf_{n\to\infty}
    E_n(A_n)
    \geq 
    \Per(A;\bm{\rho})\qquad\text{almost surely.}
\end{align*}
\end{proof}

\begin{lemma}[Discrete limsup inequality]\label{lem:graph_limsup}
Under the conditions of \cref{thm:gamma_graph} for any measurable $A\subset\R^d$ with probability one there exists a sequence of sets $(A_n)\subset X_n$ with $(\chi_{A_n},\nu_n)\to (\chi_A,\rho)$ in $TL^1(\Omega)$ and 
\begin{align*}
    \limsup_{n\to \infty} E_n(A_n) \leq \Per(A;\bm{\rho}).
\end{align*}
\end{lemma}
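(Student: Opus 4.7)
The plan is to transfer the continuum recovery sequence from Corollary~\ref{cor:nearOptimal} to the graph through the transport maps $T_n$, and then diagonalize. We may assume $\Per(A;\rho)<\infty$ since otherwise the conclusion is trivial. For each $k\in\mathbb{N}$ I apply Corollary~\ref{cor:nearOptimal} with $\eta=1/k$ to obtain a set $A^{(k)}$ whose boundary is $(d-1)$-rectifiable and smooth outside a finite union of $(d-2)$-dimensional manifolds, with $\|\chi_{A^{(k)}}-\chi_A\|_{L^1(\Omega)}\leq 1/k$ and $\limsup_{\eps\to 0}\Per_\eps(A^{(k)};\rho)\leq\Per(A;\rho)+1/k$. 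The candidate graph set is $A_n^{(k)}:=A^{(k)}\cap X_n$.

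The $TL^1$ convergence $(\chi_{A_n^{(k)}},\nu_n)\to(\chi_{A^{(k)}},\rho)$ as $n\to\infty$ is equivalent to $\chi_{A^{(k)}}\circ T_n\to\chi_{A^{(k)}}$ in $L^1(\Omega)$; this follows because the set where these two functions disagree is contained in a $\|T_n-\operatorname{id}\|_{L^\infty}$-neighborhood of $\partial A^{(k)}$, whose Lebesgue measure is of order $\|T_n-\operatorname{id}\|_{L^\infty}$ by rectifiability and Proposition~\ref{prop:minkowskiContent}, and $\|T_n-\operatorname{id}\|_{L^\infty}\to 0$ almost surely.

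For the energy comparison I reverse the change-of-variables computation of Lemma~\ref{lem:graph_liminf}. Using $\nu_n^i=(T_n)_\sharp\rho_i$ together with the inclusion $\{z:|T_n(z)-T_n(y)|<\eps_n\}\subset\{z:|z-y|<\eps_n^+\}$ with $\eps_n^+:=\eps_n+2\|T_n-\operatorname{id}\|_{L^\infty}$, one bounds the discrete essential supremum in $E_n$ by its continuum counterpart on a slightly larger ball (and dually for the infimum), yielding
\begin{align*}
E_n(A_n^{(k)})\leq\frac{\eps_n^+}{\eps_n}\,\Per_{\eps_n^+}\bigl(T_n^{-1}(A^{(k)});\rho\bigr).
\end{align*}
A second comparison, exploiting that $T_n^{-1}(A^{(k)})\triangle A^{(k)}$ again lies in a $\|T_n-\operatorname{id}\|_{L^\infty}$-neighborhood of $\partial A^{(k)}$ together with the boundedness of $\rho_0$ and $\rho_1$, absorbs this discrepancy into $\Per_{\eps_n''}(A^{(k)};\rho)$ with $\eps_n'':=\eps_n^++\|T_n-\operatorname{id}\|_{L^\infty}$, at the cost of an additive error of order $\|T_n-\operatorname{id}\|_{L^\infty}/\eps_n$. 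The hypothesis $\delta_n/\eps_n\to 0$ forces both $\eps_n''/\eps_n\to 1$ and the error term to vanish almost surely.

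Taking the limit superior and invoking Corollary~\ref{cor:nearOptimal}, we obtain $\limsup_{n\to\infty}E_n(A_n^{(k)})\leq\Per(A;\rho)+1/k$ almost surely. A standard diagonal extraction then produces $k_n\to\infty$ such that $A_n:=A_n^{(k_n)}$ satisfies both $(\chi_{A_n},\nu_n)\to(\chi_A,\rho)$ in $TL^1$ and the asserted $\limsup$ bound. The principal technical difficulty is the two-layer energy comparison---simultaneously passing from the discrete $\eps_n$-ball around $T_n(y)$ to the continuum $\eps_n^+$-ball around $y$ and from $T_n^{-1}(A^{(k)})$ back to $A^{(k)}$---where the scaling $\delta_n/\eps_n\to 0$ is precisely what ensures both correction terms are absorbed into the continuum perimeter of $A^{(k)}$ at a slightly inflated radius.
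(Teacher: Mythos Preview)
Your proposal is correct and follows essentially the same route as the paper: approximate $A$ by a regular set via Corollary~\ref{cor:nearOptimal}, restrict to the graph, change variables through $T_n$, enlarge the ball radius by $2\|T_n-\operatorname{id}\|_{L^\infty}$ via the triangle inequality, and control the discrepancy between $T_n^{-1}(A^{(k)})$ and $A^{(k)}$ using that their symmetric difference lies in a thin tube around $\partial A^{(k)}$ whose measure is governed by Proposition~\ref{prop:minkowskiContent}. The only organizational difference is that you factor the argument through the intermediate quantity $\Per_{\eps_n^+}(T_n^{-1}(A^{(k)});\rho)$ and then enlarge the radius once more to $\eps_n''$, whereas the paper keeps a single inflated radius $\tilde\eps_n$ throughout and directly estimates the $L^1$ differences $\|\chi_{\hat B_n}-\chi_B\|$ and $\|\chi_{\hat B_n^{\oplus\tilde\eps_n}}-\chi_{B^{\oplus\tilde\eps_n}}\|$; both bookkeeping choices lead to the same error terms of order $\|T_n-\operatorname{id}\|_{L^\infty}/\eps_n$ multiplied by $\H^{d-1}(\partial A^{(k)}\cap\Omega)$.
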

\begin{proof}
We can assume that $\Per(A;\bm{\rho})<\infty$.
Using \cref{cor:nearOptimal}, \ksr{for all $\eta>0$} there exists a set $A_\eta$ such that $\L^d(A\triangle A_\eta)\leq\eta$ and $\limsup_{n\to\infty}\Per_{\eps_n}(A_\eta;\bm{\rho})\leq\Per(A;\bm{\rho})+\eta$ for all sequences $\eps_n$ which converge to zero as $n\to\infty$.
We will abbreviate $\ksr{\Tilde{A}} := A_\eta$. Given the conditions of \cref{cor:nearOptimal}, we have that
\begin{equation} \label{eqn:minkCovnerges}
    \lim_{\epsilon \to 0}\mathcal{M}_\epsilon(\partial \Tilde{A} \cap \Omega) = \H^{d-1}(\partial \Tilde{A} \cap \Omega)
\end{equation}
by \cref{prop:minkowskiContent}.

The sequence $\ksr{A_n} := \tilde{A}\cap X_n$ then satisfies $(\chi_{A_n},\nu_n)\to (\chi_{\tilde{A}},\rho)$ in $TL^1(\Omega)$ and furthermore
\begin{align*}
    E_n(A_n)
    &=
    \frac{1}{\eps_n}
    \int_\Omega 
    \left(\max_{x \in X_n}\ksr{\phi}\left({\abs{x-y}}/{\eps_n}\right)\chi_{A_n}(x) - \chi_{A_n}(y)\right)
    \de\nu_n^0(y)
    \\
    &\qquad
    +
    \frac{1}{\eps_n}
    \int_\Omega 
    \ksr{
    \left(\max_{x \in X_n}\ksr{\phi}\left({\abs{x-y}}/{\eps_n}\right)\chi_{A_n^c}(x) - \chi_{A_n^c}(y)\right)
    }
    \de\nu_n^1(y)
    \\
    &=
    \frac{1}{\eps_n}
    \int_\Omega 
    \left(\esssup_{x \in \Omega}\ksr{\phi}\left({\abs{T_n(x)-T_n(y)}}/{\eps_n}\right)(\chi_{\Tilde{A}}\circ T_n)(x) - (\chi_{\Tilde{A}}\circ T_n)(y)\right)
    \rho_0(y) \de y
    \\
    &\qquad
    +
    \frac{1}{\eps_n}
    \int_\Omega 
    \ksr{
    \left(\esssup_{x \in \Omega}\ksr{\phi}\left({\abs{T_n(x)-T_n(y)}}/{\eps_n}\right)(\chi_{\Tilde{A}^c}\circ T_n)(x) - (\chi_{\Tilde{A}^c}\circ T_n)(y)\right)
    }
    \rho_1(y) \de y,
\end{align*}
\ksr{where we used a change of variables and utilized that by definition of $A_n$ it holds $\chi_{A_n}\circ T_n = \chi_{\Tilde{A}}\circ T_n$.}
Let us define a new scaling as $\Tilde{\eps}_n := \eps_n\left(1+ 2\norm{T_n-\operatorname{id}}_{L^\infty(\Omega)}/\eps_n\right)$.
\ksr{Analogously to the proof of \cref{lem:graph_liminf}, one obtains $\phi(\abs{T_n(x)-T_n(y)}/\eps_n)\leq \phi(\abs{x-y}/\Tilde{\eps}_n)$ for all $x,y\in\Omega$.}
This implies
\begin{align*}
    E_n(A_n)
    &\leq 
    \frac{1}{\eps_n}
    \int_\Omega
    \left(
    \esssup_{x \in \Omega}\phi(\abs{x-y}/\Tilde{\eps}_n)(\chi_{\Tilde{A}}\circ T_n)(x) - (\chi_{\Tilde{A}}\circ T_n)(y)\right)
    \rho_0(y) \de y
    \\
    &\qquad
    +
    \frac{1}{\eps_n}
    \int_\Omega
    \ksr{
    \left(
    \esssup_{x \in \Omega}\phi(\abs{x-y}/\Tilde{\eps}_n)(\chi_{\Tilde{A}^c}\circ T_n)(x) - (\chi_{\Tilde{A}^c}\circ T_n)(y)\right)
    }
    \rho_1(y) \de y
    \\
    &=
    \ksr{
    \frac{1}{\eps_n}
    \int_\Omega
    \left(
    \esssup_{x \in B(y,\Tilde{\eps}_n)\cap \Omega}(\chi_{\Tilde{A}}\circ T_n)(x) - (\chi_{\Tilde{A}}\circ T_n)(y)\right)
    \rho_0(y) \de y}
    \\
    &\qquad
    \ksr{
    +
    \frac{1}{\eps_n}
    \int_\Omega
    \left(
    \esssup_{x \in B(y,\Tilde{\eps}_n)\cap \Omega}(\chi_{\Tilde{A}^c}\circ T_n)(x) - (\chi_{\Tilde{A}^c}\circ T_n)(y)\right)
    \rho_1(y) \de y.}
\end{align*}    
\ksr{In the remainder of the proof we argue that we can replace every occurrency of $\chi_{\Tilde{A}}\circ T_n$ by $\chi_{\Tilde{A}}$ (and similar for $\Tilde{A}^c$) in the limit $n\to\infty$.
For this we first estimate the terms without essential suprema and then the terms with essential suprema.}

Defining the set $\hat{A}_n := \{\chi_{\Tilde{A}}\circ T_n=1\}$ we have $\chi_{\Tilde{A}}\circ T_n = \chi_{\hat{A}_n}$.
Furthermore, for every $x\in \Tilde{A}$ with $\dist(x,\Tilde{A}^c)>\norm{T_n-\operatorname{id}}_{L^\infty(\Omega)}$ we can find $\hat{x}\in X_n$ with $\abs{x-\hat{x}}\leq\norm{T_n-\operatorname{id}}_{L^\infty(\Omega)}$ and hence $\hat{x}\in \Tilde{A}$.
This implies $x\in \hat{A}_n$.
Similarly, one argues that $x\in \Tilde{A}^c$ with $\dist(x,\Tilde{A})>\norm{T_n-\operatorname{id}}_{L^\infty(\Omega)}$ implies $x\in \hat{A}_n^c$.
Hence, we obtain
\begin{align*}
    \int_\Omega\abs{\chi_{\Tilde{A}} \circ T_n(y) - \chi_{\Tilde{A}}(y)}\de y
    &=
    \int_\Omega\abs{\chi_{\hat{A}_n}(y) - \chi_{\Tilde{A}}(y)}\de y
    =
    \mathcal{L}^d\left(\hat{A}_n \triangle \Tilde{A}\right)
    \\
    &\leq 
    \mathcal{L}^d\left(\{x\in\Omega\st\dist(x,\partial \Tilde{A})\leq\norm{T_n-\operatorname{id}}_{L^\infty(\Omega)}\}\right)
    \\
    &\leq 
    (2+o(1))\H^{d-1}(\partial \Tilde{A} \cap \Omega)\norm{T_n-\operatorname{id}}_{L^\infty(\Omega)}
\end{align*}
where $o(1)\to 0$ for $n\to\infty$ comes from \labelcref{eqn:minkCovnerges}.

Using the notation $\Tilde{A}^{\oplus\eps} := \bigcup_{x\in\Tilde{A}}B(x,\eps)$, we get
\begin{align*}
    \int_\Omega
    \abs{
    \esssup_{x \in B(y,\tilde\eps_n)\ksr{\cap\Omega}}(\chi_{\Tilde{A}}\circ T_n)(x)
    -
    \esssup_{x \in B(y,\tilde\eps_n)\ksr{\cap\Omega}} \chi_{\Tilde{A}}(x)
    }
    \de y
    &\leq 
    \L^d\left(
    (\hat{A}_n)^{\oplus\tilde\eps_n} \triangle \Tilde{A}^{\oplus\tilde\eps_n}
    \right).
\end{align*}
We argue similarly to the previous case: 
Let us take $x \in (\Tilde{A}^{\oplus\tilde\eps_n})^c$ with $\dist(x,\Tilde{A}^{\oplus\tilde\eps_n})>2\norm{T_n-\operatorname{id}}_{L^\infty(\Omega)}$ and assume that $x\in(\hat{A}_n)^{\oplus\tilde\eps_n}$.
The former implies $\dist(x,\Tilde{A})>\tilde\eps_n + 2\norm{T_n-\operatorname{id}}_{L^\infty(\Omega)}$.
In contrast, the latter implies the existence of some $y\in\hat{A}_n$ with $\abs{x-y}<\tilde\eps_n$ and by definition of the set $\hat{A}_n$ we have $T_n(y)\in \tilde{A}$.
In particular, we obtain
\begin{align*}
    \abs{x-T_n(y)} \leq \abs{x-y} + \abs{y-T_n(y)} < \tilde\eps_n + \norm{T_n-\operatorname{id}}_{L^\infty(\Omega)}
\end{align*}
which is a contradiction.
We can argue symmetrically for points that lie within $\Tilde{A}^{\oplus\tilde\eps_n}$ with distance to the complement larger than $2\norm{T_n-\operatorname{id}}_{L^\infty(\Omega)}$. Using this we may estimate the symmetric difference of these sets by
\begin{align*}
    \L^d\left(
    (\hat{A}_n)^{\oplus\tilde\eps_n} \triangle \Tilde{A}^{\oplus\tilde\eps_n}
    \right)
    &\leq 
    \mathcal{L}^d\left(\{x\in\Omega\st\dist(x,\partial (\Tilde{A}^{\oplus\tilde\eps_n}))\leq 2\norm{T_n-\operatorname{id}}_{L^\infty(\Omega)}\}\right)
    \\
    &\leq 
    \mathcal{L}^d\left(\{x\in\Omega\st\dist(x,\partial \Tilde{A})\leq \tilde\eps_n + 2\norm{T_n-\operatorname{id}}_{L^\infty(\Omega)}\}\right)
    \\
    & \quad - \mathcal{L}^d\left(\{x\in\Omega\st\dist(x,\partial \Tilde{A})\leq \tilde\eps_n - 2\norm{T_n-\operatorname{id}}_{L^\infty(\Omega)}\}\right) 
    \\
    &\leq  (2+o(1))\H^{d-1}(\partial \Tilde{A} \cap \Omega) \left(\tilde \epsilon_n + 2\|T_n - \operatorname{id}\|_{L^\infty(\Omega)}\right)  \\
    & \quad - (2-o(1))\H^{d-1}(\partial \Tilde{A} \cap \Omega) \left(\tilde \epsilon_n - 2\|T_n - \operatorname{id}\|_{L^\infty(\Omega)}\right)
    \\
    &\leq 
    8(1+o(1))\H^{d-1}(\partial \Tilde{A})\norm{T_n-\operatorname{id}}_{L^\infty(\Omega)} + 4 o(1) \H^{d-1}(\partial \Tilde{A} \cap \Omega)  \tilde \epsilon_n,
\end{align*}
where $o(1)\to 0$ as $n\to\infty$ comes from the limit \labelcref{eqn:minkCovnerges}.

Combining the estimates we obtain
\begin{align*}
    &\phantom{{}={}}
    \frac{1}{\eps_n}
    \abs{
    \int_\Omega
    \left(
    \esssup_{x \in B(y,\tilde\eps_n)}(\chi_{\Tilde{A}}\circ T_n)(x) - (\chi_{\Tilde{A}}\circ T_n)(y)\right)
    \rho_0(y) \de y
    -
    \int_\Omega
    \left(
    \esssup_{x \in B(y,\tilde\eps_n)}\chi_{\Tilde{A}}(x) - \chi_{\Tilde{A}}(y)\right)
    \rho_0(y) \de y
    }
    \\
    &\leq 
    \frac{1}{\eps_n}
    \int_\Omega \left( 
    \abs{
    \esssup_{x \in B(y,\tilde\eps_n)}(\chi_{\Tilde{A}}\circ T_n)(x)
    -
    \esssup_{x \in B(y,\tilde\eps_n)} \chi_{\Tilde{A}}(x)
    }
    +
    \abs{
    (\chi_{\Tilde{A}}\circ T_n)(y)
    -
    \chi_{\Tilde{A}}(y)
    }
    \rho_0(y) \right)
    \de y
    \\
    &\leq 
    \norm{\rho_0}_{L^\infty(\Omega)}
    \H^{d-1}(\partial \Tilde{A} \cap \Omega)\left(
    \frac{8(1+o(1))\norm{T_n^0-\operatorname{id}}_{L^\infty(\Omega)}}{\eps_n} + 4o(1) \frac{\tilde \epsilon_n}{\epsilon_n} + (2+o(1))\frac{\norm{T_n^0-\operatorname{id}}_{L^\infty(\Omega)}}{\eps_n}\right)
\end{align*}
which converges to zero almost surely, as $n\to\infty$.
Similarly, we can argue for the term containing the complements $\Tilde{A}^c$ and in total we find, using $\Tilde{A}=A_\eta$, that
\begin{align*}
    \limsup_{n\to\infty}E_n(A_n)
    \leq 
    \limsup_{n\to\infty}
    \frac{\tilde\eps_n}{\eps_n}
    \Per_{\tilde\eps_n}(A_\eta;\bm{\rho})
    \leq 
    \Per(A;\bm{\rho}) + \eta.
\end{align*}
Finally, we perform a diagonalization argument over the sequence $A_{n,\eta} := A_\eta \cap X_n$ to obtain a recovery sequence $A_n$, converging to $A$ in $TL^1(\Omega)$ and satisfying
\begin{align*}
    \limsup_{n\to\infty}E_n(A_n)
    \leq 
    \Per(A;\bm{\rho}).
\end{align*}
\end{proof}


\section*{Acknowledgments}

Both authors acknowledge funding by the Deutsche Forschungsgemeinschaft (DFG, German Research Foundation) under Germany's Excellence Strategy - GZ 2047/1, Projekt-ID 390685813. 
Part of this work was done while LB was in residence at Institut Mittag-Leffler in Djursholm, Sweden during the semester on \textit{Geometric Aspects of Nonlinear Partial Differential Equations} in 2022, supported by the Swedish Research Council under grant no. 2016-06596.
Most of this work was done while LB was affiliated with the Hausdorff Center for Mathematics at the University of Bonn.

%
%
%
%

\printbibliography

@book{braidesGamma2007,
  title={{G}amma-convergence for beginners},
  author={Braides, Andrea},
  year={2007},
  publisher={Oxford Academic}
}

@article {ambrosio-tortorelli-1990,
    AUTHOR = {Ambrosio, L. and Tortorelli, V. M.},
     TITLE = {Approximation of functionals depending on jumps by elliptic
              functionals via {$\Gamma$}-convergence},
   JOURNAL = {Comm. Pure Appl. Math.},
  FJOURNAL = {Comm. Pure Appl. Math.},
    VOLUME = {43},
      YEAR = {1990},
    NUMBER = {8},
     PAGES = {999--1036},
      ISSN = {0010-3640},
       DOI = {10.1002/cpa.3160430805},
       URL = {https://doi.org/10.1002/cpa.3160430805},
}

@article{ModicaMortola,
	Author = {Modica, Luciano and Mortola, S.},
	Date-Added = {2015-01-31 10:59:58 -0500},
	Date-Modified = {2015-02-04 20:48:58 -0500},
	Journal = {Boll. Un. Mat. Ital. B (5)},
	Mrclass = {49A50},
	Mrreviewer = {Francesco Barbieri},
	Number = {1},
	Pages = {285--299},
	Title = {Un esempio di {$\Gamma$}-convergenza},
	Volume = {14},
	Year = {1977}}

@article{Modica87,
	Author = {Modica, Luciano},
	Coden = {AVRMAW},
	Date-Added = {2014-03-05 12:13:58 +0100},
	Date-Modified = {2014-03-05 12:14:03 +0100},
	Doi = {10.1007/BF00251230},
	Fjournal = {Arch. Rational Mech. Anal.},
	Issn = {0003-9527},
	Journal = {Arch. Rational Mech. Anal.},
	Mrreviewer = {L. Hsiao},
	Number = {2},
	Pages = {123--142},
	Title = {The gradient theory of phase transitions and the minimal interface criterion},
	Url = {http://dx.doi.org/10.1007/BF00251230},
	Volume = {98},
	Year = {1987},
	Bdsk-Url-1 = {http://dx.doi.org/10.1007/BF00251230}}

@article{FJMrigid,
author = {Friesecke, Gero and James, Richard D. and M{\"u}ller, Stefan},
title = {A theorem on geometric rigidity and the derivation of nonlinear plate theory from three-dimensional elasticity},
journal = {Comm. Pure Appl. Math.},
volume = {55},
number = {11},
pages = {1461-1506},
doi = {10.1002/cpa.10048},
url = {https://onlinelibrary.wiley.com/doi/abs/10.1002/cpa.10048},
year = {2002}
}

@article{ContiFonsecaLeoni-gammConv2grad,
author = {Conti, Sergio and Fonseca, Irene and Leoni, Giovanni},
year = {2001},
month = {07},
pages = {},
title = {A {G}amma-convergence result for the two gradient theory of phase transitions},
volume = {55},
journal = {Comm. Pure Appl. Math.},
doi = {10.1002/cpa.10035.abs}
}

@book{braidesFreeDiscont,
  title={Approximation of Free Discontinuity Problems},
  author={Braides, Andrea},
  year={1998},
  publisher={Springer}
}

@book{fonseca2007modern,
	title={Modern Methods in the Calculus of Variations: {$L^p$} Spaces},
	author={Fonseca, Irene and Leoni, Giovanni},
	year={2007},
	publisher={Springer Science \& Business Media}
}

@book{EvansGariepy,
	Author = {Evans, Lawrence and Gariepy, Ronald},
	Date-Added = {2014-09-01 10:42:31 -0400},
	Date-Modified = {2015-02-18 11:32:41 -0500},
	Isbn = {0-8493-7157-0},
	Mrclass = {28-02 (26-02 26Bxx 46E35)},
	Mrreviewer = {R. G. Bartle},
	Pages = {viii+268},
	Publisher = {CRC Press, Boca Raton, FL},
	Series = {Studies in Advanced Mathematics},
	Title = {Measure theory and fine properties of functions},
	Year = {1992}}

@article{fonsecaLiu2017,
author = {Fonseca, Irene and Liu, Pan},
title = {The Weighted {Ambrosio--Tortorelli} Approximation Scheme},
journal = {SIAM Journal on Mathematical Analysis},
volume = {49},
number = {6},
pages = {4491-4520},
year = {2017},
doi = {10.1137/16M1088661}
}

@article{dePhilippis2017,
author = {Guido {De Philippis} and Nicola Fusco and Aldo Pratelli},
title = {On the approximation of {SBV} functions},
journal = {Atti Accad. Naz. Lincei Cl. Sci. Fis. Mat. Natur.},
volume = {28},
number = {2},
pages = {369-413},
year = {2017}
}

@book{DalMasoBook, 
    title={An Introduction to $\Gamma$-Convergence}, 
    url={http://dx.doi.org/10.1007/978-1-4612-0327-8}, 
    DOI={10.1007/978-1-4612-0327-8}, 
    Series = {Progress in Nonlinear Differential Equations and their Applications, 8},
    publisher={Birkhäuser Boston}, 
    author={Dal Maso, Gianni}, 
    year={1993} 
}

@book{goodfellow2016deep,
  title={Deep learning},
  author={Goodfellow, Ian and Bengio, Yoshua and Courville, Aaron},
  year={2016},
  publisher={MIT press}
}

@article{GarcSlep16,
	author = {Garc\'{i}a Trillos, Nicol\'{a}s  and Slep\v{c}ev, Dejan and Von Brecht, James and Laurent, Thomas and Bresson, Xavier},
	title = {Consistency of Cheeger and Ratio Graph Cuts},
	year = {2016},
	publisher = {JMLR.org},
	volume = {17},
	number = {1},
	issn = {1532-4435},
	journal = {J. Mach. Learn. Res.},
	pages = {6268–6313},
	numpages = {46}
}

@article{GarcSlep15,
	title={Continuum Limit of Total Variation on Point Clouds},
	volume={220},
	ISSN={1432-0673},
	url={http://dx.doi.org/10.1007/s00205-015-0929-z},
	DOI={10.1007/s00205-015-0929-z},
	number={1},
	journal={Archive for Rational Mechanics and Analysis},
	publisher={Springer Science and Business Media LLC},
	author={Garc\'ia Trillos, Nicol\'as and Slep\v{c}ev, Dejan},
	year={2015},
	pages={193–241}
}

@article{spector2011simple,
  title={Simple proofs of some results of Reshetnyak},
  author={Spector, Daniel},
  journal={Proceedings of the American Mathematical Society},
  pages={1681--1690},
  year={2011},
  publisher={JSTOR}
}

@book{ambrosio2000functions,
  title={Functions of bounded variation and free discontinuity problems},
  author={Ambrosio, Luigi and Fusco, Nicola and Pallara, Diego},
  year={2000},
  publisher={Courier Corporation}
}

@article{chambolle2014remark,
  title={A remark on the anisotropic outer Minkowski content},
  author={Chambolle, Antonin and Lisini, Stefano and Lussardi, Luca},
  journal={Advances in Calculus of Variations},
  volume={7},
  number={2},
  pages={241--266},
  year={2014},
  publisher={De Gruyter}
}

@article{trillos2015rate,
  title={On the rate of convergence of empirical measures in $\infty$-transportation distance},
  author={Garc\'ia Trillos, Nicol{\'a}s  and Slep{\v{c}}ev, Dejan},
  journal={Canadian Journal of Mathematics},
  volume={67},
  number={6},
  pages={1358--1383},
  year={2015},
  publisher={Cambridge University Press}
}

@article{chambolle2010continuous,
  title={Continuous limits of discrete perimeters},
  author={Chambolle, Antonin and Giacomini, Alessandro and Lussardi, Luca},
  journal={ESAIM: Mathematical Modelling and Numerical Analysis},
  volume={44},
  number={2},
  pages={207--230},
  year={2010},
  publisher={EDP Sciences}
}

@article{roith2022continuum,
  title={Continuum limit of Lipschitz learning on graphs},
  author={Roith, Tim and Bungert, Leon},
  journal={Foundations of Computational Mathematics},
  volume={23},
  number={2},
  pages={393--431},
  year={2023},
  publisher={Springer}
}

@article{bungert2022geometry,
  title={The geometry of adversarial training in binary classification},
  author={Bungert, Leon and Garc{\'i}a Trillos, Nicol{\'a}s and Murray, Ryan},
  journal={Information and Inference: A Journal of the IMA},
  volume={12},
  number={2},
  pages={921--968},
  year={2023},
  publisher={Oxford University Press}
}

@incollection{burger2023variational,
    author={Burger, Martin}, 
    title={Variational regularization in inverse problems and machine learning}, 
    editor = {Ademir Hujdurović and Klavdija Kutnar and Dragan Marušič and Štefko Miklavič and Tomaž Pisanski and Primož Šparl},
    booktitle = {European Congress of Mathematics},
    year={2023}, 
    pages={253–275} 
}

@misc{belenkin2022note,
      title={A note on $\Gamma$-convergence of Tikhonov functionals for nonlinear inverse problems}, 
      author={Alexey Belenkin and Michael Hartz and Thomas Schuster},
      year={2022},
      eprint={2208.05780},
      archivePrefix={arXiv},
      primaryClass={math.FA}
}

@article{trillos2022adversarial,
  title={Adversarial classification: Necessary conditions and geometric flows},
  author={Garc{\'i}a Trillos, Nicol{\'a}s and Murray, Ryan},
  journal={Journal of Machine Learning Research},
  volume={23},
  number={187},
  pages={1--38},
  year={2022}
}

@article{trillos2022multimarginal,
  author  = {Garc\'ia Trillos, Nicol\'as and Jacobs, Matt and Kim, Jakwang},
  title   = {The multimarginal optimal transport formulation of adversarial multiclass classification},
  journal = {Journal of Machine Learning Research},
  year    = {2023},
  volume  = {24},
  number  = {45},
  pages   = {1--56}
}

@article{garcia2022regularized,
  title={On the regularized risk of distributionally robust learning over deep neural networks},
  author={Garc{\'i}a Trillos, Camilo Andr{\'e}s and Garc{\'i}a Trillos, Nicol{\'a}s},
  journal={Research in the Mathematical Sciences},
  volume={9},
  number={3},
  pages={1--32},
  year={2022},
  publisher={Springer}
}

@inproceedings{madry2017towards,
  author       = {Aleksander Madry and
                  Aleksandar Makelov and
                  Ludwig Schmidt and
                  Dimitris Tsipras and
                  Adrian Vladu},
  title        = {Towards Deep Learning Models Resistant to Adversarial Attacks},
  booktitle    = {6th International Conference on Learning Representations, {ICLR} 2018,
                  Vancouver, BC, Canada, April 30 - May 3, 2018, Conference Track Proceedings},
  year         = {2018},
  url          = {https://openreview.net/forum?id=rJzIBfZAb}
}

@article{awasthi2021existence,
  title={On the existence of the adversarial Bayes classifier},
  author={Awasthi, Pranjal and Frank, Natalie and Mohri, Mehryar},
  journal={Advances in Neural Information Processing Systems},
  volume={34},
  pages={2978--2990},
  year={2021}
}

@article{awasthi2021calibration,
  title={Calibration and consistency of adversarial surrogate losses},
  author={Awasthi, Pranjal and Frank, Natalie and Mao, Anqi and Mohri, Mehryar and Zhong, Yutao},
  journal={Advances in Neural Information Processing Systems},
  volume={34},
  pages={9804--9815},
  year={2021}
}

@inproceedings{pydi2020adversarial,
  title={Adversarial risk via optimal transport and optimal couplings},
  author={Pydi, Muni Sreenivas and Jog, Varun},
  booktitle={International Conference on Machine Learning},
  pages={7814--7823},
  year={2020},
  organization={PMLR}
}

@article{pydi2021many,
  title={The many faces of adversarial risk},
  author={Pydi, Muni Sreenivas and Jog, Varun},
  journal={Advances in Neural Information Processing Systems},
  volume={34},
  pages={10000--10012},
  year={2021}
}

@article{cesaroni2018minimizers,
  title={Minimizers for nonlocal perimeters of Minkowski type},
  author={Cesaroni, Annalisa and Dipierro, Serena and Novaga, Matteo and Valdinoci, Enrico},
  journal={Calculus of Variations and Partial Differential Equations},
  volume={57},
  number={2},
  pages={1--40},
  year={2018},
  publisher={Springer}
}

@article{cesaroni2017isoperimetric,
  title={Isoperimetric problems for a nonlocal perimeter of Minkowski type},
  author={Cesaroni, Annalisa and Novaga, Matteo},
  journal={Geometric Flows},
  volume={2},
  number={1},
  pages={86--93},
  year={2017},
  publisher={De Gruyter Open Access}
}

@inproceedings{szegedy2013intriguing,
  author       = {Christian Szegedy and
                  Wojciech Zaremba and
                  Ilya Sutskever and
                  Joan Bruna and
                  Dumitru Erhan and
                  Ian J. Goodfellow and
                  Rob Fergus},
  editor       = {Yoshua Bengio and
                  Yann LeCun},
  title        = {Intriguing properties of neural networks},
  booktitle    = {2nd International Conference on Learning Representations, {ICLR} 2014,
                  Banff, AB, Canada, April 14-16, 2014, Conference Track Proceedings},
  year         = {2014}
}

@inproceedings{bai2021recent,
  title     = {Recent Advances in Adversarial Training for Adversarial Robustness},
  author    = {Bai, Tao and Luo, Jinqi and Zhao, Jun and Wen, Bihan and Wang, Qian},
  booktitle = {Proceedings of the Thirtieth International Joint Conference on
               Artificial Intelligence, {IJCAI-21}},
  publisher = {International Joint Conferences on Artificial Intelligence Organization},
  editor    = {Zhi-Hua Zhou},
  pages     = {4312--4321},
  year      = {2021},
  month     = {8},
  note      = {Survey Track},
  doi       = {10.24963/ijcai.2021/591},
  url       = {https://doi.org/10.24963/ijcai.2021/591},
}

@article{bhagoji2019lower,
  title={Lower bounds on adversarial robustness from optimal transport},
  author={Bhagoji, Arjun Nitin and Cullina, Daniel and Mittal, Prateek},
  journal={Advances in Neural Information Processing Systems},
  volume={32},
  year={2019}
}

@article{finlay2021scaleable,
  title={Scaleable input gradient regularization for adversarial robustness},
  author={Finlay, Chris and Oberman, Adam M},
  journal={Machine Learning with Applications},
  volume={3},
  pages={100017},
  year={2021},
  publisher={Elsevier}
}

@article{barchiesi2010variational,
  title={A variational model for infinite perimeter segmentations based on Lipschitz level set functions: denoising while keeping finely oscillatory boundaries},
  author={Barchiesi, Marco and Kang, Sung Ha and Le, Triet M and Morini, Massimiliano and Ponsiglione, Marcello},
  journal={Multiscale Modeling \& Simulation},
  volume={8},
  number={5},
  pages={1715--1741},
  year={2010},
  publisher={SIAM}
}

@article{chambolle2012nonlocal,
  title={A nonlocal mean curvature flow and its semi-implicit time-discrete approximation},
  author={Chambolle, Antonin and Morini, Massimiliano and Ponsiglione, Marcello},
  journal={SIAM Journal on Mathematical Analysis},
  volume={44},
  number={6},
  pages={4048--4077},
  year={2012},
  publisher={SIAM}
}

@article{chambolle2015nonlocal,
  title={Nonlocal curvature flows},
  author={Chambolle, Antonin and Morini, Massimiliano and Ponsiglione, Marcello},
  journal={Archive for Rational Mechanics and Analysis},
  volume={218},
  number={3},
  pages={1263--1329},
  year={2015},
  publisher={Springer}
}

@article{garcia2022graph,
  title={From graph cuts to isoperimetric inequalities: Convergence rates of Cheeger cuts on data clouds},
  author={Garc{\'i}a Trillos, Nicol{\'a}s and Murray, Ryan and Thorpe, Matthew},
  journal={Archive for Rational Mechanics and Analysis},
  volume={244},
  number={3},
  pages={541--598},
  year={2022},
  publisher={Springer}
}

@article{calder2022improved,
  title={Improved spectral convergence rates for graph Laplacians on $\varepsilon$-graphs and k-NN graphs},
  author={Calder, Jeff and Garc\'ia Trillos, Nicol\'as},
  journal={Applied and Computational Harmonic Analysis},
  volume={60},
  pages={123--175},
  year={2022},
  publisher={Elsevier}
}

@article{calder2020rates,
  title={Rates of convergence for Laplacian semi-supervised learning with low labeling rates},
  author={Calder, Jeff and Slep{\v{c}}ev, Dejan and Thorpe, Matthew},
  journal={Research in the Mathematical Sciences},
  volume={10},
  number={1},
  pages={10},
  year={2023},
  publisher={Springer}
}

@article{benning2018modern,
  title={Modern regularization methods for inverse problems},
  author={Benning, Martin and Burger, Martin},
  journal={Acta Numerica},
  volume={27},
  pages={1--111},
  year={2018},
  publisher={Cambridge University Press}
}

\end{document}